\newtheorem{theorem}{Theorem}[section]
\newaliascnt{lem}{theorem}
\newtheorem{lemma}[lem]{Lemma}
\newaliascnt{prop}{theorem}
\newtheorem{prop}[prop]{Proposition}
\newaliascnt{cor}{theorem}
\newaliascnt{rem}{theorem}
\newtheorem{remark}[rem]{Remark}
\newaliascnt{def}{theorem}
\newtheorem{defi}[def]{Definition}
\theoremstyle{definition}
\newaliascnt{ex}{theorem}
\newaliascnt{ass}{theorem}
\newtheorem{ass}[ass]{Assumption}
\renewcommand{\d}{\,\mathrm{d}}											
\renewcommand*{\epsilon}{\varepsilon}                                   
\renewcommand*{\rho}{\varrho}                                   		
\newcommand*{\nach}{\rightarrow}                                        
\newcommand*{\sep}{\; \vrule \;}                                        
\newcommand*{\N}{\mathbb{N}}                                            
\newcommand*{\R}{\mathbb{R}}                                            
\newcommand*{\C}{\mathbb{C}}                                            
\newcommand*{\Z}{\mathbb{Z}}                                            
\newcommand*{\B}{\mathcal{B}}                                           
\newcommand*{\U}{\mathcal{U}}                                         	
\newcommand*{\Co}{\mathcal{C}}                                         	
\newcommand*{\I}{\mathcal{I}}                                         	
\newcommand*{\J}{\mathcal{J}}                                         	
\renewcommand*{\P}{\mathcal{P}}                                         
\newcommand*{\leer}{\emptyset}                                          
\renewcommand*{\l}{\ell} 				                                
\newcommand*{\abs}[1]{\left| #1 \right|}                                
\newcommand*{\norm}[1]{\left\| #1 \right\|}                             
\newcommand*{\normmod}[1]{\left\|\hspace{-1pt}\left| #1 \right|\hspace{-1pt}\right\|}                             
\newcommand*{\link}[1]{(\ref{#1})}                                      
\newcommand*{\distr}[2]{\left\langle #1, #2 \right\rangle}              
\newcommand*{\dist}[2]{\mathrm{dist}\!\left( #1, #2 \right)}            
\renewcommand{\tilde}[1]{ \widetilde{#1} }        						
\DeclareMathOperator{\supp}{supp}										
\DeclareMathOperator{\rad}{rad}											
\DeclareMathOperator{\diam}{diam}										
\DeclareMathOperator{\Id}{Id}											
\DeclareMathOperator{\Ext}{Ext}											
\DeclareMathOperator{\Res}{Res}											
\title{Besov regularity for operator equations on patchwise smooth manifolds \footnote{This work has been supported by Deutsche Forschungsgemeinschaft DFG (DA 360/19-1).}}     
\author{Stephan Dahlke \and Markus Weimar \footnote{Corresponding author.}} 
\begin{document}   
\maketitle

\begin{abstract}
\noindent We study regularity properties of solutions to operator equations on patchwise smooth manifolds~$\partial\Omega$ such as, e.g., boundaries of polyhedral domains $\Omega \subset \R^3$.
Using suitable biorthogonal wavelet bases $\Psi$, we introduce a new class of Besov-type spaces $B_{\Psi,q}^\alpha(L_p(\partial \Omega))$ of functions $u\colon\partial\Omega\nach\C$.
Special attention is paid on the rate of convergence for best $n$--term wavelet approximation to functions in these scales since this determines the performance of adaptive numerical schemes.
We show embeddings of (weighted) Sobolev spaces on $\partial\Omega$ into $B_{\Psi,\tau}^\alpha(L_\tau(\partial \Omega))$, $1/\tau=\alpha/2 + 1/2$, which lead us to regularity assertions for the equations under consideration.
Finally, we apply our results to a boundary integral equation of the second kind which arises from the double layer ansatz for Dirichlet problems for Laplace's equation in $\Omega$.

\smallskip
\noindent \textbf{Keywords:} \textit{Besov spaces, weighted Sobolev spaces, wavelets, adaptive methods, non-linear approximation, integral equations, double layer, regularity, manifolds.}

\smallskip
\noindent \textbf{AMS Subject Classification:} 30H25,  35B65, 42C40, 45E99, 46E35, 47B38, 65T60.
\end{abstract}

\section{Introduction}
In recent years, the adaptive numerical treatment of operator equations has become a field of increasing importance.  Motivated by the enormous increase of computer power, more and more complicated models have been developed, analyzed and simulated.  In practice, this might lead to systems involving hundreds
of thousands or even millions of unknowns.
Therefore, adaptive strategies are very often unavoidable to increase efficiency. In particular,   adaptive methods based on {\em wavelets}  seem to have a lot of potential~\cite{Da97}. Based on the strong analytical properties of wavelets,  in a series of papers \cite{CDD01}, \cite{CDD02}, \cite{DaDaHo+97}
adaptive algorithms have been derived  that are guaranteed to converge for a huge class of operators, including operators of negative order. Moreover, their  convergence order is optimal in the sense that  the algorithms realize  the convergence order of best $n$--term wavelet approximation schemes.   In the meantime, these investigations have been widely generalized, e.g., to wavelet frame algorithms \cite{St03}, \cite{StWe09}, to saddle point problems \cite{DaDaUr02}, and to non-linear equations \cite{CoDaDe03a}, \cite{K11}. 
This list is clearly not complete. 

Although numerical experiments  strongly indicate the potential of adaptive wavelet algorithms \cite{BBC+01}, \cite{W09}, a sound mathematical foundation for the use of adaptivity seems to be more than desirable. To ensure that adaptivity really pays off in practice, the convergence order of adaptive schemes  has to be compared to  the order of  non-adaptive (uniform) schemes. One the one hand, it is well-known that the power of uniform approximation schemes usually depends on the Sobolev smoothness of the object one wants to approximate. On the other hand, best $n$--term 
approximation schemes serve as the benchmark for adaptive strategies, and it is well-known that the approximation order that can be achieved by best $n$--term approximation algorithms  depends on the smoothness in  the specific scale (sometimes called the {\em adaptivity scale})   $B^s_{\tau}(L_{\tau}(\Omega)),~1/\tau=s/d+1/p$, of Besov spaces. We refer, e.g., to \cite{DDD97}, \cite{DNS06}, \cite{D98} for a detailed discussion of these relationships. Based on these facts,  we can now  make the following statement:  the use of adaptivity is justified if the smoothness of the unknown solution in the adaptivity scale of Besov spaces is higher than its Sobolev smoothness. In the realm of elliptic partial differential equations, a lot of positive results  have been established in the last years, see,  e.g., 
\cite{Da99b}, \cite{Da99c}, \cite{DD97}, \cite{DahDieHar+2014}, \cite{DaSi08}, \cite{DaSi13}, \cite{Han2014} 
(once again, this list is clearly not complete). In the meantime,  also first results for stochastic evolution equations  have been derived \cite{CDK+}.  However, to the best of our knowledge, no results in this direction are known for integral equations such as the double layer or the single layer potential operator.  
Therefore the aim of this paper is to fill this gap and to study the Besov smoothness of the solutions to integral equations on 
patchwise
smooth manifolds in order  to justify the use of adaptive wavelet algorithms. We think that this is an important issue since in many applications  such as the treatment of elliptic PDEs on unbounded, exterior domains the reduction to an integral equation on the boundary of the domain is very often the method of choice. We refer to the textbook \cite{SS11} for a detailed discussion. Moreover, it seems that integral equations are particularly suited for the treatment by adaptive wavelet schemes  since the vanishing moment property of wavelets can very efficiently be used to compress the usually densely populated system 
matrices; see, e.g., \cite{DHS07}, \cite{HS03}, \cite{Sch06}
for details. Indeed, in many cases, it has turned out that adaptive wavelet Galerkin boundary element methods (BEM)
outperform other powerful methods such as, e.g., multipole expansions \cite{OSW05}.

The starting point of our investigations has been the following observation: On smooth manifolds, usually no gain (except for constants) in the adaptivity scale of Besov spaces compared to the Sobolev scale can be expected since then the problem is completely regular.  Analogously to the case of elliptic PDEs on non-smooth domains, one would conjecture that the situation is quite different on a general Lipschitz manifold. However, in this case one is faced with a serious problem:  smoothness spaces such as Sobolev and Besov spaces constructed  by charts and partitions of unity are usually only well-defined for smoothness parameters  
$s \leq 1$; see \cite{T06}!  
The situation is slightly better for 
patchwise 
smooth manifolds 
such as, e.g., boundaries 
of polyhedral domains in $\R^3$; then the upper bound is given by $s<\min\{3/2, s_{\partial \Omega}\}$, where $s_{\partial \Omega}$ only depends on the Lipschitz character of the domain. We refer again to \cite{SS11}.
But even then, the usual definition of Besov spaces for  high smoothness parameters $s$  does not make any sense. One might define these spaces as abstract trace spaces \cite{CS10}, but then no intrinsic characterization is available and it is unclear how a wavelet characterization of these spaces should look like. We therefore proceed in a different way as we shall now explain.  Wavelets have the potential to characterize function spaces such as Besov spaces in the sense that weighted sequence norms of wavelet expansion coefficients are equivalent to smoothness norms \cite{FJ90}, \cite{HN09}, \cite{T06}. Exactly these norm equivalences have been  used to establish the already mentioned relationships of  best $n$--term approximation and Besov regularity  \cite{DNS06}, \cite{D98}, \cite{DJP92}. Therefore, given a suitable wavelet basis $\Psi$ on the manifold under consideration, we {\em define} new 
Besov-type spaces $B^\alpha_{\Psi, q}(L_p(\partial\Omega))$ 
as the sets of all functions for which the wavelet coefficients satisfy  specific  decay conditions. For small values of $s$, these spaces clearly coincide with classical Besov spaces. As we shall see in \autoref{sect:nterm},
membership in these spaces again implies a certain decay rate of best $n$--term approximation schemes with respect to the underlying wavelet basis. Consequently, if it would be possible to show that the smoothness of the solutions to integral equations in these generalized Besov scales is generically larger compared to  their Sobolev regularity, then the use of adaptive algorithms would be  completely justified.  The analysis presented in this paper shows that this is indeed the case, at least for patchwise smooth manifolds contained in $\R^3$; see \autoref{sect:general_eq} and \autoref{sect:doublelayer}.

The proof of this result uses  the following properties of solutions to integral equations. On patchwise smooth manifolds, singularities at the interfaces might occur which can 
diminish
the classical Sobolev smoothness of the solution.  Nevertheless, the behaviour of these   singularities can be controlled by means of specific {\em weighted} Sobolev spaces $X_{\rho}^k(\partial \Omega)$, where the weight is defined by some power of the distance to the interfaces. Under natural conditions, the solutions to integral equations are indeed contained in such  spaces; see, e.g., \cite{E92} and \autoref{prop:Elschner}.
Then, it turns out that the combination of  (low) classical Sobolev/Besov  smoothness and  (higher)  weighted Sobolev regularity implies  generalized Besov smoothness, i.e., an embedding of the form
\begin{equation*}
	B^s_{p}(L_p(\partial \Omega)) \cap X^k_{\rho}(\partial \Omega) \hookrightarrow B^{\alpha}_{\Psi, \tau}(L_{\tau}(\partial \Omega)),
	\qquad \frac{1}{\tau} = \frac{\alpha}{2}+\frac{1}{2},
\end{equation*}
holds. Since  $\alpha$ can be chosen significantly larger than $s$,  this result proves the claim.  The non-standard embeddings stated above are clearly of independent interest, and they constitute the main  result of this paper.

This paper is organized as follows: We start in 
\autoref{sect:boundaries}
with some preparations concerning the parametrizations of surfaces.  
In \autoref{sect:sobolev},
we introduce the weighted 
Sobolev spaces~$X^k_\rho(\partial\Omega)$, 
and we recall basic properties of wavelets on manifolds as far as it is needed for our purposes. 
In \autoref{sect:Besov}
we define our new 
Besov-type spaces $B^{\alpha}_{\Psi,q}(L_p(\partial\Omega))$
and clarify the relations to best $n$--term wavelet approximation. 
\autoref{sect:results}
contains the main results of this paper. 
In \autoref{sect:embeddings}
we state and prove the central non-standard embeddings 
mentioned above; see \autoref{thm:embedding}.
For the proof, the wavelet coefficients of a function in 
$B^s_{\Psi,p}(L_p(\partial \Omega)) \cap X^k_{\rho}(\partial \Omega)$
have to be estimated. This is performed by  combining  the vanishing moment property of wavelets and Whitney-type estimates with the weighted Sobolev regularity. 
In \autoref{thm:general_eq}, \autoref{sect:general_eq},
we apply the embedding results to general operator 
equations. Finally, \autoref{thm:doublelayer} in \autoref{sect:doublelayer} discusses a very 
important test example, i.e., the double layer potential for the Laplace operator.  
\autoref{sect:appendixA} contains the proofs of several technical lemmata and auxiliary propositions, 
whereas in \autoref{sect:appendixB} we show results which are essential ingredients for the proof of our main \autoref{thm:embedding}.

\textbf{Notation:} For families $\{a_{\J}\}_{\J}$ and $\{b_{\J}\}_{\J}$ of non-negative real numbers over a common index set we write $a_{\J} \lesssim b_{\J}$ if there exists a constant $c>0$ (independent of the context-dependent parameters $\J$) such that
\begin{equation*}
	a_{\J} \leq c\cdot b_{\J}
\end{equation*}
holds uniformly in $\J$.
Consequently, $a_{\J} \sim b_{\J}$ means $a_{\J} \lesssim b_{\J}$ and $b_{\J} \lesssim a_{\J}$.

\section{Parametrizations of surfaces}\label{sect:boundaries}
In this paper we restrict ourselves to Lipschitz surfaces $\partial\Omega$ which are boundaries of bounded, simply connected, closed polyhedra $\Omega\subset\R^3$ with finitely many flat, quadrilateral sides and straight edges; see \autoref{fig:surface}. 

In what follows we will pursue essentially two different (but equivalent) approaches to describe $\partial\Omega$ where both of them will be used later on. 
The first approach, based on a patchwise decomposition, is motivated by practical applications related to Computer Aided Geometric Design (CAGD).
On the other hand, we will also work with a description in terms of tangent cones which seems to be more appropriate for the analysis of functions on $\partial\Omega$.

Consider the decomposition 
\begin{equation}\label{partition}
	\partial\Omega = \bigcup_{i=1}^I \overline{F_i},
\end{equation}
where $\overline{F_i}$ denotes the closure of the $i$th (open) \emph{patch} of $\partial\Omega$ which is a subset of some affine hyperplane in $\R^3$, bounded by a closed polygonal chain connecting exactly four points (vertices of~$\Omega$).
Note that neither the domain $\Omega$ nor the quadrilaterals $F_i$ are assumed to be convex. In particular, some of the patches may possess a reentrant corner. However, we do require that the partition \link{partition} is essentially disjoint in the sense that the intersection of any two patches $\overline{F_i} \cap \overline{F_\ell}$, $i\neq \ell$, is either empty, a common edge, or a common vertex of $\Omega$.
Furthermore, we will assume the existence of (sufficiently smooth) diffeomorphic parametrizations
\begin{equation*}
	\kappa_i \colon [0,1]^2 \nach \overline{F_i}, \qquad i=1,\ldots,I,
\end{equation*}
which map the unit square onto these patches.
Finally, we define the class of \emph{patchwise smooth functions} on $\partial\Omega$ by
\begin{equation*}
	C_{\mathrm{pw}}^\infty(\partial\Omega) 
	:= \left\{ u \colon \partial\Omega \nach \C \sep u \text{ is globally continuous and } u\big|_{\overline{F_i}} \in C^{\infty}\!\left( \overline{F_i}\right) \text{ for all } i \right\}.
\end{equation*}

In the second approach the surface of $\Omega$ is modeled (locally) in terms of the boundary of its tangent cones $\Co_n$ subordinate to the vertices $\nu_1,\ldots,\nu_N$ of $\Omega$.
Recall that $x\in\R^3$ belongs to the (infinite) \emph{tangent cone} $\Co_n$ on $\Omega$, subordinate to the vertex $\nu_n$, if either $x=\nu_n$, or
\begin{equation*}
	\left\{x' = \nu_n + \gamma \, (x-\nu_n) \in \R^3 \sep \gamma \in [0,c) \right\} \subset \Omega
	\qquad \text{for some} \qquad 
	c>0.
\end{equation*}
The boundary of the cone $\Co_n$, $n\in\{1,\ldots,N\}$, consists of essentially disjoint, open plane sectors (called \emph{faces}) which will be denoted by $\Gamma^{n,1}, \ldots, \Gamma^{n,T_n}$, i.e.,
\begin{equation*}
	\partial\Co_n = \bigcup_{t=1}^{T_n} \overline{\Gamma^{n,t}}, \qquad n=1,\ldots,N.
\end{equation*}
We stress that for every $n$ the number of faces $T_n \geq 3$ can be arbitrary large (but finite). 
Moreover, note that for every pair $(n,t)$ with $n\in\{1,\ldots,N\}$ and $t\in\{1,\ldots,T_n\}$ there exists a uniquely defined patch number $i(n,t)\in\{1,\ldots,I\}$ such that
\begin{equation}\label{def_patch}
	\nu_n \in \overline{F_{i(n,t)}}
	\qquad \text{and} \qquad
	F_{i(n,t)} \cap \Gamma^{n,t} \neq \leer.
\end{equation}
It will be convenient to use local Cartesian (and/or polar) coordinates in each of the faces~$\Gamma^{n,t}$.
Formally that means we reparametrize every $\Gamma^{n,t}$ in $\R^3$ by some infinite plane sector 
\begin{equation*}
	\tilde{\Gamma^{n,t}} := \left\{ y = (r\,\cos(\phi), r\,\sin(\phi)) \in \R^2 \sep r\in(0,\infty), \phi\in(0,\gamma_{n,t}) \right\}
\end{equation*}
in $\R^2$ using a suitable, invertible mapping
\begin{equation*}
	R_{n,t} \colon \R^2 \supset \overline{\tilde{\Gamma^{n,t}}} \ni y \mapsto x\in  \overline{\Gamma^{n,t}} \subset \R^3.
\end{equation*}
Hence, $R_{n,t}$ is a composition of some extension, rotation and translation that maps $0\in\R^2$ onto the vertex $\nu_n$ of $\Omega$.
Observe that $R_{n,t}$ particularly preserves distances.

The advantage of the latter change of coordinates is that now functions $f_n \colon \partial\Co_n \nach \C$ can be described by a collection of functions $(f_{n,1},\ldots,f_{n,T_n})$, where
\begin{equation}\label{facewise_fkt}
	f_{n,t} := f_n\big|_{\overline{\Gamma^{n,t}}} \circ R_{n,t} \colon \overline{\tilde{\Gamma^{n,t}}} \nach \C, \qquad t=1,\ldots,T_n.
\end{equation}
The following class of (complex-valued) \emph{facewise smooth functions} with compact support in $\partial\Co_n$ will be of some importance in the sequel:
\begin{equation*}
	C_{0,\mathrm{fw}}^\infty(\partial\Co_n) 
	:= \left\{ f_n \in C_0(\partial\Co_n) \sep f_{n,t} \in C^{\infty}\!\left( \overline{\tilde{\Gamma^{n,t}}}\right) \text{ for all } t=1,\ldots,T_n \right\}.
\end{equation*}
In order to analyze functions defined on the whole surface $\partial\Omega$ we localize them to cone faces near the vertices of $\Omega$, using a special resolution of unity.

A \emph{special resolution of unity} on $\partial\Omega$ subordinate to the vertices $\nu_1,\ldots,\nu_N$ is a collection of non-negative functions $(\varphi_n)_{n=1}^N \subset C^\infty_{\mathrm{pw}}(\partial\Omega)$ with the following features.
Apart from the usual resolution property,
\begin{equation}\label{resolution}
	\sum_{n=1}^N \varphi_n(x)=1\quad \text{for all} \quad x\in\partial\Omega,
\end{equation}
we moreover assume that for all $n\in\{1,\ldots,N\}$ there exist open neighborhoods
\begin{equation*}
	\nu_n\in\U_{n,0} \subsetneqq \U_{n,1} \subsetneqq \partial\Omega
\end{equation*}
such that
\begin{enumerate}[label=(U\arabic{*}), ref=U\arabic{*}]
	\item \label{U1} $\dist{\U_{n,1}}{F_\ell} \geq C_1$ \quad for all \quad $\ell\in\{1,\ldots,I\}$ \quad with \quad $\nu_n \notin \overline{F_\ell}$,
	\item \label{U2} $\min\!\left\{ \dist{\U_{n,1}\cap \Gamma^{n,t}}{F_{i(n,t)} \setminus \Gamma^{n,t}} \sep t=1,\ldots,T_n \right\} \geq C_1$,
\end{enumerate}
and
\begin{equation}\label{eq:phi}
	\varphi_n(x) 
	= \begin{cases}
		1, &x\in \U_{n,0},\\
		0, &x\in \partial\Omega \setminus \U_{n,1}.
	\end{cases}	
\end{equation}
Therein $C_1$ is some small, positive constant and $\dist{M_1}{M_2}$ denotes the shortest distance between the sets $M_1$ and $M_2$ along their common superset (which is $\partial\Omega$ or some affine hyperplane, depending on the context). We visualized \link{U1} and \link{U2} by small arrows in the second picture of \autoref{fig:surface} below.  

Note that \link{U1} immediately implies the essentially disjoint representation
\begin{equation}\label{U3}
	\U_{n,1} =\bigcup_{t=1}^{T_n} \left( \U_{n,1} \cap \overline{\Gamma^{n,t}} \right)
\end{equation}
for all $n\in\{1,\ldots,N\}$. Moreover, combining \link{resolution} and \link{eq:phi} yields
\begin{equation*}
	\partial\Omega = \bigcup_{n=1}^N \U_{n,1}.
\end{equation*}
\begin{figure}[ht]
	\begin{center}
\begin{tikzpicture}
\fill [color=gray!20] (0,0)--(6,-0.5)--(-1,1.5)--(0.5,0.5)--(0,0);
\fill (0,0) circle (2pt) ;
\fill (6,-0.5)  circle (2pt);
\fill [white] (0.5,0.5) circle (2pt);
\draw (0.5,0.5) circle (2pt);
\fill (-1,1.5) circle (2pt);
\draw (0,0)--(6,-0.5);
\draw [dashed] (6,-0.5)--(-1,1.5);
\draw [dashed] (-1,1.5)--(0.45,0.55);
\draw [dashed] (0.45,0.45)--(0,0);
\draw (-1,1.5)--(0,0.84);

\fill (6,4)  circle (2pt);
\draw (6,-0.5)--(6,4);
\fill (-1,3.5)  circle (2pt);
\draw (-1,1.5)--(-1,3.5);
\fill (0,2.5)  circle (2pt);
\draw (0,0)--(0,2.5);
\fill (0.5,3.2) circle (2pt);
\draw [dashed] (0.5,0.57)--(0.5,3.2);

\draw (6,4)--(0,2.5)--(0.5,3.2)--(-1,3.5)--(6,4);
\end{tikzpicture}
		\hfill
\begin{tikzpicture}
\fill [color=gray!20] (2,1.5)--(2,3.5)--(3.05,2.25)--(2,1.5);
\fill (0,0) circle (2pt) node[anchor=north] {$\nu_n$};
\fill (5,0) circle (2pt);
\fill (2,1.5) circle (2pt);
\fill (2,3.5) circle (2pt);
\draw (0,0)--(5,0)--(2,3.5)--(2,1.5)--(0,0);

\draw [thick, dotted] (0,0) circle [radius=1.1];
\draw [thick] (1.1,0) arc (0:38:1.1); 
\draw (0,0) node[anchor=south east] {$\U_{n,0}$};

\draw [thick, dotted] plot [smooth] coordinates {(1.25,1) (1.2, 1.2) (1, 1.5) (0.4, 2.1)};
\draw [thick] plot [smooth] coordinates {(1.25,1) (1.35,0.8) (2,0.75) (2.75,1.4) (3.1,1.4) (4,0.5) (4,0)};
\draw [thick, dotted] plot [smooth] coordinates {(4,0) (3.8,-0.5) (3.6,-0.8)};
\draw (3,0) node[anchor=south] {$\U_{n,1}$};

\draw [dashed] (2,1.5) -- (5,3.75);
\draw [dashed] (5,0) -- (5.6,0);
\draw (4.7,2) node[anchor=west] {$\Gamma^{n,t}$};

\draw [<->,>=stealth'] (3.81,0.77) -- (4.15,0.9);
\draw [<->,>=stealth'] (2.9,1.55) -- (2.8,2);
\end{tikzpicture}
	\end{center}
	\caption{Boundary of some polyhedron $\Omega\subset\R^3$ with reentrant corners (left) and non-convex patch $F_{i(n,t)}$ illustrating conditions \link{U1} and \link{U2} of a special resolution of unity (right).}\label{fig:surface}
\end{figure}
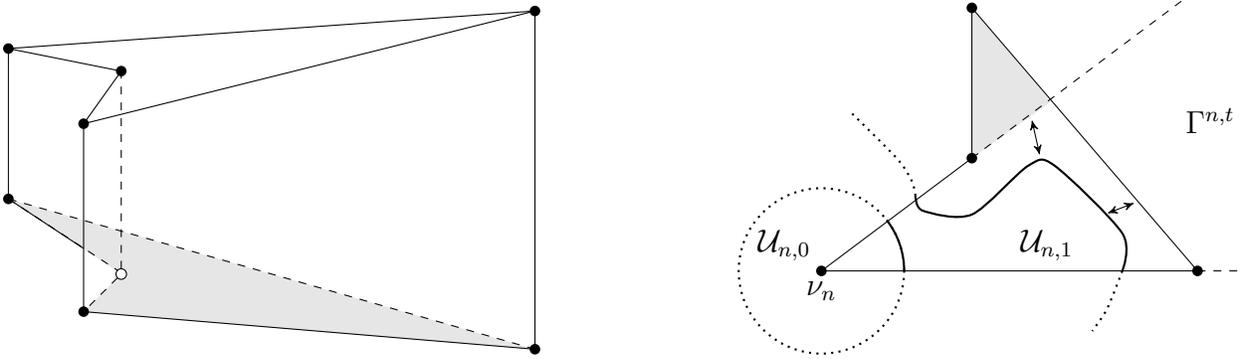 

\begin{remark}
For the sake of simplicity we imposed very strong assumptions on the surfaces under consideration.
This can be done without loss of generality because our proofs require local arguments only. Thus our analysis carries over to a fairly general class of two-dimensional manifolds including important model problems such as, e.g., surfaces of tetrahedra or the boundary of Fichera's corner $\Omega=[-1,1]^3\setminus[0,1]^3$. For this purpose we simply need to introduce additional vertices which subdivide the manifold into quadrilateral patches. We stress that then formally the function spaces we are going to deal with will depend on the concrete choice of these degenerate vertices. However, note that this is a purely theoretical issue which does not diminish the practical applicability in any sense. 
Finally, also the application of diffeomorphisms to the patches $F_i$ would not harm our analysis. Therefore in particular globally smooth manifolds (e.g., spheres) can be treated as well.
\end{remark}

\section{Function spaces of Sobolev type and wavelets}\label{sect:sobolev}
In the first part of this section we give a definition of weighted Sobolev spaces due to~\cite{E92} which can be traced back to~\cite{MP83}.
Afterwards, in \autoref{sect:composite}, we state and discuss some common requirements of wavelet constructions on manifolds that will be needed for our purposes later on.

\subsection{Weighted Sobolev spaces on $\partial\Omega$}\label{sect:weighted}
We begin with weighted Sobolev spaces on the boundary of infinite (tangent) cones. As before the functions of interest are described facewise; see \link{facewise_fkt}.

For compactly supported functions $f_n\colon \partial\Co_n\nach\C$, as well as $k\in\N_0$ and $\rho,\mu\geq 0$, we define $\norm{f_n \sep X^k_{\rho,\mu}(\partial\Co_n)}$ as
\begin{equation*}
	\norm{f_n \sep L_2(\partial\Co_n)} 
		+ \sum_{t=1}^{T_n} \sum_{\substack{\beta=(\beta_r,\beta_\phi)\in\N_0^2\\1\leq\abs{\beta}\leq k}} \norm{ r^{-\mu} (1+r)^{\mu} (q\,r)^{\beta_r} \left( \frac{\partial}{\partial r} \right)^{\beta_r} q^{\beta_\phi-\rho} \left( \frac{\partial}{\partial \phi} \right)^{\beta_\phi} f_{n,t} \sep L_2\!\left(\tilde{\Gamma^{n,t}}\right) },
\end{equation*}
where, as usual, the sum over an empty set is to be interpreted as zero.
Therein $f_{n,t}$ is assumed to be given in polar coordinates $(r,\phi)$ and $q$ is defined by
\begin{equation*}
	q(\phi):=\min\!\left\{ \phi, \gamma_{n,t} - \phi \right\} \in (0,\pi),
\end{equation*}
where $\gamma_{n,t}$ denotes the opening angle of $\tilde{\Gamma^{n,t}}$.
Hence, $r$ and $q$ measure the distance to the face boundary.
For some $C_2>0$ and every $y\in \overline{\tilde{\Gamma^{n,t}}}$ let
\begin{equation*}
	\Delta_{n,t}(y) := \dist{y}{\partial\tilde{\Gamma^{n,t}}}
	\quad \text{and} \quad
	\delta_{n,t}(y) := \min\!\left \{C_2, \Delta_{n,t}(y) \right\}.
\end{equation*}
Then $y=(r\,\cos\phi, r\,\sin\phi)$ yields
\begin{eqnarray}
	\delta_{n,t}(y) 
	&=& \min\!\left\{C_2, r \cdot \sin\!\left( \min\!\left\{ \pi/2, \phi, \gamma_{n,t}-\phi \right\} \right) \right\} \nonumber\\
	&=& \min\!\left\{C_2, r \cdot \sin\!\left( \min\!\left\{ \pi/2, q(\phi) \right\} \right) \right\} \nonumber\\
	&\leq & \min\!\left\{C_2, r \cdot q(\phi) \right\} \nonumber\\
	&\leq & r \cdot q(\phi). \label{eq:bound_delta}
\end{eqnarray}
Following~\cite{E92} weighted Sobolev spaces on the boundary of the cone $\Co_n$ can now be defined as the closure of all continuous, facewise smooth, compactly supported functions on~$\partial\Co_n$ with respect to this norm:
\begin{equation*}
		X^k_{\rho,\mu}(\partial\Co_n)
		:= \overline{C_{0,\mathrm{fw}}^\infty (\partial\Co_n)}^{\, \norm{\cdot \sep X^k_{\rho,\mu}(\partial\Co_n)} }
		\quad \text{and} 
		\quad X^k_{\rho}(\partial\Co_n) := X^k_{\rho,\rho}(\partial\Co_n).
\end{equation*}
In what follows we will exclusively deal with the case $\mu=\rho$ and small values of $k$.
Under these restrictions the name \emph{weigthed Sobolev space} is justified by the following lemma which is proven in \autoref{sect:appendixA}.
\begin{lemma}\label{lem:weighted_Sobolev}
	Let $n\in\{1,\ldots,N\}$, $k\in\N$ and $0\leq \rho \leq k$. Then every $f_n\in X^k_\rho(\partial\Co_n)$ with $\supp f_n \subset \overline{\U_{n,1}}$ satisfies
	\begin{equation*}
		\norm{\delta_{n,t}^{k-\rho} \cdot D^\alpha_y f_{n,t} \sep L_2\!\left(\tilde{\Gamma^{n,t}}\right)}
		\lesssim \norm{f_n \sep X_\rho^k(\partial\Co_n)}
	\end{equation*}
	for all $t\in\{1,\ldots,T_n\}$ and each $\alpha\in\N_0^2$ with $\abs{\alpha}\leq k$.
\end{lemma}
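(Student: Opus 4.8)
The plan is to compare the weighted norm $\norm{f_n \sep X^k_\rho(\partial\Co_n)}$ with the quantity on the left-hand side. Since $X^k_\rho(\partial\Co_n)$ is defined as the closure of $C_{0,\mathrm{fw}}^\infty(\partial\Co_n)$ with respect to $\norm{\cdot \sep X^k_\rho(\partial\Co_n)}$, it suffices to establish the estimate for $f_n \in C_{0,\mathrm{fw}}^\infty(\partial\Co_n)$ with $\supp f_n \subset \overline{\U_{n,1}}$ and then pass to the limit. Fix $t\in\{1,\ldots,T_n\}$ and $\alpha = (\alpha_r,\alpha_\phi) \in \N_0^2$ with $\abs{\alpha}\leq k$. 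The Cartesian derivative $D^\alpha_y f_{n,t}$ has to be re-expressed in polar coordinates: writing out the chain rule, $D^\alpha_y$ is a linear combination of terms of the form $r^{-j}\left(\frac{\partial}{\partial r}\right)^{\beta_r}\left(\frac{\partial}{\partial\phi}\right)^{\beta_\phi} f_{n,t}$ with $\beta_r + \beta_\phi = \abs{\beta} \leq \abs{\alpha} \leq k$ and $0 \leq j \leq \alpha_\phi \leq \abs{\alpha}$, more precisely with $j = \beta_\phi - \alpha_\phi + (\text{something} \geq 0)$; the crucial bookkeeping is that each angular differentiation $\partial/\partial\phi$ in Cartesian coordinates costs one factor of $1/r$, so the total power of $1/r$ never exceeds the number of angular derivatives taken.

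Next I would weight by $\delta_{n,t}^{k-\rho}$ and invoke the bound \link{eq:bound_delta}, namely $\delta_{n,t}(y) \leq \min\{C_2, r\cdot q(\phi)\} \leq r\cdot q(\phi)$, and also $\delta_{n,t}(y) \leq C_2$. The aim is to dominate $\delta_{n,t}^{k-\rho} \cdot \abs{r^{-j}(\partial_r)^{\beta_r}(\partial_\phi)^{\beta_\phi} f_{n,t}}$ pointwise by a constant times the generic summand appearing in the definition of the $X^k_\rho$-norm, which (after multiplying and dividing by appropriate powers) reads $\abs{(qr)^{\beta_r}(\partial_r)^{\beta_r} q^{\beta_\phi-\rho}(\partial_\phi)^{\beta_\phi} f_{n,t}}$ when $\mu = \rho$ reduces the factor $r^{-\mu}(1+r)^{\mu}$ — careful, on $\overline{\U_{n,1}}$ one has $r$ bounded, so $r^{-\rho}(1+r)^\rho \sim r^{-\rho}$, and since $\supp f_n \subset \overline{\U_{n,1}}$ this factor is itself controlled; but one must be a little careful near $r=0$, which is exactly where the $r^{-\rho}$ factor in the $X$-norm does its job. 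The matching of powers then amounts to: we need $\delta^{k-\rho} \lesssim (rq)^{\beta_r} q^{\beta_\phi - \rho} / r^{-j}$ up to the allowed $r$-weight, i.e. $\delta^{k-\rho} r^j \lesssim r^{\beta_r} q^{\beta_r + \beta_\phi - \rho}$. Using $\delta \leq rq$ and $k - \rho \geq 0$, the left side is $\lesssim (rq)^{k-\rho} r^j = r^{k-\rho+j} q^{k-\rho}$; one then checks $k - \rho + j \geq \beta_r$ (since $k \geq \beta_r + \beta_\phi \geq \beta_r$ and $\rho \leq k$, with the $r^{-j}$ contributing favorably) and $k - \rho \geq \beta_r + \beta_\phi - \rho$, i.e. $k \geq \abs{\beta}$, which holds. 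Where exponents are "too large" (extra positive powers of $r$ or $q$) we simply use that $r \lesssim 1$ on $\overline{\U_{n,1}}$ and $q \leq \pi$.

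After the pointwise domination, I take $L_2(\tilde{\Gamma^{n,t}})$ norms on both sides and sum the finitely many terms produced by the chain rule, obtaining $\norm{\delta_{n,t}^{k-\rho} D^\alpha_y f_{n,t} \sep L_2} \lesssim \sum_{1\leq\abs{\beta}\leq k}\norm{\cdots \sep L_2(\tilde{\Gamma^{n,t}})} + \norm{f_n \sep L_2(\partial\Co_n)} \lesssim \norm{f_n \sep X^k_\rho(\partial\Co_n)}$, where the $L_2$ term covers the case $\alpha = 0$. Finally the density argument closes the proof: for general $f_n \in X^k_\rho(\partial\Co_n)$ with $\supp f_n \subset \overline{\U_{n,1}}$, approximate by $C_{0,\mathrm{fw}}^\infty$ functions (which one can take with supports in a fixed slightly larger neighborhood), apply the estimate to the differences, and use completeness of $L_2(\tilde{\Gamma^{n,t}})$ to identify the limit of $\delta_{n,t}^{k-\rho} D^\alpha_y$ of the approximants with $\delta_{n,t}^{k-\rho} D^\alpha_y f_{n,t}$. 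The main obstacle is the careful combinatorial tracking of the powers of $r$ and $q$ (equivalently of $1/r$) generated when converting the Cartesian derivative $D^\alpha_y$ into polar form, and verifying in each case that the resulting exponent of $\delta_{n,t}$, after using \link{eq:bound_delta} and the boundedness of $r$ and $q$ on $\overline{\U_{n,1}}$, does not exceed what the $X^k_\rho$-norm supplies; everything else is routine.
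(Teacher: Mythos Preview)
Your overall strategy coincides with the paper's: handle $\alpha=0$ trivially via $\delta_{n,t}\leq C_2$, then for $\abs{\alpha}\geq 1$ convert $D^\alpha_y$ to polar form, use $\delta_{n,t}\leq rq$ from \link{eq:bound_delta}, and match powers of $r$ and $q$ against the building blocks of the $X^k_\rho$-norm, absorbing surplus positive powers via boundedness of $r$ on $\overline{\U_{n,1}}$ and $q\leq\pi$. The density argument you add at the end is harmless but not needed; the paper works directly with the norm.

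However, your bookkeeping of the power of $1/r$ is off. First, $\alpha$ is a \emph{Cartesian} multi-index, so writing $\alpha=(\alpha_r,\alpha_\phi)$ is a notational slip. More importantly, the claim that the total power of $1/r$ never exceeds the number of angular derivatives $\beta_\phi$ is false. The correct structure, which the paper states and uses, is that $D^\alpha_y$ expands as
\[
\sum_{1\leq\abs{\beta}\leq\abs{\alpha}} c_\beta(\phi)\,r^{\beta_r-\abs{\alpha}}\,\left(\frac{\partial}{\partial r}\right)^{\beta_r}\left(\frac{\partial}{\partial\phi}\right)^{\beta_\phi},
\]
so the exponent of $1/r$ is exactly $\abs{\alpha}-\beta_r$, which for lower-order terms with $\abs{\beta}<\abs{\alpha}$ strictly \emph{exceeds} $\beta_\phi$. (For instance, $\partial^2/\partial y_1^2$ contains a term $(\sin^2\phi/r)\,\partial/\partial r$ with $\beta_\phi=0$ but one factor of $1/r$.) With the correct exponent the weight comparison reduces to checking $r^{k-\abs{\alpha}}\lesssim(1+r)^\rho$ on the bounded support (true since $k\geq\abs{\alpha}$) and $q^{k-\abs{\beta}}\lesssim 1$ (true since $q\leq\pi$ and $k\geq\abs{\beta}$), which is precisely the paper's verification. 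Your underestimate of $j$ makes the $r$-comparison look easier than it is; since the extra negative powers of $r$ matter near $r=0$, this is a genuine gap in the execution, though the fix is simply to insert the correct exponent $j=\abs{\alpha}-\beta_r$.
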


With the help of a special resolution of unity $(\varphi_n)_{n=1}^N$ that localizes functions on $\partial\Omega \subset\bigcup_{n=1}^N \partial\Co_n$ to the boundary of the tangent cones $\Co_n$ of $\Omega$ we define
\begin{equation}\label{def_X}
	\norm{u \sep X^k_\rho(\partial\Omega)} 
	:= \sum_{n=1}^N \norm{\varphi_n u \sep X^k_\rho(\partial\Co_n)}
\end{equation}
for $u\colon \partial \Omega \nach \C$.
Finally, for $k\in\N$ and $0\leq\rho\leq k$, we set
\begin{equation*}
	X^k_{\rho}(\partial\Omega)
		:= \overline{C_{\mathrm{pw}}^\infty (\partial\Omega)}^{\, \norm{\cdot \sep X^k_{\rho}(\partial\Omega)} }.
\end{equation*}

\subsection{Wavelet bases and (unweighted) Sobolev spaces on $\partial\Omega$}\label{sect:composite}
During the past years, wavelets on domains $\Omega \subseteq \R^d$ have become a powerful tool in both pure and applied mathematics. 
More recently, several authors proposed various constructions of wavelet systems extending the idea of multiscale analysis to manifolds based on patchwise descriptions such as \link{partition}; see, e.g., \cite{CTU99,CTU00}, \cite{CM00}, \cite{DS99}, \cite{HaSch04}, \cite{HS06}. Later on in this paper,  bases of these kinds  will be used to define new types  of Besov spaces on $\partial \Omega$. Therefore, in this subsection, we collect some basic properties that will be needed for this purpose.

With the help of the parametric liftings $\kappa_i$, $i=1,\ldots,I$, an inner product for functions $u,v\colon\partial\Omega\nach\C$ can be defined patchwise by
\begin{equation*}
	\distr{u}{v} := \sum_{i=1}^I \distr{u\circ \kappa_i}{v\circ\kappa_i}_{\square},
\end{equation*}
where $\distr{\cdot}{\cdot}_{\square}$ denotes the usual $L_2$-inner product on the square $[0,1]^2$.
Since all $\kappa_i$ are assumed to be sufficiently smooth the norm induced by $\distr{\cdot}{\cdot}$ can be shown to be equivalent to the norm in $L_2(\partial\Omega)$:
\begin{equation}\label{normeq}
	\normmod{\cdot}_0 := \sqrt{\distr{\cdot}{\cdot}} \sim \norm{\cdot \sep L_2(\partial\Omega)},
\end{equation}
see, e.g., Formula (4.5.3) in~\cite{DS99}.

Most of the known wavelet  constructions are based on tensor products of boundary-adapted wavelets/scaling functions (defined on intervals) which are finally lifted to the patches $F_i$ describing the surface $\partial\Omega$.
A wavelet basis $\Psi=(\Psi^{\partial\Omega}, \tilde{\Psi}^{\partial\Omega})$ on $\partial\Omega$
then consists of two collections of functions $\psi^{\partial\Omega}_{j,\xi}$ and $\tilde{\psi}^{\partial\Omega}_{j,\xi}$, respectively, that form ($\distr{\cdot}{\cdot}$-biorthogonal) Riesz bases for $L_2(\partial\Omega)$. 
In particular, every $u\in L_2(\partial\Omega)$ has a unique expansion
\begin{equation}\label{expansion}
	u = P_{j^*-1}(u) + \sum_{j\geq j^*} \sum_{\xi \in {\nabla}_j^{\partial\Omega}} \distr{u}{\tilde{\psi}^{\partial\Omega}_{j,\xi}} \, \psi^{\partial\Omega}_{j,\xi}
\end{equation}
satisfying
\begin{equation*}
		\norm{u\sep L_2(\partial\Omega)} 
		\sim \norm{P_{j^*-1}(u) \sep L_2(\partial\Omega)} + \left( \sum_{j\geq j^*} \sum_{\xi \in {\nabla}_j^{\partial\Omega}} \abs{\distr{u}{\tilde{\psi}^{\partial\Omega}_{j,\xi}}}^2 \right)^{1/2}.
\end{equation*}
Therein $P_{j^*-1}$ denotes the biorthogonal projector that maps $L_2(\partial\Omega)$ onto the finite dimensional span of all generators  on the  coarsest level  $j^*-1$. 

In the sequel, we will require  that the wavelet basis under consideration satisfies all the conditions  collected in the following assumption.

\begin{ass}\label{ass:basis}
\noindent
\begin{itemize}
\item[(I)] As indicated in (\ref{expansion}),  both (the primal and the dual) systems are indexed by their level of resolution $j\geq j^*$, as well as their location (and type) $\xi \in \nabla^{\partial\Omega}$. 
We assume that this collection of grid points on the surface $\partial\Omega$ can be split up according to the levels $j$ and the patches $\overline{F_i}$:
\begin{equation*}
	{\nabla}^{\partial\Omega} 
	= \bigcup_{j=j^*}^\infty {\nabla}_j^{\partial\Omega}
	\quad \text{where, for all } j\geq j^*,
	\quad 
	{\nabla}_j^{\partial\Omega} = \bigcup_{i=1}^I {\nabla}_j^{F_i}
	\quad \text{with}\quad   \#\nabla_j^{F_i} \sim 2^{2j}.
\end{equation*}

\item[(II)]  All dual wavelets are $L_2$-normalized: 
\begin{equation}\label{normalized}
	\norm{\tilde{\psi}^{\partial\Omega}_{j,\xi} \sep L_2(\partial\Omega)} \sim 1
	\qquad \text{for all} \qquad j\geq j^*, \, \xi\in {\nabla}_j^{\partial\Omega}.
\end{equation}

\item[(III)] We assume that all elements $\tilde{\psi}^{\partial\Omega}_{j,\xi} \in \tilde{\Psi}^{\partial\Omega}$ are compactly supported on $\partial\Omega$. Furthermore, we assume that their supports  contain the  corresponding grid point~$\xi$ and satisfy
\begin{equation}\label{supp}
	\abs{\supp \tilde{\psi}^{\partial\Omega}_{j,\xi}} \sim 2^{-2j}
	\qquad \text{for all} \qquad j\geq j^*, \, \xi\in {\nabla}_j^{\partial\Omega}.
\end{equation}

\item[(IV)] Consider the set $\Pi_{\tilde{d}-1}([0,1]^2)$ of polynomials $\P$ on the unit square which have a total degree $\deg\P$ strictly less than $\tilde{d}$.
Then we assume that the  dual system $\tilde{\Psi}^{\partial\Omega}$ satisfies
\begin{equation}\label{vanish}
	\distr{\P}{\tilde{\psi}^{\partial\Omega}_{j,\xi}\circ \kappa_i}_\square = 0
	\qquad
	\text{for all}
	\qquad \P\in \Pi_{\tilde{d}-1}([0,1]^2),
\end{equation}
whenever $\tilde{\psi}^{\partial\Omega}_{j,\xi} \in \tilde{\Psi}^{\partial\Omega}$ is completely supported in the interior of some patch $F_i\subset\partial\Omega$, $i\in\{1,\ldots,I\}$. 
This property is commonly known as \emph{vanishing moment property} of order $\tilde{d}\in\N$.
\item[(V)]  The number of dual wavelets at level $j$ with distance $2^{-j}$ to one of the patch boundaries is of order  $2^{j}$, i.e.,
\begin{equation}\label{number_wavelets1}
\#\!\left\{ \xi \in \nabla^{\partial \Omega}_j \sep 0 < \dist{\supp \tilde{\psi}^{\partial\Omega}_{j,\xi}}{\bigcup_{i=1}^I \partial F_i} \lesssim 2^{-j} \right\} \sim 2^{j} 
\quad \text{for all} \quad j\geq j^*.
\end{equation}
Moreover, for the dual wavelets intersecting one of the patch interfaces we assume that
\begin{equation*}
\#\!\left\{ \xi \in \nabla^{\partial \Omega}_j \sep \supp \tilde{\psi}^{\partial\Omega}_{j,\xi} \cap \bigcup_{i=1}^I \partial F_i \neq \leer \right\} \lesssim 2^{j}
\quad \text{for all} \quad j\geq j^*.
\end{equation*}

\item[(VI)] Every point $x\in\partial\Omega$ is contained in the supports of a uniformly bounded number of dual wavelets at level $j$:
\begin{equation}\label{finite_overlap}
\#\!\left\{ \xi \in \nabla^{\partial \Omega}_j \sep x\in\supp \tilde{\psi}^{\partial\Omega}_{j,\xi} \right\} \lesssim 1
\quad \text{for all} \quad j\geq j^* \quad \text{and each} \quad x\in\partial\Omega.
\end{equation}

\item[(VII)] Finally, we assume that the Sobolev spaces $H^s(\partial\Omega)=W^s(L_2(\partial\Omega))$ in the scale 
\begin{equation*}
	-\frac{1}{2} < s < \min\!\left\{\frac{3}{2}, s_{\partial\Omega}\right\}
\end{equation*}
can be characterized by the decay of wavelet expansion coefficients, that is 

\begin{equation}\label{equiv_sobolev_norm}
		\norm{u\sep H^s(\partial\Omega)} 
		\sim \norm{P_{j^*-1}(u) \sep L_2(\partial\Omega)} + \left( \sum_{j\geq j^*} \sum_{\xi \in {\nabla}_j^{\partial\Omega}} 2^{2sj} \abs{\distr{u}{\tilde{\psi}^{\partial\Omega}_{j,\xi}}}^2 \right)^{1/2}.
\end{equation}

Here the spaces for negative $s$ are defined by duality and $s_{\partial\Omega} \geq 1$ depends on the interior angles between different patches $F_i$ of the manifold under consideration; cf.\ \cite[Section~4.5]{DS99}.
\end{itemize}
\end{ass}

Fortunately, all these  assumptions are satisfied for all  the constructions we mentioned at the beginning of this subsection. In  particular, the \emph{composite wavelet basis} as constructed in~\cite{DS99} is a typical example which will serve as our main reference.
Note that although those wavelets are usually at most continuous across patch interfaces, they are able to capture arbitrary high smoothness in the interior 
by increasing the order of the underlying boundary-adapted wavelets.

\section{Spaces of Besov type and their properties}\label{sect:Besov}
The main objective of this section is to introduce new function spaces on $\partial\Omega$. These spaces are generalized Besov spaces which determine the decay rate of best $n$--term approximation.
Moreover, we investigate a couple of useful properties of these scales of spaces.

\subsection{Besov-type spaces on patchwise smooth manifolds}\label{sect:besov_def}
Besov spaces essentially generalize the concept of Sobolev spaces.
On $\R^d$ they are typically defined using harmonic analysis, finite differences, moduli of smoothness, or interpolation techniques.
Characteristics (embeddings, interpolation results, and approximation properties) of these scales of spaces then require deep proofs within the classical theory of function spaces. Often they are obtained by reducing the assertion of interest to the level of sequences spaces by means of characterizations in terms of building blocks (atoms, local means, quarks, or wavelets).
To mention at least a few references the reader is referred to the monographs \cite{RS96}, \cite{T06}, as well as to the articles \cite{DJP92}, \cite{FJ90}, \cite{KMM07}. This list is clearly not complete.

Besov spaces on manifolds such as boundaries of domains in $\R^d$ can be defined as trace spaces or via pullbacks based on (overlapping) resolutions of unity. In general traces of wavelets are not wavelets anymore, and if we use pullbacks, then wavelet characterizations are naturally limited by the global smoothness of the underlying manifold.
Therefore there seems to exist no approach, suitable for numerical applications, to define higher order (Besov) smoothness of functions defined on surfaces that are only patchwise smooth.

In the following we propose a notion of Besov-type spaces \emph{based on} expansions w.r.t.\ some biorthogonal wavelet Riesz basis $\Psi=(\Psi^{\partial\Omega}, \tilde{\Psi}^{\partial\Omega})$ satisfying the conditions of the previous section which we assume to be given fixed: 

\begin{defi}
A tuple of real parameters $(\alpha, p, q)$ is said to be \emph{admissible} if
\begin{equation}\label{parameter}
	\frac{1}{2} \leq \frac{1}{p} \leq \frac{\alpha}{2} + \frac{1}{2}
	\qquad \text{and} \qquad 
	0 < q 
	\leq \begin{cases}
		2, & \text{if} \qquad 1/p = \alpha/2 + 1/2, \\   
		\infty, & \text{otherwise}.
	\end{cases}
\end{equation}
Given a wavelet basis $\Psi=(\Psi^{\partial\Omega}, \tilde{\Psi}^{\partial\Omega})$ on $\partial\Omega$
and a tuple of admissible parameters $(\alpha, p, q)$
let $B_{\Psi,q}^\alpha(L_p(\partial\Omega))$ denote the collection of all complex-valued functions $u\in L_2(\partial\Omega)$ such that the (quasi-) norm
\begin{equation*}
		\norm{u \sep B_{\Psi,q}^\alpha(L_p(\partial\Omega))}
		:= \norm{P_{j^*-1}(u) \sep L_p(\partial\Omega)} + \left( \sum_{j\geq j^*} 2^{j \left(\alpha+2\left[ \frac{1}{2} - \frac{1}{p} \right] \right)q} \left[ \sum_{\xi \in {\nabla}_j^{\partial\Omega}} \abs{\distr{u}{\tilde{\psi}^{\partial\Omega}_{j,\xi}}}^p \right]^{q/p} \right)^{1/q}
\end{equation*}
is finite. 
If $q=\infty$, then we shall use the usual modification, i.e., we define 
\begin{equation*}
		\norm{u \sep B_{\Psi,\infty}^\alpha(L_p(\partial\Omega))}
		:= \norm{P_{j^*-1}(u) \sep L_p(\partial\Omega)} 
		+ \sup_{j\geq j^*} 2^{j \left(\alpha+2\left[ \frac{1}{2} - \frac{1}{p} \right] \right)} \left[ \sum_{\xi \in {\nabla}_j^{\partial\Omega}} \abs{\distr{u}{\tilde{\psi}^{\partial\Omega}_{j,\xi}}}^p \right]^{1/p}.
\end{equation*}
\end{defi}
\begin{remark}\label{rem:special_besov}
Let us add some comments on this definition:
\noindent
\begin{itemize}
\item[(i)] First of all note that membership of $u$ in $L_2(\partial\Omega)$ implies the unique expansion \link{expansion}. In particular, the projector $P_{j^*-1}$, as well as the wavelet coefficients under consideration, are well-defined.

\item[(ii)] Formally, different bases $\Psi$ might lead to different function spaces even if all remaining parameters $(\alpha,p,q)$ that determine the spaces may coincide.
On the other hand, there are good reasons to conjecture that spaces based on wavelet systems having similar properties (e.g., order of smoothness, vanishing moments) actually coincide. 
This issue is addressed in a separate paper \cite{Wei2014}.
Anyhow, from the application point of view, this does not really matter at all since in any case we are only interested in approximation properties with respect to the \emph{underlying} basis.

\item[(iii)] Observe that the defined expression simplifies when $p=q<\infty$. In this case
\begin{equation*}
		\norm{u \sep B_{\Psi,p}^\alpha(L_p(\partial\Omega))}
		= \norm{P_{j^*-1}(u) \sep L_p(\partial\Omega)} + \left( \sum_{j\geq j^*} 2^{j\alpha p + j(p-2)} \sum_{\xi \in {\nabla}_j^{\partial\Omega}} \abs{\distr{u}{\tilde{\psi}^{\partial\Omega}_{j,\xi}}}^p \right)^{1/p}.
\end{equation*}
Let us briefly discuss the limiting cases for $p$ under this restriction.
For $p=q=2$ the last formula reduces to the right-hand side of \link{equiv_sobolev_norm}. Hence, if $0 \leq\alpha < \min\{3/2, s_{\partial\Omega}\}$, then $B_{\Psi,2}^\alpha(L_2(\partial\Omega))$ equals $H^\alpha(\partial\Omega)$ in the sense of equivalent norms. This particularly covers $L_2(\partial\Omega)$ itself, where $\alpha=0$.
On the other hand, for $p=q:=\tau$ given by  $1/\tau:=\alpha/2+1/2$ the (quasi-) norms further simplify to
\begin{equation}\label{besov_norm}
		\norm{u \sep B_{\Psi,\tau}^\alpha(L_\tau(\partial\Omega))}
		= \norm{P_{j^*-1}(u) \sep L_\tau(\partial\Omega)} + \left( \sum_{j\geq j^*} \sum_{\xi \in {\nabla}_j^{\partial\Omega}} \abs{\distr{u}{\tilde{\psi}^{\partial\Omega}_{j,\xi}}}^\tau \right)^{1/\tau}
\end{equation}
which resembles the so-called adaptivity scale.
\end{itemize}
\end{remark}

In the remainder of this subsection we collect basic properties of the Besov-type spaces introduced above.
To start with, we note that using standard arguments it can be shown that 
all spaces $B_{\Psi,q}^\alpha(L_p(\partial\Omega))$ are quasi-Banach spaces, Banach spaces if and only if $p \geq 1$ and $q\geq 1$, and Hilbert spaces if and only if $p=q=2$.

In \autoref{sect:appendixA} a proof of the following standard embeddings
can be found.
\begin{prop}\label{prop:standard_emb}
For $\gamma \in \R$ let $(\alpha+\gamma,p_0,q_0)$ and $(\alpha,p_1,q_1)$ be admissible parameter tuples.
Then we have the continuous embedding
\begin{equation*}
	B_{\Psi,q_0}^{\alpha+\gamma}(L_{p_0}(\partial\Omega)) \hookrightarrow B_{\Psi,q_1}^{\alpha}(L_{p_1}(\partial\Omega))
\end{equation*}
if and only if one of the following conditions applies
\begin{itemize}
	\item $\gamma > 2 \cdot \max\!\left\{0,\frac{1}{p_0} - \frac{1}{p_1}\right\}$
	\item $\gamma = 2 \cdot \max\!\left\{0,\frac{1}{p_0} - \frac{1}{p_1}\right\}$ \quad and \quad $q_0\leq q_1$.
\end{itemize}
\end{prop}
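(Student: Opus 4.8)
\textbf{Proof strategy for \autoref{prop:standard_emb}.}

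The plan is to reduce everything to the level of the weighted sequence spaces appearing in the definition of $B_{\Psi,q}^\alpha(L_p(\partial\Omega))$. For a fixed function $u\in L_2(\partial\Omega)$ write $d_{j,\xi}:=\distr{u}{\tilde\psi^{\partial\Omega}_{j,\xi}}$ and set, for admissible $(\alpha,p,q)$,
\begin{equation*}
	a_j(\alpha,p):=2^{j(\alpha+2[1/2-1/p])}\Bigl(\sum_{\xi\in\nabla^{\partial\Omega}_j}\abs{d_{j,\xi}}^p\Bigr)^{1/p},
\end{equation*}
so that the Besov (quasi-)norm is $\norm{P_{j^*-1}(u)\sep L_p}+\norm{(a_j)_{j\geq j^*}\sep \l_q}$. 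The coarse-level term is harmless: $P_{j^*-1}(u)$ lives in a fixed finite-dimensional space on which all $L_p$-norms are equivalent, so it can be ignored for the equivalence. The claim then splits into two independent one-parameter statements, carried out in this order: first the change of the integrability index $p_0\to p_1$ at fixed level (a ``Jawerth--Franke''-type discrete inequality), then the change of the summability index $q_0\to q_1$ across levels (a plain $\l_{q_0}\hookrightarrow\l_{q_1}$ argument), and finally a counterexample showing that the stated conditions are also necessary.

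For the $p$-step I would argue levelwise using \autoref{ass:basis}(I). If $p_0\leq p_1$, the embedding $\l_{p_0}(\nabla_j^{\partial\Omega})\hookrightarrow\l_{p_1}(\nabla_j^{\partial\Omega})$ gives $\bigl(\sum_\xi\abs{d_{j,\xi}}^{p_1}\bigr)^{1/p_1}\leq\bigl(\sum_\xi\abs{d_{j,\xi}}^{p_0}\bigr)^{1/p_0}$ with constant $1$, hence $a_j(\alpha,p_1)\leq 2^{j(\alpha-(\alpha+\gamma))+2j[1/p_0-1/p_1]}\,a_j(\alpha+\gamma,p_0)=2^{j(2[1/p_0-1/p_1]-\gamma)}a_j(\alpha+\gamma,p_0)$; if $p_0\geq p_1$, Hölder on the index set of cardinality $\#\nabla_j^{\partial\Omega}\sim 2^{2j}$ yields $\bigl(\sum_\xi\abs{d_{j,\xi}}^{p_1}\bigr)^{1/p_1}\lesssim 2^{2j(1/p_1-1/p_0)}\bigl(\sum_\xi\abs{d_{j,\xi}}^{p_0}\bigr)^{1/p_0}$, which after multiplying by the dyadic weights leaves $a_j(\alpha,p_1)\lesssim 2^{-j\gamma}a_j(\alpha+\gamma,p_0)$ since $\max\{0,1/p_0-1/p_1\}=0$ here. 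In both regimes one obtains $a_j(\alpha,p_1)\lesssim 2^{-j\varepsilon}a_j(\alpha+\gamma,p_0)$ where $\varepsilon:=\gamma-2\max\{0,1/p_0-1/p_1\}\geq 0$. Once this levelwise domination is in hand, the $q$-step is routine: if $\varepsilon>0$, Hölder in $j$ (or simply $\l_\infty\hookrightarrow\l_{q_1}$ after absorbing the geometric factor $2^{-j\varepsilon}$) shows $\norm{(a_j(\alpha,p_1))\sep\l_{q_1}}\lesssim\norm{(a_j(\alpha+\gamma,p_0))\sep\l_{q_0}}$ for \emph{any} $q_0,q_1$; if $\varepsilon=0$, the factor $2^{-j\varepsilon}\equiv 1$ so we are left with the bare inclusion $\l_{q_0}\hookrightarrow\l_{q_1}$, which holds exactly when $q_0\leq q_1$. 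This gives the ``if'' direction in both cases.

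For necessity I would test against explicitly constructed coefficient sequences, exploiting that $\Psi$ is a Riesz basis so any $\l_2$ sequence $(d_{j,\xi})$ is realized by some $u\in L_2(\partial\Omega)$. To rule out $\varepsilon<0$, put all mass on a single wavelet per level, $d_{j,\xi_j}=2^{-j\lambda}$ for a well-chosen $\lambda$ making the target norm infinite but the source norm finite, using that $2^{j(\cdot)}$ weights differ precisely by $2^{j\varepsilon}$. To rule out $q_0>q_1$ in the borderline case $\varepsilon=0$, take $a_j$ of the form $j^{-\theta}$ with $1/q_1<\theta\leq 1/q_0$, again distributing the coefficients over the $\nabla_j^{\partial\Omega}$ so that $a_j(\alpha+\gamma,p_0)\sim j^{-\theta}$; then the source sequence lies in $\l_{q_0}$ but not $\l_{q_1}$. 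The one subtlety is making sure the test sequences respect admissibility of both parameter tuples (in particular the constraint $q\leq 2$ on the critical line $1/p=\alpha/2+1/2$), which is why the borderline counterexample must be placed off that line when possible, or otherwise chosen with $q_0\leq 2$; I expect this bookkeeping of the admissibility region, rather than any analytic difficulty, to be the main thing to watch. All of this is standard once the levelwise estimate is set up, so I would relegate the full details to \autoref{sect:appendixA} as the paper does.
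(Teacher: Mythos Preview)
Your sufficiency argument is correct and in fact more explicit than the paper's: the paper transfers to sequence spaces $b^\alpha_{p,q}(\nabla)$ via an isomorphism and then simply cites \cite[Lemma~4]{DNS06} for the case $\gamma\geq 0$, whereas you spell out the levelwise $\ell_p$-inclusion / H\"older dichotomy and the subsequent $\ell_q$-step directly. Both routes are standard; yours is more self-contained.

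There is, however, a genuine gap in your necessity argument for $\varepsilon<0$. The claim that for a single-wavelet-per-level sequence ``the $2^{j(\cdot)}$ weights differ precisely by $2^{j\varepsilon}$'' is wrong when $p_0>p_1$. With exactly one nonzero coefficient at level~$j$ the inner $\ell_{p_0}$- and $\ell_{p_1}$-sums coincide, so
\[
\frac{a_j(\alpha,p_1)}{a_j(\alpha+\gamma,p_0)}
= 2^{-j\gamma+2j(1/p_0-1/p_1)},
\]
which equals $2^{-j\varepsilon}$ only when $1/p_0\geq 1/p_1$. In the regime $p_0>p_1$ one has $\varepsilon=\gamma$, and if $2(1/p_0-1/p_1)<\gamma<0$ the ratio above actually tends to zero, so the single-spike sequence lies in the target whenever it lies in the source---no contradiction. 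The underlying reason is that your levelwise bound in the $p_0>p_1$ regime came from H\"older, which is saturated by \emph{flat} sequences, not spikes. The paper's counterexample for $\gamma<0$ accordingly puts uniform mass $a^*_{j,\lambda}:=2^{-j(1+\alpha+\gamma/2)}$ on every $\lambda\in\nabla_j$; then $a_j(\alpha+\gamma,p_0)\sim 2^{j\gamma/2}$ and $a_j(\alpha,p_1)\sim 2^{-j\gamma/2}$ regardless of $p_0,p_1$, which does the job. The same caveat applies to your borderline construction: when $p_0>p_1$ you must distribute flatly over $\nabla_j^{\partial\Omega}$ (so that both $a_j(\alpha+\gamma,p_0)$ and $a_j(\alpha,p_1)$ come out $\sim j^{-\theta}$), while for $p_0\leq p_1$ the spike works. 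Once you make this spike-versus-flat choice according to the sign of $1/p_0-1/p_1$, your argument goes through.
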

The subsequent remark discusses some special cases.
\begin{remark}
Let $p_0=q_0<\infty$, as well as $p_1=q_1<\infty$.
\begin{itemize}
\item[(i)] Setting $p_0:=p_1:=2$ and assuming $0 \leq \alpha \leq \alpha+\gamma < \min\{3/2, s_{\partial\Omega}\}$ we see that \autoref{prop:standard_emb} covers the well-known embeddings within the scale of Sobolev (Hilbert) spaces: $H^{\alpha+\gamma}(\partial\Omega)\hookrightarrow H^{\alpha}(\partial\Omega)$.

\item[(ii)] Again let $p_0:=2$. Setting $p_1:=\tau:=(\alpha/2+1/2)^{-1}$ then shows
\begin{equation*}
	H^s(\partial\Omega) \hookrightarrow B_{\Psi,\tau}^{\alpha}(L_{\tau}(\partial\Omega))
	\quad \text{for all} \quad 0 \leq \alpha < s,
\end{equation*}
provided that $0 < s < \min\{3/2, s_{\partial\Omega}\}$ such that \link{equiv_sobolev_norm} applies.
Note that this is optimal in the sense that 
$\alpha \geq s$ is not possible (in this generality) since 
\autoref{prop:standard_emb} shows that there are no embeddings at all if $\gamma < 0$ (or if $\gamma=0$ and $q_0 > q_1$).
However, in \autoref{sect:embeddings} below we will show that 
functions in $H^s(\partial\Omega)$ possess Besov smoothness $\alpha > s$ if they also belong to some weighted Sobolev space.
\item[(iii)] Finally we set $p_0:=\tau_0:=([\alpha+\gamma]/2+1/2)^{-1}$ and $p_1:=\tau_1:=(\alpha/2+1/2)^{-1}$. Then we obtain the natural embedding
\begin{equation*}
	B_{\Psi,\tau_0}^{\alpha+\gamma}(L_{\tau_0}(\partial\Omega)) \hookrightarrow B_{\Psi,\tau_1}^{\alpha}(L_{\tau_1}(\partial\Omega)).
\end{equation*}
along the adaptivity scale. This resembles the classical Sobolev embedding.
\end{itemize} 
\end{remark}

The embeddings stated in \autoref{prop:standard_emb} can be illustrated by a \emph{DeVore-Triebel diagram}; see \autoref{fig:DeVoreTriebel}.
Therein the solid half-lines starting from the point $(1/2,0)$ (which corresponds to the space $L_2(\partial\Omega)$) describe the boundaries of the area of admissible parameters~\link{parameter}. They contain Hilbert spaces such as $H^{s'}(\partial\Omega)=B^{s'}_{\Psi,2}(L_2(\partial\Omega))$ on the left, as well as the adaptivity scale $B^\alpha_{\Psi,\tau}(L_\tau(\partial\Omega))$, with $1/\tau=\alpha/2+1/2$ and $\alpha\geq 0$, on the right.
Moreover, the shaded region refers to all spaces which are embedded into $H^{s'}(\partial\Omega)$, whereas the arrows indicate limiting cases for possible embeddings of the space $B^s_{\Psi,p}(L_p(\partial\Omega))$.
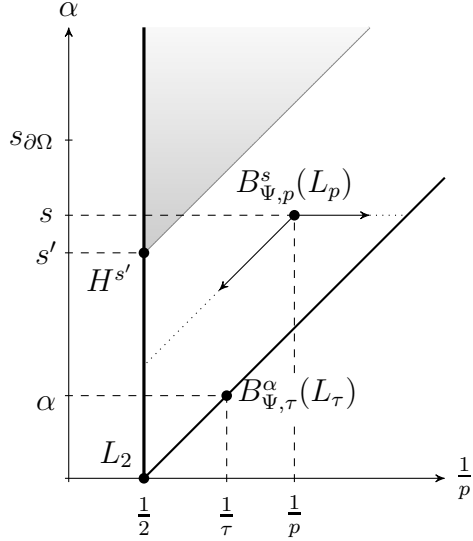
\begin{figure}[H]
	\begin{center}
\begin{tikzpicture}
\draw [->,>=stealth'] (0,-0.05) -- (0,6) node[above] {$\alpha$};
\draw [->,>=stealth'] (-0.05,0) -- (5,0) node[right] {$\frac{1}{p}$};

\draw [gray, thick] (1,3) -- (4,6);
\shade [left color=gray!40, right color=gray!5, shading angle=180] (1,3)--(1,6)--(4,6)--(1,3);

\draw [thick] (1,0) -- (5,4);
\draw [very thick] (1,0) -- (1,6);
\fill (1,0) circle (2pt) node[anchor=south east] {$L_2$};

\draw (-0.3,3) node {$s'$};
\draw [dashed] (-0.05,3) -- (1,3);
\draw (1,0) -- (1,-0.05);
\draw (1,-0.5) node {$\frac{1}{2}$};
\fill (1,3) circle (2pt) node[anchor=north east] {$H^{s'}$};

\draw (-0.05,4.5) node[anchor=east] {$s_{\partial\Omega}$} -- (0.05,4.5);

\draw (-0.3,3.5) node {$s$};
\draw [dashed] (-0.05,3.5) -- (3,3.5);
\fill (3,3.5) circle (2pt) node[anchor=south] {$B^s_{\Psi,p}(L_p)$};
\draw [dashed] (3,-0.05) -- (3,0.7);
\draw [dashed] (3,1.4) -- (3,3.5);
\draw (3,-0.5) node {$\frac{1}{p}$};

\draw [->,>=stealth'] (3,3.5) -- (4,3.5);
\draw [dotted] (4,3.5)--(4.5,3.5);
\draw [->,>=stealth'] (3,3.5) -- (2,2.5);
\draw [dotted] (2,2.5)--(1,1.5);

\draw (-0.3,1) node {$\alpha$};
\draw [dashed] (-0.05,1.1) -- (2.1,1.1);
\fill (2.1,1.1) circle (2pt) node[anchor=west] {$B^\alpha_{\Psi,\tau}(L_\tau)$};
\draw [dashed] (2.1,-0.05) -- (2.1,1.1);
\draw (2.1,-0.5) node {$\frac{1}{\tau}$};
\end{tikzpicture}
	\end{center}
\caption{DeVore-Triebel diagram indicating the area of admissible parameters, as well as standard embeddings stated in \autoref{prop:standard_emb}.}\label{fig:DeVoreTriebel}
\end{figure} 

Finally, the following (complex) interpolation result holds. Its proof, as well as further explanations on the used method, is postponed to \autoref{sect:appendixA}.

\begin{prop}\label{prop:interpol}
	Let $(\alpha_0,p_0,q_0)$ and $(\alpha_1,p_1,q_1)$ be admissible parameter tuples such that $\min\{q_0,q_1\}<\infty$. Then we have (in the sense of equivalent quasi-norms)
\begin{equation*}
	\left[ B_{\Psi,q_0}^{\alpha_0}(L_{p_0}(\partial\Omega)), B_{\Psi,q_1}^{\alpha_1}(L_{p_1}(\partial\Omega))\right]_\Theta
	= B_{\Psi,q_\Theta}^{\alpha_\Theta}(L_{p_\Theta}(\partial\Omega))
	\qquad \text{for all} \qquad 
	\Theta\in(0,1),
\end{equation*}	
	where $(\alpha_\Theta,p_\Theta,q_\Theta)$, given by $\alpha_\Theta:= (1-\Theta)\alpha_0 + \Theta \alpha_1$ , $1/p_\Theta := (1-\Theta)/p_0 + \Theta/p_1$, and $1/q_\Theta := (1-\Theta)/q_0 + \Theta/q_1$, defines an admissible tuple of parameters.
\end{prop}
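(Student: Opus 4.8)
The plan is to reduce the interpolation assertion for the Besov-type spaces $B_{\Psi,q}^\alpha(L_p(\partial\Omega))$ to the corresponding, well-known interpolation result for vector-valued sequence spaces of the form $\ell_q\!\left(2^{j\sigma}\, \ell_p(\nabla_j^{\partial\Omega})\right)$. The key observation is that, by the very definition of the spaces, the map $u \mapsto \big(P_{j^*-1}(u),\, (\distr{u}{\tilde\psi^{\partial\Omega}_{j,\xi}})_{j,\xi}\big)$ is an isometric isomorphism (up to equivalent quasi-norms) from $B_{\Psi,q}^\alpha(L_p(\partial\Omega))$ onto a closed subspace (a \emph{retract}) of the sequence space $L_p(\partial\Omega)\times b_{p,q}^{\sigma}$, where $\sigma=\sigma(\alpha,p):=\alpha+2[1/2-1/p]$ and $b_{p,q}^{\sigma}$ denotes the weighted sequence space carrying the norm appearing in the definition. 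The retraction/coretraction pair is given by the analysis operator $A\colon u\mapsto (\distr{u}{\tilde\psi^{\partial\Omega}_{j,\xi}})$ and the synthesis operator $S\colon (c_{j,\xi})\mapsto \sum c_{j,\xi}\,\psi^{\partial\Omega}_{j,\xi}$ (together with the trivial component on the coarse scale), and biorthogonality gives $S\circ A=\Id$ on each space simultaneously.

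First I would make precise the sequence-space picture: fix the wavelet basis and note that both operators $A$ and $S$ are bounded on every $B_{\Psi,q}^\alpha(L_p(\partial\Omega))$ (this is immediate from the definition of the norm, which is literally the image norm under $A$) and on the respective sequence spaces, with $S\circ A=\Id$. Then I invoke the standard fact from interpolation theory (the \emph{retract theorem}, e.g.\ Triebel, \emph{Interpolation Theory, Function Spaces, Differential Operators}, Section~1.2.4) that complex interpolation commutes with the formation of retracts: if $X_i$ is a retract of $Y_i$ via a common pair $(S,A)$ for $i=0,1$, then $[X_0,X_1]_\Theta$ is a retract of $[Y_0,Y_1]_\Theta$ via the same pair, hence $[X_0,X_1]_\Theta = S\big([Y_0,Y_1]_\Theta\big)$ with equivalent norms. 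Thus the problem reduces to computing $[L_{p_0}(\partial\Omega)\times b_{p_0,q_0}^{\sigma_0},\, L_{p_1}(\partial\Omega)\times b_{p_1,q_1}^{\sigma_1}]_\Theta$.

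Next I would handle the two factors separately. The complex interpolation of the $L_p(\partial\Omega)$ components is classical: $[L_{p_0},L_{p_1}]_\Theta = L_{p_\Theta}$ with $1/p_\Theta=(1-\Theta)/p_0+\Theta/p_1$. For the weighted sequence spaces, $b_{p,q}^{\sigma}$ is nothing but the mixed-norm space $\ell_q\big(2^{j\sigma q};\ell_p(\nabla_j^{\partial\Omega})\big)$; the complex interpolation of such weighted vector-valued $\ell_q(\ell_p)$ spaces is again classical (Triebel's monograph, Section~1.18.1, or the standard results on interpolation of $\ell_q$-valued sequence spaces valid also in the quasi-Banach range provided $\min\{q_0,q_1\}<\infty$), yielding $[b_{p_0,q_0}^{\sigma_0},b_{p_1,q_1}^{\sigma_1}]_\Theta = b_{p_\Theta,q_\Theta}^{\sigma_\Theta}$ with $1/q_\Theta=(1-\Theta)/q_0+\Theta/q_1$ and $\sigma_\Theta=(1-\Theta)\sigma_0+\Theta\sigma_1$. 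A short computation shows $\sigma_\Theta = \alpha_\Theta + 2[1/2-1/p_\Theta]$ where $\alpha_\Theta=(1-\Theta)\alpha_0+\Theta\alpha_1$, which is exactly the exponent attached to the space $B_{\Psi,q_\Theta}^{\alpha_\Theta}(L_{p_\Theta}(\partial\Omega))$; one also checks readily that $(\alpha_\Theta,p_\Theta,q_\Theta)$ is admissible (the inequalities in \link{parameter} are preserved under convex combination, and the boundary case $1/p_\Theta=\alpha_\Theta/2+1/2$ with the constraint $q_\Theta\leq 2$ follows since it can only occur when both endpoints lie on that boundary line). Applying the retract theorem then gives the claim, and it remains only to note that the hypothesis $\min\{q_0,q_1\}<\infty$ is exactly what is needed so that the interpolation couple is "interpolation-friendly" in the quasi-Banach setting (the space of test sequences with finitely many nonzero entries is dense in at least one endpoint, so Calderón's complex method behaves well).

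The main obstacle I anticipate is technical rather than conceptual: making the retract argument rigorous in the quasi-Banach range $p<1$ or $q<1$, where Calderón's original complex method requires care (one should use Calderón's method for quasi-Banach spaces, or equivalently work with an analytically convex / lattice structure as in Kalton--Mitrea and Triebel). Checking that $L_p(\partial\Omega)$ for $p<1$ and the weighted $\ell_q(\ell_p)$ spaces fit into a framework where the retract theorem and the endpoint computations remain valid — and that the mixed norm $b_{p,q}^{\sigma}$ really is analytically convex in the relevant sense — is the delicate point; everything else is bookkeeping with exponents. I expect the cleanest route is to cite the version of the complex method for quasi-Banach lattices together with its retract theorem, under the standing assumption $\min\{q_0,q_1\}<\infty$, and then simply verify the exponent identities.
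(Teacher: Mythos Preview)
Your approach is correct and close in spirit to the paper's, but the target of the retraction and the cited interpolation result differ. The paper first identifies $B_{\Psi,q}^\alpha(L_p(\partial\Omega))$ with a sequence space $b_{p,q}^\alpha(\nabla)$ over \emph{finite} index sets $\nabla_j$ (absorbing the coarse-level term $P_{j^*-1}(u)$ into the sequence via its finitely many coefficients rather than keeping a separate $L_p$ factor), then uses a second retraction $(\Ext,\Res)$ to embed $b_{p,q}^\alpha(\nabla)$ into $b_{p,q}^\alpha(\tilde\nabla)$ with $\tilde\nabla_j=\{1,2,3\}\times\Z^2$, and finally invokes Triebel's isomorphism $b_{p,q}^\alpha(\tilde\nabla)\cong B_{p,q}^\alpha(\R^2)$ to quote the complex interpolation result of Kalton--Mayboroda--Mitrea for Besov spaces on~$\R^2$. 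Your route---retract directly onto a weighted $\ell_q(\ell_p)$ space and interpolate there---is more self-contained and avoids the detour through $\R^2$, but the reference you give (Triebel, Section~1.18.1) covers only the Banach range, so in the quasi-Banach case you still have to appeal to the analytically-convex/lattice framework you mention at the end; the paper's detour buys exactly this, since the needed quasi-Banach interpolation theorem is stated ready-made for $B_{p,q}^\alpha(\R^2)$. One small technical slip: your first factor should not be the full infinite-dimensional $L_p(\partial\Omega)$ but the finite-dimensional range of $P_{j^*-1}$ (on which all norms are equivalent), since otherwise the synthesis map $v\mapsto v+\sum c_{j,\xi}\psi_{j,\xi}^{\partial\Omega}$ need not land in $L_2(\partial\Omega)\supset B_{\Psi,q}^\alpha(L_p(\partial\Omega))$ for arbitrary $v\in L_p$ with $p<2$; this is harmless once noted.
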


\subsection{Best $n$--term wavelet approximation}\label{sect:nterm}
Let us recall the abstract definition of the concept of best $n$--term approximation:
\begin{defi}
Let $G$ denote a (quasi-) normed space and let $\B=\{f_1,f_2,\ldots\}$ be some countable subset of $G$. Then
\begin{equation}\label{def_sigma}
	\sigma_n(f;\B, G)
	:= \inf_{i_1,\ldots,i_n \in \N} \inf_{c_1,\ldots,c_n\in\C} \norm{f - \sum_{m=1}^n c_m f_{i_m} \sep G }, \qquad n\in\N,
\end{equation}
defines the error of the \emph{best $n$--term approximation} to some element $f$ w.r.t.\ the \emph{dictionary}~$\B$ in the (quasi-) norm of $G$.
\end{defi}

Since $i_1,\ldots,i_n$ and $c_1,\ldots,c_n$ may depend on $f$ in an arbitrary way this reflects how well we can approximate $f$ using a finite linear combination of elements in $\B$. Obviously these linear combinations form a highly non-linear manifold in the linear space $G$. 
In practice, one is clearly interested in the order of convergence that can be achieved by best $n$--term approximation schemes since this is in a certain sense the best we can expect. Of course, to get a reasonable result, additional information (such as further smoothness properties) of the target function $f$ is needed. This is usually modeled by some additional (quasi-) normed space~$F$.
Hence, if $F$ denotes such a space which is embedded into $G$, then we may study the asymptotic behavior of
\begin{equation*}
	\sigma_n(F;\B,G) := \sup_{\substack{f\in F,\\\norm{f \sep F} \leq 1}} \sigma_n(f;\B, G)
\end{equation*}
because it serves as a benchmark of how well optimal non-linear methods based on the dictionary $\B$ can approximate the embedding $F \hookrightarrow G$.
A prominent class of examples of such methods are algorithms based on wavelet expansions that use adaptive refinement strategies in contrast to linear, non-adaptive methods using uniform grid refinement; see, e.g., \cite[Section~3]{DNS06} for a detailed discussion.

We study the introduced quantities for the case where $F$ and $G$ are Besov-type spaces $B_{\Psi,q}^\alpha(L_p(\partial\Omega))$ and $\B=\Psi^{\partial\Omega}$ is the system of primal wavelets discussed in \autoref{sect:composite}.
The following assertion is proven in \autoref{sect:appendixA}.
\begin{prop}\label{prop:nterm}
	For $\gamma \in \R$ let $(\alpha+\gamma,p_0,q_0)$ and $(\alpha,p_1,q_1)$ be admissible parameter tuples.
	If $\gamma > 2 \cdot \max\!\left\{0,\frac{1}{p_0} - \frac{1}{p_1}\right\}$,
	then
	\begin{equation*}
		\sigma_n \!\left( B_{\Psi,q_0}^{\alpha+\gamma}(L_{p_0}(\partial\Omega));\Psi^{\partial\Omega},B_{\Psi,q_1}^\alpha(L_{p_1}(\partial\Omega)) \right) 
		\sim n^{-\gamma/2}.
	\end{equation*}
	Moreover, if $\gamma = 2 \cdot \max\!\left\{0,\frac{1}{p_0} - \frac{1}{p_1}\right\}$ and $q_0 \leq q_1$, then
	\begin{equation*}
		\sigma_n \!\left( B_{\Psi,q_0}^{\alpha+\gamma}(L_{p_0}(\partial\Omega));\Psi^{\partial\Omega},B_{\Psi,q_1}^\alpha(L_{p_1}(\partial\Omega)) \right) 
		\sim n^{-\min\{\gamma/2,\, 1/q_0-1/q_1\}}. 
	\end{equation*}
\end{prop}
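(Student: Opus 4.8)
The plan is to transfer the problem to the level of weighted sequence spaces and then invoke classical Jackson--Bernstein theory for best $n$--term approximation with respect to a Riesz basis, exactly as in \cite{DNS06,D98,DJP92}. Concretely, I would first observe that \autoref{prop:standard_emb} gives the continuous embedding $B_{\Psi,q_0}^{\alpha+\gamma}(L_{p_0}(\partial\Omega)) \hookrightarrow B_{\Psi,q_1}^{\alpha}(L_{p_1}(\partial\Omega))$ under either of the two stated hypotheses on $\gamma$, so that $\sigma_n$ is finite and the quantity under study is well-defined. The key structural fact is that, via the biorthogonal expansion \link{expansion} together with \link{normalized}, each Besov-type space $B_{\Psi,q}^\alpha(L_p(\partial\Omega))$ is isomorphic (after the natural renormalization of the coefficients $c_{j,\xi} := \distr{u}{\tilde\psi^{\partial\Omega}_{j,\xi}}$ by the factor $2^{j(\alpha + 2[1/2 - 1/p])}$ and accounting for $\#\nabla_j^{F_i}\sim 2^{2j}$ from \autoref{ass:basis}(I)) to a sequence space of Besov type $b_{p,q}^\beta$ over the dyadic index set, and that best $n$--term approximation by the primal system $\Psi^{\partial\Omega}$ in $G=B_{\Psi,q_1}^\alpha(L_{p_1})$ corresponds, under this isomorphism, to best $n$--term approximation by the canonical unit vectors in the corresponding sequence space. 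This reduction is the main point and should be stated carefully; it is routine but needs the Riesz basis property to pass from the function-space (quasi-)norm to the sequence (quasi-)norm without losing the $n$--term structure.

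Once on the sequence side, both regimes follow from the standard direct and inverse estimates. I would set $s := \gamma/2$, which is precisely the smoothness gain measured in $L_2$-scaling: indeed $(\alpha+\gamma) + 2[1/2 - 1/p_0] - (\alpha + 2[1/2-1/p_1]) = \gamma - 2(1/p_0 - 1/p_1)$, and the admissibility relations are arranged so that the effective ``Sobolev number'' of the embedding $F\hookrightarrow G$ in the approximation-theoretic sense is $\gamma/2$ when $1/p_0 \leq 1/p_1$ and, in the borderline case $\gamma = 2(1/p_0 - 1/p_1)$, the spaces sit on the same Sobolev line and only the fine index $q$ matters. For the strict inequality $\gamma > 2\max\{0, 1/p_0 - 1/p_1\}$ one invokes the Jackson inequality $\sigma_n(f;\Psi^{\partial\Omega}, G) \lesssim n^{-\gamma/2}\norm{f\sep F}$ (obtained by keeping the $n$ largest coefficients of the $G$-normalized expansion and estimating the tail using the embedding, i.e.\ a weak-$\ell_\rho$ argument à la Stechkin) together with the matching Bernstein inequality for finitely supported elements, which yields the lower bound $\sigma_n \gtrsim n^{-\gamma/2}$ by testing on suitable ``spread-out'' wavelet sums on a single level. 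In the borderline case $\gamma = 2\max\{0, 1/p_0-1/p_1\}$ with $q_0 \leq q_1$, the two spaces coincide up to the microscopic parameter, and the sharp rate $n^{-\min\{\gamma/2,\,1/q_0 - 1/q_1\}}$ is the known best $n$--term rate for the embedding $\ell_{q_0}^{\text{(weighted)}}\hookrightarrow \ell_{q_1}^{\text{(weighted)}}$ on the same smoothness line; here the $\min$ accounts for the possibility $p_0 = p_1$ (so $\gamma = 0$) versus $p_0 < p_1$ (so $\gamma>0$ contributes an extra algebraic gain beyond the pure $\ell_{q_0}\to\ell_{q_1}$ rate).

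The one genuine subtlety — and the step I expect to be the main obstacle — is verifying that the relevant Jackson and Bernstein estimates hold \emph{uniformly in the level structure} of the manifold, i.e.\ that the sequence-space model is faithful. This requires the cardinality control $\#\nabla_j^{\partial\Omega}\sim 2^{2j}$ from \autoref{ass:basis}(I) (so that the renormalization exponents are correct and independent of the patch) and the Riesz stability \link{expansion}, but no further geometric input; in particular the patchwise-smooth structure, the special resolution of unity, and the vanishing-moment property \link{vanish} play \emph{no role at all} here, since everything is phrased intrinsically in terms of wavelet coefficients. I would therefore present the proof as: (1) reduce to sequence spaces using the Riesz basis property and \autoref{ass:basis}(I),(II); (2) quote or reprove the Stechkin-type Jackson estimate and the Bernstein estimate for the model sequence spaces, citing \cite{DNS06,DJP92}; (3) assemble the upper and lower bounds in each of the two regimes, tracking the exponent $\gamma/2$ and, in the borderline case, the quantity $1/q_0 - 1/q_1$. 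The bookkeeping of admissibility constraints \link{parameter} — in particular checking that $\gamma \geq 0$ is forced and that $q_0\leq 2$ when $(\alpha+\gamma)$ lies on the adaptivity line — should be done explicitly so that no degenerate sub-case is overlooked.
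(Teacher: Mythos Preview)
Your proposal is correct and follows essentially the same route as the paper: reduce to the sequence-space model $b^\alpha_{p,q}(\nabla)$ via the coefficient isomorphism (exactly as in the proof of \autoref{prop:standard_emb}), and then invoke the known $n$--term approximation rates for those sequence spaces. The paper cites \cite[Theorem~7]{DNS06} for the strict-inequality regime and \cite[Theorem~4]{HanSic2011}/\cite{Kyr2001} for the borderline case $\gamma = 2(1/p_0-1/p_1)>0$, while handling the sub-case $\gamma=0$ by the trivial upper bound from the embedding together with an explicit single-level test sequence for the lower bound---which is precisely your ``spread-out wavelet sums on a single level'' idea.
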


\begin{remark}
Some comments are in order:
\begin{itemize}
\item[(i)] Note that \autoref{prop:nterm} covers all possible embeddings within our scale of Besov-type spaces; see \autoref{prop:standard_emb}.

\item[(ii)] In view of the applications outlined below we are particularly interested in the rate of convergence of best $n$--term approximation of 
\begin{equation*}
	u\in B_{\Psi,\tau}^{s+\gamma}(L_{\tau}(\partial\Omega)),
	\qquad \tau:=(\gamma/2+1/2)^{-1},
\end{equation*}
w.r.t.\ the norm in $H^s(\partial\Omega)$. 
This special case is covered setting $p_1:=q_1:=2$, as well as $p_0:=q_0:=\tau \leq 2$, and
$\alpha:= s \in [0, \max\{3/2, s_{\partial\Omega}\})$.
\end{itemize}
\end{remark}

\begin{remark}
In conclusion, up to now we successfully introduced a new scale of Besov-type spaces of functions on $\partial\Omega$ which behaves similar to well-established scales of function spaces on $\R^d$ and domains, respectively.
In particular, we were able to determine the rate of best $n$--term wavelet approximation in this new scale. 
Note that, in contrast to existing approaches which are, e.g., based on charts, our spaces are well-defined even for arbitrary high smoothness.
To justify the definition completely, it remains to show that these new spaces are reasonably large. 
To this end, we will prove that (under natural conditions) they contain 
functions in classical smoothness spaces which are in addition contained in weighted Sobolev spaces.
As we shall see later on, this particularly covers solutions to boundary operator equations such as the classical double layer equation for the Laplacian.
\end{remark}

\section{Main results}\label{sect:results}
In this section we are going to present our main results on Besov regularity for operator equations defined on patchwise smooth surfaces.
A major step in the proof is based on the continuous embedding 
\begin{equation*}
	B_{\Psi,p}^s(L_p(\partial\Omega)) \cap X_\rho^k(\partial\Omega) 
	\hookrightarrow B_{\Psi,\tau}^\alpha(L_\tau(\partial\Omega))
\end{equation*}
Since it considerably extends the statements in \autoref{prop:standard_emb} it is of interest on its own.
Thus, we formulate it as an individual result (\autoref{thm:embedding}) in the next subsection, whereas the general regularity assertion (\autoref{thm:general_eq}), as well as its application to the double layer potential for the Laplacian (\autoref{thm:doublelayer}), can be found in \autoref{sect:general_eq} and \autoref{sect:doublelayer}, respectively.

Throughout the whole section $\partial\Omega$ denotes the patchwise smooth boundary of some three-dimensional domain $\Omega$ as described in \autoref{sect:boundaries}.
Moreover, we assume to be given a biorthogonal wavelet Riesz basis $\Psi=(\Psi^{\partial\Omega},\tilde{\Psi}^{\partial\Omega})$ on $\partial\Omega$ satisfying the requirements stated in \autoref{ass:basis}, \autoref{sect:composite}.

\subsection{Non-standard embeddings}\label{sect:embeddings}
The embedding theorem reads as follows.
\begin{theorem}\label{thm:embedding}
	Assume $\tilde{d}\in\N$,  $k\in\{1,2,\ldots,\tilde{d}\}$, as well as $\rho\in(0,k)$, and let $(s,p,p)$ be an admissible tuple of parameters with $s>0$. 
	Then all tuples $(\alpha,\tau,\tau)$ satisfying
	\begin{equation}\label{embedding_cond}
		\frac{1}{\tau} = \frac{\alpha}{2}+\frac{1}{2}
		 \quad\, \text{and} \quad\,
		 0 \leq \alpha < 2 \alpha^* 
		 \quad\, \text{with} \quad\,
		\alpha^* := \min\!\left\{\rho, k-\rho, s-\left(\frac{1}{p}-\frac{1}{2} \right)\right\}
	\end{equation}
	are admissible in the sense of \link{parameter} and we have the continuous embedding
	\begin{equation*}
		B_{\Psi,p}^s(L_p(\partial\Omega)) \cap X_\rho^k(\partial\Omega) \hookrightarrow B_{\Psi,\tau}^\alpha(L_\tau(\partial\Omega)).
	\end{equation*}	
	That means, for all $u \in B_{\Psi,p}^s(L_p(\partial\Omega)) \cap X_\rho^k(\partial\Omega)$ it holds
	\begin{equation}\label{est_norm}
		\norm{u \sep B_{\Psi,\tau}^\alpha(L_\tau(\partial\Omega))}
		\lesssim \max\left\{\norm{u \sep B_{\Psi,p}^s(L_p(\partial\Omega))}, \norm{u \sep X_\rho^k(\partial\Omega)}\right\}.
	\end{equation}
\end{theorem}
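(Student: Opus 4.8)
The plan is to estimate the Besov-type quasi-norm $\norm{u \sep B_{\Psi,\tau}^\alpha(L_\tau(\partial\Omega))}$ directly via its wavelet description \link{besov_norm}, splitting the sum over wavelet indices $\xi\in\nabla_j^{\partial\Omega}$ at level $j$ into those wavelets whose support stays in the interior of a single patch (where the vanishing moment property \link{vanish} is available) and those whose support meets one of the patch interfaces $\bigcup_i\partial F_i$. For the interior wavelets, I would localize near the vertices via the special resolution of unity $(\varphi_n)_{n=1}^N$ and work on the cone faces $\tilde\Gamma^{n,t}$ in polar coordinates; by \link{vanish} and a Whitney-type estimate, $\abs{\distr{u}{\tilde\psi^{\partial\Omega}_{j,\xi}}}$ is controlled by the $L_2$-norm of a $k$th-order derivative of $u$ on a cube $Q_{j,\xi}$ of sidelength $\sim 2^{-j}$ around $\xi$, picking up a factor $2^{-jk}$ from the Taylor remainder and $2^{-j}$ from $\abs{\supp\tilde\psi^{\partial\Omega}_{j,\xi}}^{1/2}$. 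Crucially, \autoref{lem:weighted_Sobolev} then converts this into a bound involving the weighted quantity $\norm{\delta_{n,t}^{k-\rho}D^\alpha_y f_{n,t}\sep L_2}$, so each interior coefficient satisfies roughly $\abs{\distr{u}{\tilde\psi_{j,\xi}}}\lesssim 2^{-j(k+1)}\,\delta(\xi)^{-(k-\rho)}\,\norm{u\sep X_\rho^k}_{Q_{j,\xi}}$, where $\delta(\xi)$ is the distance of $\xi$ to the face boundary.

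Next I would carry out the summation. Grouping the interior wavelets at level $j$ by dyadic distance $\delta(\xi)\sim 2^{-\ell}$ to the interfaces (with $\ell$ ranging from $O(1)$ up to $\sim j$), the counting estimate \link{number_wavelets1} together with the finite-overlap property \link{finite_overlap} says there are $\sim 2^{j+\ell}$ such wavelets in that band (a one-dimensional strip of width $2^{-\ell}$ times $2^j$ along it, at resolution $2^{-j}$), each supported where $\delta_{n,t}\sim 2^{-\ell}$. Raising to the power $\tau$, using $\sum_\xi\norm{\cdot}_{Q_{j,\xi}}^2\lesssim\norm{\cdot}^2$ by finite overlap when $\tau\le 2$ (Hölder/$\ell^\tau\hookrightarrow\ell^2$ on each band costs a factor that is a power of the cardinality $2^{j+\ell}$), one is led to a double geometric series in $j$ and $\ell$ whose convergence is governed exactly by the three exponents $\rho$, $k-\rho$, and $s-(1/p-1/2)$ — the band near the edge ($\ell\sim j$) is controlled by $\rho$, the "bulk" band ($\ell=O(1)$) by $k-\rho$ (via $\delta^{k-\rho}$ absorbing the weight), and this is precisely why $\alpha<2\alpha^*$ with $\alpha^*=\min\{\rho,k-\rho,s-(1/p-1/2)\}$ forces both sums to converge. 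For the interface wavelets, where no vanishing moments help, I would instead use the given smoothness $u\in B_{\Psi,p}^s(L_p(\partial\Omega))$: there are only $\lesssim 2^j$ of them by the second bound in \link{number_wavelets1}, so a Hölder argument comparing the $\ell^\tau$-sum over these few indices to the $\ell^p$-sum weighted by $2^{js}$-type factors (exactly as in the proof of \autoref{prop:standard_emb}) yields a contribution bounded by $\norm{u\sep B_{\Psi,p}^s(L_p)}$, again provided $\alpha < 2(s-(1/p-1/2))$, which is ensured by $\alpha<2\alpha^*$.

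The main obstacle I anticipate is the bookkeeping in the interior estimate: making the Whitney/Taylor argument rigorous on the curved, patchwise geometry (transferring between $\partial\Omega$, the cones $\partial\Co_n$, the faces $\Gamma^{n,t}$, and their planar models $\tilde\Gamma^{n,t}$ via $\kappa_i$ and $R_{n,t}$), handling wavelets whose support meets a vertex neighborhood but still lies inside one patch, and — above all — correctly tracking how the weight $\delta_{n,t}^{k-\rho}$ interacts with the dyadic distance bands so that the two summations decouple cleanly. A secondary technical point is the case $p<\tau$ versus $p\ge\tau$ in the interface term and the endpoint $\tau\le 2$ constraint (which is where admissibility \link{parameter} with $q=\tau$ forces $q\le 2$), but this is routine once the band decomposition is set up. I would expect that the heavy lifting — the Whitney-type estimate combined with \autoref{lem:weighted_Sobolev}, and the counting lemma behind \link{number_wavelets1} — is exactly what the paper defers to \autoref{sect:appendixB}, so the body of the proof should reduce to assembling these ingredients and performing the geometric-series summation under condition \link{embedding_cond}.
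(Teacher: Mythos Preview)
Your proposal is correct and follows essentially the same route as the paper: split $\nabla_j^{\partial\Omega}$ into interior and interface wavelets, control the interface part by $\norm{u\sep B_{\Psi,p}^s(L_p(\partial\Omega))}$ via H\"older and the cardinality bound $\#\nabla_{j,\mathrm{bnd}}\lesssim 2^j$, and control the interior part by $\norm{u\sep X_\rho^k(\partial\Omega)}$ via vanishing moments, a Whitney estimate, and \autoref{lem:weighted_Sobolev}, grouping by distance to the face boundary. The only notable variation is that the paper indexes the distance bands linearly by $a\in\N$ with $a\,2^{-j}<\Delta_{j,\xi}^{n,t}\le(a+1)2^{-j}$ (so $\#\nabla_{j,\mathrm{int}}^{n,t}(a)\lesssim 2^j$ per band) rather than dyadically in $\ell$; this yields the cleaner convergent sum $\sum_a a^{2\tau(\rho-k)/(2-\tau)}$ and explains why your cardinality count $2^{j+\ell}$ and the extra $2^{-j}$ in your single-coefficient bound look slightly off, but these are bookkeeping slips that disappear once you adopt either parametrization consistently.
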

The stated embedding is illustrated by the left DeVore-Triebel diagram in \autoref{fig:DeVoreTriebel2}.
Therein the solid line corresponds to the target spaces $B_{\Psi,\tau}^\alpha(L_\tau(\partial\Omega))$ satisfying \link{embedding_cond} with $\min\{\rho,k-\rho\}\geq s$.
\begin{figure}[ht]
	\begin{center}
\begin{tikzpicture}
\draw [->,>=stealth'] (0,-0.05) -- (0,6) node[above] {$\alpha$};
\draw [->,>=stealth'] (-0.05,0) -- (6,0) node[right] {$\frac{1}{p}$};

\draw [thin] (1,0) -- (6.5,5.5);
\draw [thin] (1,0) -- (1,6);
\fill (1,0) circle (2pt) node[anchor=south east] {$L_2$};

\draw (-0.3,2.5) node {$s$};
\draw [dashed] (-0.05,2.5) -- (3.5,2.5);
\draw (1,0) -- (1,-0.05);
\draw (1,-0.5) node {$\frac{1}{2}$};
\fill (1,2.5) circle (2pt) node[anchor=south east] {$H^{s}$};
\fill (3.5,2.5) circle (2pt);

\draw (-0.3,5) node {$2s$};
\draw [dashed] (-0.05,5) -- (6,5);

\fill (6,5) circle (2pt);
\draw [->,>=stealth', dotted] (1,2.5) -- (5.8,4.9);

\fill (2,2.5) circle (2pt) node[anchor=north] {$B^s_{\Psi,p}(L_p)$};
\draw [dashed] (2,-0.05) -- (2,2.5);
\draw (2,-0.5) node {$\frac{1}{p}$};

\fill (6,5) circle (2pt);
\draw [->,>=stealth', dotted] (2,2.5) -- (4.8,3.9);

\draw (-0.3,4) node {$2\alpha^*$};
\draw [dashed] (-0.05,4) -- (5,4);
\fill (5,4) circle (2pt) node[anchor=north west] {$B^\alpha_{\Psi,\tau}(L_\tau)$};
\draw [dashed] (5,-0.05) -- (5,4);
\draw (5,-0.5) node {$\frac{1}{\tau}$};
\draw [very thick] (1,0) -- (5,4);
\end{tikzpicture}
		\hfill
\begin{tikzpicture}
\draw [->,>=stealth'] (0,-0.05) -- (0,6) node[above] {$\alpha$};
\draw [->,>=stealth'] (-0.05,0) -- (7,0) node[right] {$\frac{1}{p}$};

\shade [left color=gray!40, right color=gray!5, shading angle=180] (1,1)--(1,6.5)--(6.5,6.5)--(1,1);

\draw [thin] (1,0) -- (6.5,5.5);
\draw [thin] (1,0) -- (1,6);
\draw [thin] (1,1) -- (5.5,5.5);
\fill (1,0) circle (2pt) node[anchor=south east] {$L_2$};

\draw (-0.3,2.5) node {$s$};
\draw [dashed] (-0.05,2.5) -- (1,2.5);
\draw (1,0) -- (1,-0.05);
\draw (1,-0.5) node {$\frac{1}{2}$};
\fill (1,2.5) circle (2pt) node[anchor=south east] {$H^{s}$};

\draw (-0.3,1) node {$s'$};
\draw [dashed] (-0.05,1) -- (1,1);
\fill (1,1) circle (2pt) node[anchor=south east] {$H^{s'}$};

\draw (-0.3,5) node {$2s$};
\draw [dashed] (-0.05,5) -- (6,5);
\draw [dashed] (6,-0.05) -- (6,5);
\draw (6,-0.5) node {$\frac{1}{\tau}$};

\fill (6,5) circle (2pt) node[anchor=north west] {$B^\alpha_{\Psi,\tau}(L_\tau)$};
\draw [thick, dotted] (1,2.5) -- (4,4);
\draw [->,>=stealth', dotted] (4,4) -- (5.8,4.9);

\draw (-0.3,4) node {$\alpha_\theta$};
\draw [dashed] (-0.05,4) -- (4,4);
\fill (4,4) circle (2pt) node[anchor=south east] {$B^{\alpha_\theta}_{\Psi,p_\theta}(L_{p_\theta})$};
\draw [dashed] (4,-0.05) -- (4,4);
\draw (4,-0.5) node {$\frac{1}{p_\theta}$};
\end{tikzpicture}
	\end{center}
\caption{DeVore-Triebel diagrams indicating the results of \autoref{thm:embedding} (left) and \autoref{thm:general_eq} (right). In both cases we assumed $\min\{\rho,k-\rho\}\geq s$ such that we can take $\alpha = 2(s-(1/p-1/2))-\delta$ with arbitrary small $\delta>0$.}\label{fig:DeVoreTriebel2}
\end{figure}
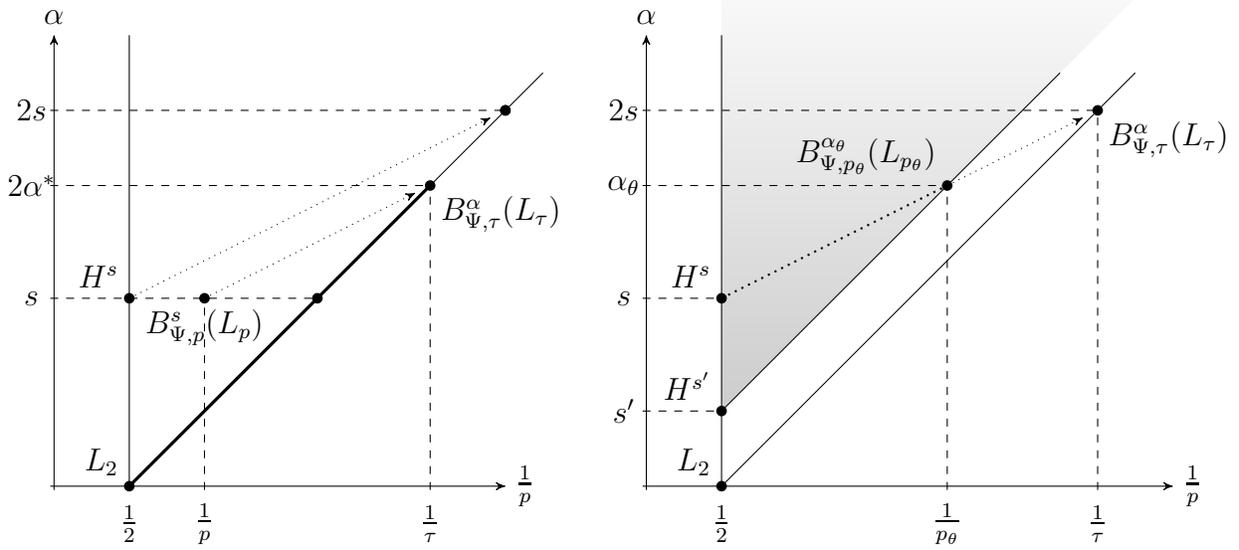

\begin{remark}
From the application point of view the case $p=2$ is of special interest since $B_{\Psi,2}^s(L_2(\partial\Omega)) = H^s(\partial\Omega)$, as long as $s$ is sufficiently small; cf.\ \autoref{rem:special_besov}(iii) in \autoref{sect:besov_def}.
In this case \autoref{thm:embedding} shows that functions~$u$ on the two-dimensional manifold~$\partial\Omega$ which have Sobolev regularity~$s$ possess a Besov regularity $\alpha$ that can be essentially twice as large as $s$, provided that they are sufficiently regular on the smooth parts of the surface (i.e., if $u\in X_\rho^k(\partial\Omega)$ with $\min\{\rho,k-\rho\} \geq s$). This resembles the gain of regularity by a factor of $d/(d-1)$ for functions on bounded Lipschitz domains $\Omega'\subset\R^d$ with additional properties.
For details we refer to~\cite{DD97}, \cite{Han2014}. 
\end{remark}

The proof of \autoref{thm:embedding} which is stated at the end of this subsection is inspired by ideas first given in~\cite{DD97}.
Since it clearly suffices to show \link{est_norm}, we need to estimate wavelet coefficients $\distr{u}{\tilde{\psi}^{\partial\Omega}_{j,\xi}}$ for all $j\geq j^*$ and $\xi \in {\nabla}_j^{\partial\Omega}$; see \link{besov_norm}.
We split our analysis into two parts: 
At first, we bound the contribution of the dual wavelets supported in the vicinity of patch boundaries (\emph{interface} wavelets) in terms of the (quasi-) norm of $u$ in $B_{\Psi,p}^s(L_p(\partial\Omega))$.
Afterwards we take into account all \emph{interior} (dual) wavelets~$\tilde{\psi}^{\partial\Omega}_{j,\xi}$, i.e., those which are completely supported in the interior of some patch $F_i \subset \partial\Omega$. 
Then, using a Whitney-type argument, the respective coefficients will be estimated by the norm of $u$ in the weighted Sobolev space $X^k_\rho(\partial\Omega)$.

We describe the mentioned splitting in detail.
Given $j\geq j^*$ and $i\in\{1,\ldots,I\}$ let ${\nabla}_{j,\mathrm{int}}^{F_i}$ denote the set of all $\xi \in {\nabla}_j^{F_i}$ such that there exist cubes $Q^\square_{j,\xi}\subset (0,1)^2$, as well as balls $B_{j,\xi} \subset F_i$, with the following properties:
\begin{enumerate}[label=(B\arabic{*}), ref=B\arabic{*}]
	\item \label{B1} $\supp \tilde{\psi}^{\partial\Omega}_{j,\xi} \subset \kappa_i(Q^\square_{j,\xi}) \subset B_{j,\xi} \subset F_i$,
	\item \label{B2} $\rad B_{j,\xi} \sim \abs{\supp \tilde{\psi}^{\partial\Omega}_{j,\xi}}^{1/2}$, and
	\item \label{B3} $\dist{B_{j,\xi}}{\partial F_i} > 2^{-j}$.
\end{enumerate}
Here $\rad B$ denotes the radius of the ball $B$.
Setting
\begin{equation*}
	{\nabla}_{j, \mathrm{int}} := \bigcup_{i=1}^I {\nabla}_{j, \mathrm{int}}^{F_i}, 
	\qquad \text{as well as} \qquad 
	{\nabla}_{j, \mathrm{bnd}} := \bigcup_{i=1}^I \left( {\nabla}_j^{F_i} \setminus {\nabla}_{j, \mathrm{int}}^{F_i} \right), 
\end{equation*}
we obviously have 
\begin{equation*}
	{\nabla}_j^{\partial\Omega} = \bigcup_{i=1}^I {\nabla}_j^{F_i} = {\nabla}_{j, \mathrm{int}} \cup {\nabla}_{j,\mathrm{bnd}}
	\qquad \text{for every} \qquad
	j\geq j^*.
\end{equation*}
Moreover, the properties of the underlying wavelet bases imply that the number of dual wavelets at level~$j$ with $\dist{\supp \tilde{\psi}^{\partial\Omega}_{j,\xi}}{\partial F_i} \lesssim 2^{-j}$ scales like $2^j$; see \link{number_wavelets1} in \autoref{ass:basis}. This implies the estimate $\# \left( {\nabla}_j^{F_i} \setminus {\nabla}_{j, \mathrm{int}}^{F_i} \right) \lesssim 2^j$ and hence
\begin{equation}\label{card_bnd}
	\# {\nabla}_{j, \mathrm{bnd}} \lesssim 2^j,
	\qquad \text{whereas} \qquad
	\# {\nabla}_{j, \mathrm{int}} \sim 2^{2j}.
\end{equation}

Now we are well-prepared to bound the wavelet coefficients associated to patch boundaries in terms of the (quasi-) norm of $u$ in the Besov-type space $B_{\Psi,p}^\alpha(L_p(\partial\Omega))$. 
For the proof we refer to \autoref{sect:appendixB}.
\begin{prop}\label{prop:bnd}
	Let $(s,p,p)$ denote an admissible tuple of parameters and assume that
	\begin{equation*}
		\frac{1}{2} \leq \frac{1}{\tau} \leq \frac{1}{p}
		\qquad \text{or} \qquad
		\frac{1}{p} < \frac{1}{\tau} < 1 - \frac{1}{p} + s.
	\end{equation*}	
	Then for all $u\in B_{\Psi,p}^s(L_p(\partial\Omega))$
	\begin{equation*}
		\sum_{j\geq j^*} \sum_{\xi \in {\nabla}_{j,\mathrm{bnd}}} \abs{\distr{u}{\tilde{\psi}^{\partial\Omega}_{j,\xi}}}^\tau 
		\lesssim \norm{u \sep B_{\Psi,p}^s(L_p(\partial\Omega))}^\tau.
	\end{equation*}
\end{prop}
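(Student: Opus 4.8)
The plan is to reduce the estimate to a sum over the $O(2^j)$ boundary wavelets at each level and then to exploit the fact that $\tau\le p$ together with the precise cardinality bound \link{card_bnd}. First I would split off the coarse-level term: by \link{normalized} and H\"older's inequality on the finite index set $\nabla_{j^*}^{\partial\Omega}$ the contribution of $P_{j^*-1}(u)$ is controlled by a fixed number of wavelet coefficients, hence by $\norm{u \sep B_{\Psi,p}^s(L_p(\partial\Omega))}$; so it suffices to estimate $\sum_{j\ge j^*}\sum_{\xi\in\nabla_{j,\mathrm{bnd}}}\abs{\distr{u}{\tilde{\psi}^{\partial\Omega}_{j,\xi}}}^\tau$. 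Fixing $j$, write $b_{j,\xi}:=\abs{\distr{u}{\tilde{\psi}^{\partial\Omega}_{j,\xi}}}$. If $\tau\le p$, H\"older's inequality with exponents $p/\tau$ and its conjugate, applied to the sequence $(b_{j,\xi})_{\xi\in\nabla_{j,\mathrm{bnd}}}$ against the constant sequence $1$, gives
\begin{equation*}
	\sum_{\xi\in\nabla_{j,\mathrm{bnd}}} b_{j,\xi}^\tau
	\le \left(\#\nabla_{j,\mathrm{bnd}}\right)^{1-\tau/p}\left(\sum_{\xi\in\nabla_{j,\mathrm{bnd}}} b_{j,\xi}^p\right)^{\tau/p}
	\lesssim 2^{j(1-\tau/p)}\left(\sum_{\xi\in\nabla_j^{\partial\Omega}} b_{j,\xi}^p\right)^{\tau/p},
\end{equation*}
using $\#\nabla_{j,\mathrm{bnd}}\lesssim 2^j$ from \link{card_bnd} and monotonicity in the last step.

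Next I would compare this with the level-$j$ block of the $B_{\Psi,p}^s(L_p)$-norm, which (by the definition of that space with $q=p$) is $2^{j(s+2[1/2-1/p])p}\sum_{\xi\in\nabla_j^{\partial\Omega}} b_{j,\xi}^p$. Writing $a_j:=\left(\sum_{\xi\in\nabla_j^{\partial\Omega}} b_{j,\xi}^p\right)^{1/p}$, we have $\sum_j 2^{j(s+2[1/2-1/p])p}a_j^p = \norm{u \sep B_{\Psi,p}^s(L_p)}^p<\infty$, and the bound above reads $\sum_{\xi\in\nabla_{j,\mathrm{bnd}}} b_{j,\xi}^\tau \lesssim 2^{j(1-\tau/p)}a_j^\tau$. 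So it remains to show
\begin{equation*}
	\sum_{j\ge j^*} 2^{j(1-\tau/p)}a_j^\tau \lesssim \left(\sum_{j\ge j^*} 2^{j(s+1-2/p)p}a_j^p\right)^{\tau/p}.
\end{equation*}
Pull out the weight from the right-hand side and apply H\"older in $j$ with exponents $p/\tau$ and $(p/\tau)'=p/(p-\tau)$: setting $c_j:=2^{j(s+1-2/p)}a_j$, one has $a_j^\tau = 2^{-j(s+1-2/p)\tau}c_j^\tau$, so
\begin{equation*}
	\sum_{j} 2^{j(1-\tau/p)} a_j^\tau
	= \sum_j 2^{j(1-\tau/p - (s+1-2/p)\tau)} c_j^\tau
	\le \left(\sum_j 2^{\frac{j(1-\tau/p-(s+1-2/p)\tau)}{1-\tau/p}}\right)^{1-\tau/p}\left(\sum_j c_j^p\right)^{\tau/p},
\end{equation*}
and the second factor is exactly $\norm{u \sep B_{\Psi,p}^s(L_p)}^\tau$. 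The geometric series in the first factor converges precisely when the exponent is negative, i.e.\ when $1-\tau/p-(s+1-2/p)\tau<0$, which after dividing by $\tau$ and rearranging is $1/\tau < 1/p - 1/2 + (1/p+s-1/2)\cdot\tfrac{?}{}$ — I would simplify this carefully, but the resulting condition is exactly $1/\tau<1-1/p+s$ under the regime $1/p<1/\tau$, matching the second hypothesis of the proposition. In the complementary regime $1/2\le 1/\tau\le 1/p$ one has $\tau\ge p$, and then one argues even more directly: by the embedding $\ell_p\hookrightarrow\ell_\tau$ (since $\tau\ge p$) on each level, $\sum_{\xi\in\nabla_{j,\mathrm{bnd}}} b_{j,\xi}^\tau\le a_j^\tau$, and since $s>0$ and the target weight exponent $\alpha+2[1/2-1/\tau]=0$ is dominated by $s+2[1/2-1/p]\ge s>0$ (as $1/\tau\le 1/p$), the sum $\sum_j a_j^\tau$ is controlled by $\sum_j 2^{j(s+1-2/p)\tau}a_j^\tau$, which by $\ell_p\hookrightarrow\ell_\tau$ in $j$ is at most $\norm{u \sep B_{\Psi,p}^s(L_p)}^\tau$.

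The main obstacle I anticipate is bookkeeping the two parameter regimes and verifying that the convergence condition for the geometric series in $j$ comes out to be exactly $1/\tau<1-1/p+s$ rather than something slightly stronger; this is where the constant $1$ in the cardinality bound $\#\nabla_{j,\mathrm{bnd}}\lesssim 2^j$ (as opposed to $2^{2j}$) is essential and must be used without slack. A secondary subtlety is the case $p\ge 1$ versus $p<1$: for $p<1$ the triangle inequality fails and H\"older must be replaced by the quasi-norm inequality $\sum|x_i|^\tau\le(\sum|x_i|)^\tau$ type estimates or by working with $\ell_p$-quasi-norms directly, but since admissibility forces $1/p\le\alpha/2+1/2$ and here we only use $\tau\le p$ or $\tau\ge p$ on finite or geometrically weighted sums, the standard embedding inequalities between $\ell_p$ spaces remain valid for $0<p\le\infty$ and the argument goes through unchanged. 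I would also remark that $q=p$ (rather than general $q$) in the source space is what makes the double-index H\"older argument clean; this is exactly the hypothesis $(s,p,p)$ in the statement.
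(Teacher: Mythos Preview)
Your approach matches the paper's proof closely: the case $1/p<1/\tau$ is handled in both by H\"older on each level (using $\#\nabla_{j,\mathrm{bnd}}\lesssim 2^j$) followed by H\"older in $j$, and the geometric-series condition indeed reduces to $1/\tau<1-1/p+s$ exactly as you anticipate. For the complementary case $1/2\le 1/\tau\le 1/p$ the paper simply extends the boundary sum to all of $\nabla_j^{\partial\Omega}$ and invokes the standard embedding of \autoref{prop:standard_emb} (with $\alpha:=2/\tau-1$), which amounts to the same $\ell_p\hookrightarrow\ell_\tau$ argument you sketch.

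Two minor points to clean up. First, there is no coarse-level term to split off: the left-hand side of the proposition involves only wavelet coefficients for $j\ge j^*$, so that paragraph is superfluous. Second, in the regime $1/\tau\le 1/p$ your parenthetical justification ``$s+2[1/2-1/p]\ge s>0$ (as $1/\tau\le 1/p$)'' is backwards---admissibility forces $1/p\ge 1/2$, so $s+2[1/2-1/p]\le s$. What you actually need (and have) is $s+2[1/2-1/p]\ge 0$, which is precisely the admissibility condition $1/p\le s/2+1/2$; with this the inequality $\sum_j a_j^\tau\le \sum_j 2^{j(s+1-2/p)\tau}a_j^\tau$ goes through and your $\ell_p\hookrightarrow\ell_\tau$ step in $j$ finishes the argument.
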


In addition, the contribution of the interior wavelets to the (quasi-) norm of $u$ measured in $B_{\Psi,\tau}^\alpha(L_\tau(\partial\Omega))$, $1/\tau=\alpha/2+1/2$, can be estimated as follows; again a detailed proof is given in \autoref{sect:appendixB}.
\begin{prop}\label{prop:int}
	Let $\tilde{d}\in\N$,
	\begin{equation*}
		k\in\{1,2,\ldots,\tilde{d}\}, \qquad \rho\in(0,k), \qquad \text{and} \qquad
		\frac{1}{2} \leq \frac{1}{\tau} < \frac{1}{2} + \min\{\rho, k-\rho\}.
	\end{equation*}	
	Then for all $u\in X_\rho^k(\partial\Omega)$
	\begin{equation}\label{est_int}
		\sum_{j\geq j^*} \sum_{\xi \in {\nabla}_{j,\mathrm{int}}} \abs{\distr{u}{\tilde{\psi}^{\partial\Omega}_{j,\xi}}}^\tau 
		\lesssim \norm{u \sep X_\rho^k(\partial\Omega)}^\tau.
	\end{equation}
\end{prop}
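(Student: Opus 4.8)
The plan is to estimate each interior wavelet coefficient $\distr{u}{\tilde\psi^{\partial\Omega}_{j,\xi}}$ by exploiting the vanishing moment property~\link{vanish} together with the weighted Sobolev regularity of $u$, and then to sum the resulting bounds over all $\xi\in\nabla_{j,\mathrm{int}}$ and all $j\geq j^*$. Fix $i\in\{1,\ldots,I\}$ and $\xi\in\nabla^{F_i}_{j,\mathrm{int}}$, and pull everything back to the unit square via $\kappa_i$. Since $\tilde\psi^{\partial\Omega}_{j,\xi}$ is supported in $\kappa_i(Q^\square_{j,\xi})$ with $Q^\square_{j,\xi}\subset(0,1)^2$ a cube of sidelength $\sim 2^{-j}$, property~\link{vanish} lets us subtract from $u\circ\kappa_i$ an arbitrary polynomial $\P\in\Pi_{\tilde d-1}([0,1]^2)$ without changing the coefficient. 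Choosing $\P$ to be a near-best polynomial approximation (a Whitney/averaged Taylor polynomial) to $u\circ\kappa_i$ on $Q^\square_{j,\xi}$, we obtain, by Hölder's inequality and the normalization~\link{normalized},
\begin{equation*}
	\abs{\distr{u}{\tilde\psi^{\partial\Omega}_{j,\xi}}}
	= \abs{\distr{u\circ\kappa_i - \P}{\tilde\psi^{\partial\Omega}_{j,\xi}\circ\kappa_i}_\square}
	\lesssim \norm{u\circ\kappa_i - \P \sep L_2(Q^\square_{j,\xi})},
\end{equation*}
and the classical Whitney estimate bounds the right-hand side by $2^{-jk}$ times the $L_2$-norm of the $k$th order derivatives of $u\circ\kappa_i$ over $Q^\square_{j,\xi}$ (using $k\leq\tilde d$).

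Next I would convert this into a weighted bound. By \autoref{lem:weighted_Sobolev} (applied facewise via the reparametrizations $R_{n,t}$, after localizing with the special resolution of unity — this is where the geometry enters), the $k$th derivatives of $u$ are controlled by $\delta_{n,t}^{-(k-\rho)}$ times the weighted norm, where $\delta_{n,t}$ is the truncated distance to the face boundary. On the cube $Q^\square_{j,\xi}$, which by~\link{B3} sits at distance $>2^{-j}$ from $\partial F_i$, the weight $\delta_{n,t}$ is comparable to a constant $d_{j,\xi}:=\dist{B_{j,\xi}}{\partial F_i}$ (up to the truncation at $C_2$), so we may pull it out:
\begin{equation*}
	\abs{\distr{u}{\tilde\psi^{\partial\Omega}_{j,\xi}}}
	\lesssim 2^{-jk}\, d_{j,\xi}^{-(k-\rho)} \norm{\delta^{\,k-\rho}_{n,t} D^\alpha_y (\varphi_n u)_{n,t} \sep L_2(\text{loc. cube})}.
\end{equation*}
Summing the $\tau$-th powers of these bounds over $\xi\in\nabla^{F_i}_{j,\mathrm{int}}$ amounts, after using the finite-overlap property~\link{finite_overlap}, to a discrete weighted sum $\sum_{\xi} 2^{-jk\tau} d_{j,\xi}^{-(k-\rho)\tau} a_{j,\xi}$, where $\sum_\xi a_{j,\xi}\lesssim\norm{u\sep X^k_\rho(\partial\Omega)}^2$ and the geometric weight has to be split off by Hölder's inequality with exponents $2/\tau$ and $2/(2-\tau)$.

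The crucial counting step — and the main obstacle — is then to show that the purely geometric series
\begin{equation*}
	\sum_{j\geq j^*}\ \sum_{\xi\in\nabla^{F_i}_{j,\mathrm{int}}} \Bigl( 2^{-jk}\, d_{j,\xi}^{-(k-\rho)} \Bigr)^{\frac{2\tau}{2-\tau}}
\end{equation*}
converges. Here one groups the interior wavelets at level $j$ into dyadic shells according to $d_{j,\xi}\sim 2^{-\l}$ for $j^*\leq\l\leq j$; by the cardinality estimate implicit in~\link{number_wavelets1} (number of level-$j$ wavelets within distance $2^{-\l}$ of $\partial F_i$ is $\sim 2^{j}\cdot 2^{j-\l}$, i.e.\ the shell has $\sim 2^{2j-\l}$ elements) the inner double sum over $j$ and $\l$ becomes a geometric-type series in $2^j$ and $2^\l$ whose convergence is governed precisely by the two conditions $1/\tau<1/2+\rho$ and $1/\tau<1/2+(k-\rho)$ — the first controls the $\l$-summation near the boundary, the second (together with the $2^{-jk}$ decay) controls the $j$-summation. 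Once this is verified and the endpoint exponent relation $1/\tau=\alpha/2+1/2$ is used to match powers, combining with the Hölder split yields~\link{est_int}. I expect the bookkeeping of the weight exponents under Hölder's inequality, and checking that the two-parameter geometric sum indeed converges exactly under the stated hypothesis on $1/\tau$, to be the technically delicate part; everything else is a fairly standard Whitney/vanishing-moments argument.
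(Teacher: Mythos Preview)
Your approach matches the paper's: localize via the resolution of unity $(\varphi_n)$, combine the vanishing moments with a Whitney estimate to obtain the single-coefficient bound $\abs{\distr{\varphi_n u}{\tilde\psi^{\partial\Omega}_{j,\xi}}}\lesssim 2^{-jk}(\Delta_{j,\xi}^{n,t})^{\rho-k}\norm{\delta_{n,t}^{k-\rho}D^\alpha(\varphi_n u)_{n,t}\sep L_2}$, then split by H\"older with exponents $2/\tau$ and $2/(2-\tau)$ and control the remaining geometric shell-sum. The only differences are organizational --- the paper groups interior wavelets into \emph{linear} shells $a\,2^{-j}<\Delta_{j,\xi}^{n,t}\leq(a{+}1)2^{-j}$ (each of cardinality $\lesssim 2^j$) instead of your dyadic shells $d_{j,\xi}\sim 2^{-\ell}$, and in that bookkeeping it is the condition $1/\tau<1/2+(k-\rho)$ that governs the shell-sum while $1/\tau<1/2+\rho$ governs the sum over $j$, the reverse of what you wrote.
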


Combining both the previous bounds the proof of \autoref{thm:embedding} is straightforward:
\begin{proof}[Proof (\autoref{thm:embedding})]
Using the definition of the (quasi-) norm in $B_{\Psi,\tau}^\alpha(L_\tau(\partial\Omega))$ given in \link{besov_norm} together with the introduced splittings of the index sets $\nabla_j^{\partial\Omega}$, and \autoref{lem:finite} from \autoref{sect:appendixA}, we may write
\begin{eqnarray*}
	&&\norm{u \sep B_{\Psi,\tau}^\alpha(L_\tau(\partial\Omega))} \\
	&&\quad \sim \norm{P_{j^*-1}(u) \sep L_\tau(\partial\Omega)} 
		+ \left[ \sum_{j\geq j^*} \sum_{\xi \in {\nabla}_{j, \mathrm{bnd}}} \abs{\distr{u}{\tilde{\psi}^{\partial\Omega}_{j,\xi}}}^\tau \right]^{1/\tau}
		+ \left[ \sum_{j\geq j^*} \sum_{\xi \in {\nabla}_{j, \mathrm{int}}} \abs{\distr{u}{\tilde{\psi}^{\partial\Omega}_{j,\xi}}}^\tau \right]^{1/\tau}.
\end{eqnarray*}
H\"older's inequality in conjunction with \autoref{prop:standard_emb} then implies the estimate
\begin{equation*}
	\norm{P_{j^*-1}(u) \sep L_\tau(\partial\Omega)}
	\lesssim \norm{P_{j^*-1}(u) \sep L_2(\partial\Omega)}
	\leq \norm{u \sep B_{\Psi,2}^0(L_2(\partial\Omega))}
	\lesssim \norm{u \sep B_{\Psi,p}^s(L_p(\partial\Omega))}.
\end{equation*}
Furthermore, \autoref{prop:bnd} and \autoref{prop:int} above allow to bound the remaining terms by $\norm{u \sep B_{\Psi,p}^s(L_p(\partial\Omega))}$ and $\norm{u \sep X_\rho^k(\partial\Omega)}$, respectively.
This proves \link{est_norm}.
\end{proof}

\begin{remark}
Let us conclude this subsection by some final remarks:
\begin{itemize}
\item[(i)] Note that \autoref{prop:bnd} and \autoref{prop:int} can be reformulated as continuity assertions of certain linear projectors $P_\mathrm{int}$ and $P_{\mathrm{bnd}}$ which split up $u$ into a regular part and a singular part near patch interfaces, respectively.
\item[(ii)] We stress that our proof technique can also be used to derive more general embedding theorems where the fine-tuning parameters $q$ of the Besov-type spaces involved not necessarily coincide with their integrability parameters $p$ and $\tau$, respectively.
\end{itemize} 
\end{remark}

\subsection{Besov regularity of general operator equations on manifolds}\label{sect:general_eq}
Given an operator $S$ and a right-hand side $g\colon \partial\Omega\nach \C$ we like to solve the equation
\begin{equation}\label{OperatorS}
	S(u)=g	\quad \text{on} \quad \partial\Omega
\end{equation}
for $u\colon\partial\Omega\nach\C$.
In particular, we are interested in the asymptotic behavior of the error of best $n$--term wavelet approximation to $u$ measured in the norm of $H^{s'}(\partial\Omega)$ for some $s' \geq 0$.

\begin{theorem}\label{thm:general_eq}
	Assume $\tilde{d}\in\N$, $k\in\{1,2,\ldots,\tilde{d}\}$, as well as $\rho\in(0,k)$, and let $(s,p,p)$ be an admissible tuple of  parameters with $s>0$.
	Whenever the solution $u$ to \link{OperatorS} is contained in the intersection of $B_{\Psi,p}^s(L_p(\partial\Omega))$ and $X^k_\rho(\partial\Omega)$, then it also belongs to the Besov-type space $B_{\Psi,\tau}^\alpha(L_\tau(\partial\Omega))$ for all
	tuples $(\alpha,\tau,\tau)$ with
	\begin{equation*}
		\frac{1}{\tau} = \frac{\alpha}{2}+\frac{1}{2}
		 \quad\, \text{and} \quad\,
		 0 \leq \alpha < 2 \alpha^*, 
		 \quad\, \text{where} \quad\,
		\alpha^* = \min\!\left\{\rho, k-\rho, s-\left(\frac{1}{p}-\frac{1}{2} \right)\right\}.
	\end{equation*}	
	Moreover, for every $0 \leq s' < \min\!\left\{ 3/2, s_{\partial\Omega} \right\}$ satisfying
	\begin{equation}\label{bound_s}
		s-s' \geq 2 \left( \frac{1}{p} - \frac{1}{2}  \right)
	\end{equation}
	we have $\sigma_n\!\left( u; \Psi^{\partial\Omega}, H^{s'}(\partial\Omega) \right) \lesssim n^{-\gamma/2}$, as $n\nach\infty$, for all $\gamma < \gamma^*$, where
	\begin{equation*}
		\gamma^* := s-s' + \Theta \cdot (2\alpha^* - s)	\geq 0
		\qquad \text{and} \qquad
		\Theta := 1- \frac{s'}{s-2 \left( 1/p - 1/2 \right)} \in [0,1].
	\end{equation*}
\end{theorem}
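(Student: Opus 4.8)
The first assertion is immediate: it is precisely \autoref{thm:embedding} applied to the solution $u$. So everything below concerns the best $n$--term estimate, for which I shall use throughout the identification $H^{s'}(\partial\Omega)=B_{\Psi,2}^{s'}(L_2(\partial\Omega))$ (valid for $0\le s'<\min\{3/2,s_{\partial\Omega}\}$ by \autoref{rem:special_besov}(iii)), so that $\sigma_n(u;\Psi^{\partial\Omega},H^{s'}(\partial\Omega))$ coincides, up to constants, with the best $n$--term error of $u$ in the Besov-type space $B_{\Psi,2}^{s'}(L_2(\partial\Omega))$.

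The plan is to interpolate and then invoke \autoref{prop:nterm}. By hypothesis $u\in B_{\Psi,p}^s(L_p(\partial\Omega))$, and by \autoref{thm:embedding} we also have $u\in B_{\Psi,\tau}^\alpha(L_\tau(\partial\Omega))$ for every $\alpha\in[0,2\alpha^*)$ with $1/\tau=\alpha/2+1/2$. Fixing such an $\alpha$ and applying \autoref{prop:interpol} to the admissible tuples $(s,p,p)$ and $(\alpha,\tau,\tau)$ (both with finite fine index), we get, for each $\theta\in(0,1)$, that $u\in B_{\Psi,p_\theta}^{\alpha_\theta}(L_{p_\theta}(\partial\Omega))$ with fine index $q_\theta=p_\theta$, where
\begin{equation*}
	\alpha_\theta=(1-\theta)s+\theta\alpha,
	\qquad
	\frac{1}{p_\theta}=\frac{1-\theta}{p}+\frac{\theta}{\tau}
	=\frac12+(1-\theta)\left(\frac1p-\frac12\right)+\frac{\theta\,\alpha}{2},
\end{equation*}
and $(\alpha_\theta,p_\theta,p_\theta)$ is admissible.

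Next I would plug this intermediate space into \autoref{prop:nterm} as the ``smoothness'' space, with $H^{s'}(\partial\Omega)=B_{\Psi,2}^{s'}(L_2(\partial\Omega))$ as the target. Setting $\gamma_\theta:=\alpha_\theta-s'$ and using $1/p_\theta-1/2\ge0$, a one-line computation shows that the strict requirement $\gamma_\theta>2\max\{0,1/p_\theta-1/2\}$ of \autoref{prop:nterm} is equivalent to $(1-\theta)\bigl(s-2(1/p-1/2)\bigr)>s'$, i.e.\ to $\theta<\Theta$ where $\Theta:=1-s'/\bigl(s-2(1/p-1/2)\bigr)$ --- this is exactly where the hypothesis \link{bound_s} enters, guaranteeing $\Theta\in[0,1]$. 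Hence for $\theta\in(0,\Theta)$ \autoref{prop:nterm} gives $\sigma_n(B_{\Psi,p_\theta}^{\alpha_\theta}(L_{p_\theta}(\partial\Omega));\Psi^{\partial\Omega},H^{s'}(\partial\Omega))\sim n^{-\gamma_\theta/2}$, and the positive homogeneity of $\sigma_n(\,\cdot\,)$ then yields
\begin{equation*}
	\sigma_n\!\left(u;\Psi^{\partial\Omega},H^{s'}(\partial\Omega)\right)
	\lesssim \norm{u\sep B_{\Psi,p_\theta}^{\alpha_\theta}(L_{p_\theta}(\partial\Omega))}\,n^{-\gamma_\theta/2},
	\qquad n\to\infty.
\end{equation*}

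It remains to optimise over $\theta$ and $\alpha$ (tacitly assuming $\Theta>0$). Since $\gamma_\theta=(s-s')+\theta(\alpha-s)$, in the interesting case $2\alpha^*>s$ the exponent grows with $\theta$ once $\alpha>s$, so driving $\theta\uparrow\Theta$ and $\alpha\uparrow2\alpha^*$ makes $\gamma_\theta$ approach $\gamma^*=(s-s')+\Theta(2\alpha^*-s)$ from below; thus for every $\gamma<\gamma^*$ one finds an admissible pair $(\theta,\alpha)$ with $\gamma_\theta>\gamma$, whence $\sigma_n(u;\Psi^{\partial\Omega},H^{s'}(\partial\Omega))\lesssim n^{-\gamma/2}$. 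If instead $2\alpha^*\le s$, the choice $\theta\downarrow0$ delivers the bound for all $\gamma<s-s'$, hence for all $\gamma<\gamma^*$ since then $\gamma^*\le s-s'$. The two borderline situations where $\Theta=0$ (i.e.\ $s-s'=2(1/p-1/2)$ with $s'>0$, so $\gamma^*=s-s'$) or $s'=0=s-2(1/p-1/2)$ (target $L_2$, $\gamma^*=2\alpha^*$) are dispatched directly: the former via $B_{\Psi,p}^s(L_p(\partial\Omega))\hookrightarrow H^{s'}(\partial\Omega)$ from \autoref{prop:standard_emb} together with \autoref{prop:nterm}, the latter from \autoref{thm:embedding} together with \autoref{prop:nterm}. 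I do not anticipate a genuine obstacle: the analytic substance lies entirely in \autoref{thm:embedding}, and what remains is interpolation-plus-approximation bookkeeping --- the only points requiring care being the admissibility of the interpolated parameter tuples (supplied by \autoref{prop:interpol}) and the elementary arithmetic converting \link{bound_s} into $\theta<\Theta$.
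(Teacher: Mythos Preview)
Your proposal is correct and follows essentially the same approach as the paper: the first assertion is \autoref{thm:embedding}, and the approximation rate is obtained by interpolating between $B_{\Psi,p}^s(L_p(\partial\Omega))$ and $B_{\Psi,\tau}^\alpha(L_\tau(\partial\Omega))$ via \autoref{prop:interpol} and then invoking \autoref{prop:nterm}. The only cosmetic difference is that the paper interpolates at the exact value $\theta=\Theta$ (which places the interpolated space on the line $1/p_\Theta=(\alpha_\Theta-s')/2+1/2$, so the \emph{borderline} case of \autoref{prop:nterm} applies and only $\alpha$ needs to be varied), whereas you interpolate at $\theta<\Theta$ to stay in the strict-inequality regime and then optimise over both $\theta$ and $\alpha$; this is bookkeeping, not a different idea.
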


\begin{remark}
The second DeVore-Triebel diagram in \autoref{fig:DeVoreTriebel2} illustrates a special case of \autoref{thm:general_eq}. There we have chosen $p=2$ and $0<s'<s<\min\{\rho,k-\rho,3/2,s_{\partial\Omega}\}$, such that particularly $B_{\Psi,p}^s(L_p(\partial\Omega))=H^s(\partial\Omega)$ and $\alpha^*=s$.
The dotted line corresponds to the scale of spaces $B_{\Psi,p_\Theta}^{\alpha_{\Theta}}(L_{p_\Theta}(\partial\Omega))$ which can be reached by complex interpolation of $H^s(\partial\Omega)$ and $B_{\Psi,\tau}^\alpha(L_\tau(\partial\Omega))$; see \autoref{prop:interpol}. 
Interpolation is necessary since only those spaces which belong to the shaded area can be embedded into $H^{s'}(\partial\Omega)$. 
For this special choice of the parameters we obtain that $u\in H^s(\partial\Omega) \cap X^k_\rho(\partial\Omega)$ can be approximated in the norm of $H^{s'}(\partial\Omega)$ at a rate arbitrarily close to $\gamma^*/2 = s-s'$, whereas the rate of convergence for best $n$--term wavelet approximation to an arbitrary function $u\in H^s(\partial\Omega)$ is $(s-s')/2$; see \autoref{prop:nterm}.
Hence, incorporating the additional knowledge about weighted Sobolev regularity (membership in $X^k_\rho(\partial\Omega)$) allows to improve the rate of convergence up to a factor of two. 
\end{remark}

\begin{proof}[Proof (\autoref{thm:general_eq})]
We begin with some obvious observations.
Since membership of $u$ in $B_{\Psi,\tau}^\alpha(L_\tau(\partial\Omega))$ for all $\alpha\in[0,2\alpha^*)$ directly follows from the embedding stated in \autoref{thm:embedding} we only need to show the assertion concerning the rate of best $n$--term wavelet approximation. 
We moreover observe that admissibility of $(s,p,p)$ together with \link{bound_s} implies the range for $\Theta$ using the convention that $0/0:=0$.

Let $\alpha \in [0,2\alpha^*)$ be fixed and assume $s'$ to satisfy the mentioned restrictions.
If $0<s'<s - 2 \left( 1/p - 1/2 \right)$, then $\Theta \in (0,1)$. 
Since $u$ belongs to $B_{\Psi,p}^s(L_p(\partial\Omega)) \cap B_{\Psi,\tau}^\alpha(L_\tau(\partial\Omega))$ it is included in the interpolation space
$\left[ B_{\Psi,p}^s(L_p(\partial\Omega)), B_{\Psi,\tau}^\alpha(L_\tau(\partial\Omega)) \right]_{\Theta}$,
too. 
\autoref{prop:interpol} now yields that the latter space equals $B_{\Psi,p_\Theta}^{\alpha_\Theta}(L_{p_{\Theta}}(\partial\Omega))$ with 
\begin{equation}\label{interpol_para}
	\alpha_\Theta=(1-\Theta)s+\Theta\alpha
	\qquad \text{and} \qquad
	1/p_\Theta=(1-\Theta)/p+\Theta/\tau.
\end{equation}
In the limiting cases $s'=0$ and $s'=s - 2 \left( 1/p - 1/2 \right)$ which correspond to $\Theta=1$ and $\Theta=0$, respectively, the space $B_{\Psi,p_\Theta}^{\alpha_\Theta}(L_{p_{\Theta}}(\partial\Omega))$ simply reduces to $B_{\Psi,\tau}^\alpha(L_\tau(\partial\Omega))$ resp. $B_{\Psi,p}^s(L_p(\partial\Omega))$.
Hence, for every $s'$ under consideration we have
\begin{equation*}
	u \in B_{\Psi,p_\Theta}^{\alpha_\Theta}(L_{p_{\Theta}}(\partial\Omega)),
\end{equation*}
where $\alpha_\Theta$ and $p_\Theta$, given by \link{interpol_para}, depend on the particular choice of $\alpha$.
Straightforward computation shows that
\begin{equation*}
	\frac{1}{p_\Theta} = \frac{\alpha_{\Theta}-s'}{2} + \frac{1}{2}
\end{equation*}
and that $\gamma_{\alpha} := \alpha_{\Theta}-s' = s - s' + \Theta \cdot (\alpha - s)$ 
is non-negative for every $\alpha \in [0,2 \alpha^*)$.

Note that $0\leq s' < \min\{3/2,s_{\partial\Omega}\}$ ensures that we can identify $H^{s'}(\partial\Omega)$ with the Besov-type space $B_{\Psi,2}^{s'}(L_2(\partial\Omega))$.
Moreover, it is well-known that $\sigma_n$ as defined in \link{def_sigma} is homogeneous in the first argument.
Consequently \autoref{prop:nterm} gives
\begin{eqnarray*}
	\sigma_n\!\left( u; \Psi^{\partial\Omega}, H^{s'}(\partial\Omega) \right) 
	 &\leq & \sigma_n\!\left(B_{\Psi,p_\Theta}^{\alpha_\Theta}(L_{p_{\Theta}}(\partial\Omega)); \Psi^{\partial\Omega}, B_{\Psi,2}^{s'}(L_2(\partial\Omega))\right) \cdot \norm{u \sep B_{\Psi,p_\Theta}^{\alpha_\Theta}(L_{p_{\Theta}}(\partial\Omega))} \\
	&\lesssim & n^{-\gamma_{\alpha}/2},
\end{eqnarray*}
as $n\nach\infty$, where $\Psi^{\partial\Omega}$ denotes the system of generators and primal wavelets.
Since $\alpha \in [0,2 \alpha^*)$ was arbitrary we conclude $\sigma_n\!\left( u; \Psi^{\partial\Omega}, H^{s'}(\partial\Omega) \right) \lesssim n^{-\gamma/2}$ for all $\gamma < \gamma^*$ as claimed.
\end{proof}

\subsection{Double layer potential of the Laplacian}\label{sect:doublelayer}
Let $\sigma$ denote the canonical surface measure on the patchwise smooth boundary $\partial\Omega$ of $\Omega\subset\R^3$.
Since this surface is assumed to be Lipschitz, for $\sigma$-a.e.\ $x\in\partial\Omega$ there exists the outward pointing normal vector $\eta(x)$. By $\partial/\partial\eta(x)$ we denote the corresponding \emph{conormal derivative}.
Then the \emph{harmonic double layer potential} on $\overline{\Omega}$ is given by
\begin{equation*}
	v \mapsto K (v) 
	:= \frac{1}{4\pi} \int_{\partial\Omega} v(x) \, \frac{\partial}{\partial \eta(x)} \frac{1}{\abs{x-\cdot\,}_2} \d\sigma(x).
\end{equation*}
We are interested in the solution $u$ to the second kind integral equation
\begin{equation}\label{DoubleLayerProb}
	S_{\mathrm{DL}}(v) 
	:=	\left( \frac{1}{2} \, \Id - K \right)\!(v)
	= g
	\quad \text{on} \quad \partial\Omega
\end{equation}
which naturally arises from the so-called \emph{indirect method} for Dirichlet problems for Laplace's equation in $\Omega$.
Therein $\Id$ denotes the identical mapping and $S_\mathrm{DL}$ is known as \emph{double layer operator}.
For details and further references see, e.g., \cite[Chapter 3.4]{SS11}, as well as \cite{DK87}, \cite{JK81}, \cite{JK95}, \cite{K94}, \cite{V84}.

\begin{theorem}\label{thm:doublelayer}
	Let $s\in(0,1)$, as well as $k\in\N$, and $\rho\in(0,\min\{\rho_0,k\})$ for some $\rho_0 \in (1, 3/2)$ depending on the surface $\partial\Omega$. 
	Moreover let $\alpha$ and $\tau$ be given such that
	\begin{equation*}
		\frac{1}{\tau} = \frac{\alpha}{2}+\frac{1}{2}
		 \quad\, \text{and} \quad\,
		 0 \leq \alpha < 2 \cdot \min\{\rho, k-\rho, s\}
	\end{equation*}	
	and let the Besov-type space $B_{\Psi,\tau}^\alpha(L_\tau(\partial\Omega))$ be constructed with the help of a wavelet basis $\Psi=(\Psi^{\partial\Omega},\tilde{\Psi}^{\partial\Omega})$ possessing vanishing moments of order $\tilde{d} \geq k$. 
Then for every right-hand side $g\in H^s(\partial\Omega)\cap X_\rho^k(\partial\Omega)$ the double layer equation \link{DoubleLayerProb} has a unique solution $u\in B_{\Psi,\tau}^\alpha(L_\tau(\partial\Omega))$.
Furthermore, if $s'\in[0,s]$, then the error of the best $n$--term wavelet approximation to $u$ in the norm of $H^{s'}(\partial\Omega)$ satisfies
\begin{equation*}
	\sigma_n\!\left( u; \Psi^{\partial\Omega}, H^{s'}(\partial\Omega) \right) \lesssim n^{-\gamma/2}
	\qquad \text{for all} \qquad
	\gamma < 2 \cdot \left(1-\frac{s'}{s} \right) \cdot \min\{\rho,k-\rho,s\}.
\end{equation*}
\end{theorem}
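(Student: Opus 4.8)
The plan is to obtain \autoref{thm:doublelayer} as a special case of \autoref{thm:general_eq} applied to the operator $S=S_{\mathrm{DL}}$ with $p=2$. Accordingly, the substance of the argument lies in verifying the hypotheses of \autoref{thm:general_eq}, namely that \link{DoubleLayerProb} is uniquely solvable and that its solution lies in $B_{\Psi,2}^s(L_2(\partial\Omega))\cap X_\rho^k(\partial\Omega)$. To prepare the ground I would first invoke \autoref{rem:special_besov}(iii): since $0<s<1\le s_{\partial\Omega}$ we have $s<\min\{3/2,s_{\partial\Omega}\}$, hence $B_{\Psi,2}^s(L_2(\partial\Omega))=H^s(\partial\Omega)$ with equivalent norms, and likewise $B_{\Psi,2}^{s'}(L_2(\partial\Omega))=H^{s'}(\partial\Omega)$ for $0\le s'\le s$. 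Thus throughout one may work with classical Sobolev spaces on the smooth scale, and the requirement $\tilde d\ge k$ is exactly what makes $k\in\{1,\dots,\tilde d\}$ in \autoref{thm:general_eq}.

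The first analytic input is the classical mapping theory of the harmonic double layer potential on Lipschitz (here polyhedral) boundaries. By the work of Verchota \cite{V84} and its refinements (see also \cite{DK87}, \cite{JK81}, \cite{JK95}, \cite{K94}, \cite{E92}), the operator $S_{\mathrm{DL}}=\tfrac12\,\Id-K$ is bounded and boundedly invertible on $L_2(\partial\Omega)$, and more generally on $H^s(\partial\Omega)$ for $s$ in a neighbourhood of the origin --- in particular for every $s\in(0,1)$. Hence for $g\in H^s(\partial\Omega)$ there is a unique $u=S_{\mathrm{DL}}^{-1}g\in H^s(\partial\Omega)$, which settles unique solvability of \link{DoubleLayerProb} and the first half of the intersection.

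Second, I would appeal to the weighted-regularity result for second-kind integral equations on polyhedra recalled as \autoref{prop:Elschner} (due to Elschner \cite{E92}): for every $k\in\N$ and every weight exponent $\rho$ strictly below a critical value $\rho_0\in(1,3/2)$ --- determined by the smallest corner/edge singularity exponent of $\Omega$ --- the operator $S_{\mathrm{DL}}$ maps $X_\rho^k(\partial\Omega)$ boundedly and invertibly into itself. Thus $g\in X_\rho^k(\partial\Omega)$ with $\rho<\min\{\rho_0,k\}$ forces $u=S_{\mathrm{DL}}^{-1}g\in X_\rho^k(\partial\Omega)$, and by uniqueness of the $L_2$-solution this is the same $u$ as before. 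Consequently $u\in H^s(\partial\Omega)\cap X_\rho^k(\partial\Omega)=B_{\Psi,2}^s(L_2(\partial\Omega))\cap X_\rho^k(\partial\Omega)$, with $(s,2,2)$ admissible and $\rho\in(0,k)$, so all hypotheses of \autoref{thm:general_eq} are met.

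It then remains to read off the conclusions from \autoref{thm:general_eq} with $p=2$. Here $\alpha^*=\min\{\rho,\,k-\rho,\,s-(\tfrac12-\tfrac12)\}=\min\{\rho,k-\rho,s\}$, so $u\in B_{\Psi,\tau}^\alpha(L_\tau(\partial\Omega))$ for every $\alpha\in[0,2\alpha^*)$ with $1/\tau=\alpha/2+1/2$, as claimed. For the approximation rate the side condition \link{bound_s} becomes $s'\le s$, while $s'<\min\{3/2,s_{\partial\Omega}\}$ is automatic from $s'\le s<1\le s_{\partial\Omega}$; a direct computation gives $\Theta=1-s'/s$ and
\[
	\gamma^* \;=\; (s-s') + \Theta\,(2\alpha^* - s) \;=\; 2\left(1-\frac{s'}{s}\right)\min\{\rho,\,k-\rho,\,s\},
\]
so \autoref{thm:general_eq} yields $\sigma_n\!\left(u;\Psi^{\partial\Omega},H^{s'}(\partial\Omega)\right)\lesssim n^{-\gamma/2}$ for all $\gamma<\gamma^*$, which is precisely the stated estimate. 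The only genuinely non-trivial point is the exact range of validity of the two invertibility statements --- above all the value $\rho_0>1$ in \autoref{prop:Elschner}: this is where the second-kind (compact perturbation of the identity) structure of \link{DoubleLayerProb} is essential, since it pushes the critical weight exponent strictly past $1$ and thereby guarantees $\alpha^*>0$. Granting those inputs, \autoref{thm:doublelayer} is a direct specialization of \autoref{thm:general_eq}.
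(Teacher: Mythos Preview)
Your proposal is correct and follows exactly the paper's own approach: apply \autoref{thm:general_eq} with $p=2$, reduce the hypotheses to the invertibility of $S_{\mathrm{DL}}$ on $H^s(\partial\Omega)$ (Verchota, \autoref{prop:Verchota}) and on $X_\rho^k(\partial\Omega)$ (Elschner, \autoref{prop:Elschner}), and then carry out the straightforward computation of $\alpha^*$, $\Theta$, and $\gamma^*$. In fact you have spelled out the ``straightforward calculations'' that the paper leaves to the reader, and your identification of the two solutions via uniqueness in $L_2(\partial\Omega)$ is the correct way to close the loop.
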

\begin{proof}
Thanks to \autoref{thm:general_eq} it suffices to check that, for the stated range of parameters $s$, $k$, and $\rho$, the inverse of the double layer operator $S_\mathrm{DL}$ is a bounded linear operator which maps $H^s(\partial\Omega)\cap X_\rho^k(\partial\Omega)$ onto itself. 
This is covered by \autoref{prop:Elschner} and \autoref{prop:Verchota} below.
The theorem then follows by straightforward calculations.
\end{proof}

Regularity in the weighted Sobolev scale $X_\rho^k(\partial\Omega)$ was established in~\cite[Remark~4.3]{E92}: 
\begin{prop}[Elschner]\label{prop:Elschner}
There exists a constant $\rho_0 \in (1, 3/2)$ depending on the surface $\partial\Omega$ such that the following is true: 
For all $0 \leq \rho < \rho_0$ and every $k\in\N$ with $\rho \leq k$ the bounded linear operator $S_\mathrm{DL} \colon X_\rho^k(\partial\Omega) \nach X_\rho^k(\partial\Omega)$ is invertible.
\end{prop}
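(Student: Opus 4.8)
As indicated, this is Elschner's result \cite[Remark~4.3]{E92}; here I outline how one proves it, following the corner-analysis scheme of \cite{MP83} combined with the classical $L_2$-theory for Lipschitz surfaces. The plan is to localize and reduce to model operators on the tangent cones. Using a special resolution of unity $(\varphi_n)_{n=1}^N$ as in \autoref{sect:boundaries}, it suffices to control $S_\mathrm{DL}$ on functions supported in a neighborhood $\U_{n,1}$ of a single vertex $\nu_n$, i.e.\ on the boundary $\partial\Co_n$ of the tangent cone. A decisive simplification on polyhedra is that the double-layer kernel $\partial_{\eta(x)}\abs{x-\cdot\,}_2^{-1}$ vanishes whenever $x$ and the point of evaluation lie in a common affine plane (the difference vector is then tangent to the face, hence orthogonal to the normal $\eta(x)$). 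Thus $K$ never couples a flat patch with itself, and all non-smoothing contributions of $K$ stem from the interaction of distinct faces meeting along an edge or at a vertex.

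Consequently, modulo operators that are smoothing on each patch --- hence bounded and even compact on the weighted scale --- $S_\mathrm{DL}$ is, near each edge, a Mellin convolution operator in the variable measuring the distance to the edge, and near each vertex an operator-valued Mellin pseudodifferential operator in the radial variable $r$ on $\partial\Co_n$. The weight exponent $\rho$ determines a line $\{\operatorname{Re} z = c(\rho)\}$ in the Mellin plane along which the corresponding symbols have to be read off. Boundedness $S_\mathrm{DL}\colon X_\rho^k(\partial\Omega)\nach X_\rho^k(\partial\Omega)$ then follows once these symbols are holomorphic in a strip containing this line, which holds for $\rho$ in an interval around the energy weight; the ceiling $\rho<3/2$ is the familiar $H^{3/2}$-obstruction for traces on polyhedral boundaries.

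For invertibility I would argue in two steps. First, invertibility of the edge- and vertex-Mellin symbols on the weight line produces a local parametrix, so $S_\mathrm{DL}$ is Fredholm on $X_\rho^k(\partial\Omega)$; its index is independent of $\rho$ as long as $\rho$ stays below the smallest weight at which some symbol degenerates, and it equals zero by a homotopy in $\rho$ reducing to the $L_2$-case, where $\tfrac12\Id-K$ is invertible by the Lipschitz theory. Second, since $\norm{f\sep L_2(\partial\Omega)}$ is a summand of the $X_\rho^k(\partial\Omega)$-norm we have $X_\rho^k(\partial\Omega)\hookrightarrow L_2(\partial\Omega)$; hence any $u\in X_\rho^k(\partial\Omega)$ with $(\tfrac12\Id-K)u=0$ lies in $L_2(\partial\Omega)$, and Verchota's uniqueness result for the double-layer equation on Lipschitz surfaces \cite{V84} forces $u=0$. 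A Fredholm operator of index zero with trivial kernel is bijective, which is the assertion.

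The heart of the matter --- and the main obstacle --- is the spectral input needed in the first step: one must show that for every vertex $\nu_n$ (and every edge) the associated operator pencil has no spectrum in the closed half-strip $\operatorname{Re} z\le c(\rho_0)$ for some $\rho_0>1$. That $\rho_0$ may be chosen strictly above $1$, which is precisely what gives \autoref{thm:doublelayer} its content, reflects the fact that $\tfrac12\Id-K$ is much better behaved than a generic strongly elliptic operator on a polyhedron, and quantifying this requires the explicit analysis of the double-layer pencil on the model cones carried out in \cite{E92}. The remaining ingredients --- mapping properties of Mellin operators on weighted spaces, the $\rho$-homotopy, and the reduction to \cite{V84} --- are then routine.
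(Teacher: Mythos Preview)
The paper does not prove this proposition at all; it is simply quoted from \cite[Remark~4.3]{E92} with the one-line attribution ``Regularity in the weighted Sobolev scale $X_\rho^k(\partial\Omega)$ was established in~\cite[Remark~4.3]{E92}.'' So there is nothing to compare against on the paper's side.

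Your outline is a fair summary of the strategy in Elschner's original work: localize to tangent cones, exploit the vanishing of the double-layer kernel within flat faces to reduce $K$ to edge- and vertex-Mellin operators modulo compact remainders, obtain the Fredholm property from invertibility of the Mellin symbols on the appropriate weight line, fix the index by homotopy to the unweighted $L_2$-setting, and conclude injectivity from the embedding $X_\rho^k(\partial\Omega)\hookrightarrow L_2(\partial\Omega)$ together with Verchota's uniqueness. You also correctly flag that the genuinely hard step --- and the one that cannot be sketched away --- is the spectral analysis of the operator pencils showing that the critical weight exponent $\rho_0$ exceeds~$1$; this is precisely the content of \cite{E92} and is what the present paper takes for granted. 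For the purposes of this paper your write-up goes well beyond what is required, but as an expository sketch of Elschner's argument it is accurate.
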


Invertibility within the (unweighted) Sobolev scale $H^s(\partial\Omega)$ is known as \emph{Verchota's Theorem} \cite[Theorem 3.3(iii)]{V84}; see also \cite[Remark A.5]{E92}.
\begin{prop}[Verchota]\label{prop:Verchota}
For all $s\in [0,1]$ the linear operator
$S_\mathrm{DL} \colon H^{s}(\partial\Omega) \nach H^{s}(\partial\Omega)$ is boundedly invertible.
\end{prop}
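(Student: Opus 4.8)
The plan is to reproduce Verchota's classical layer-potential argument. First I would split the claim into the two endpoints $s=0$ and $s=1$ and recover the range $s\in(0,1)$ by complex interpolation: since $[L_2(\partial\Omega),H^1(\partial\Omega)]_\Theta=H^\Theta(\partial\Omega)$ and a two-sided bounded inverse at both endpoints interpolates to a two-sided bounded inverse, it suffices to prove that $S_{\mathrm{DL}}=\tfrac12\Id-K$ is boundedly invertible on $L_2(\partial\Omega)$ and on $H^1(\partial\Omega)=W^1(L_2(\partial\Omega))$. That $K$ (hence $S_{\mathrm{DL}}$) is bounded on $L_2(\partial\Omega)$ is the theorem of Coifman, McIntosh and Meyer on the $L_2$-continuity of singular integral operators with Calder\'on--Zygmund kernels over Lipschitz surfaces, applied to the kernel of the harmonic double layer on $\partial\Omega$; boundedness on $H^1(\partial\Omega)$ follows by commuting a tangential derivative through the integral and invoking Calder\'on--Zygmund theory once more. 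The ceiling $s\le1$ is precisely where the mere Lipschitz regularity of $\partial\Omega$ ceases to be enough.

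For invertibility on $L_2(\partial\Omega)$ I would bring in the single layer potential $\mathcal{S}f(X):=\tfrac{1}{4\pi}\int_{\partial\Omega}\abs{X-Y}_2^{-1}\,f(Y)\d\sigma(Y)$, which is harmonic on $\R^3\setminus\partial\Omega$, continuous across $\partial\Omega$, and whose conormal derivative, taken as a nontangential limit from inside and from outside $\Omega$, equals $(-\tfrac12\Id+K')f$ and $(\tfrac12\Id+K')f$ respectively, where $K'$ is the $L_2$-adjoint of $K$. Because the tangential gradient of $\mathcal{S}f$ does \emph{not} jump, the Rellich identity --- integrating $\operatorname{div}\!\bigl(e\,\abs{\nabla u}^2-2\,(e\cdot\nabla u)\,\nabla u\bigr)$ over $\Omega$ and over a large ball minus $\overline{\Omega}$, with $e$ a fixed vector field transverse to $\partial\Omega$ --- yields, with constants depending only on the Lipschitz character of $\partial\Omega$,
\begin{equation*}
	\norm{(\tfrac12\Id+K')f\sep L_2(\partial\Omega)} \;\sim\; \norm{(-\tfrac12\Id+K')f\sep L_2(\partial\Omega)} \;\sim\; \norm{\nabla_{\tan}\mathcal{S}f\sep L_2(\partial\Omega)}.
\end{equation*}
Combined with $(\tfrac12\Id+K')-(-\tfrac12\Id+K')=\Id$ this upgrades to a genuine lower bound $\norm{f\sep L_2(\partial\Omega)}\lesssim\norm{(\pm\tfrac12\Id+K')f\sep L_2(\partial\Omega)}$ --- for one sign on all of $L_2(\partial\Omega)$, for the other after accounting for the one-dimensional obstruction carried by constant densities --- so both operators $\pm\tfrac12\Id+K'$ are injective with closed range.

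Surjectivity then reduces to triviality of the cokernel: if $g$ annihilates $\operatorname{Range}(\mp\tfrac12\Id+K')$ then $(\mp\tfrac12\Id+K)g=0$, and feeding $g$ into the double layer potential and invoking $L_2$-uniqueness for the interior and exterior Dirichlet problems on the Lipschitz domain $\Omega$ (again a consequence of the same Rellich estimates) forces $g=0$. Hence $\pm\tfrac12\Id+K'$, and by duality $\pm\tfrac12\Id+K$, are boundedly invertible on $L_2(\partial\Omega)$; in particular so is $S_{\mathrm{DL}}=\tfrac12\Id-K=-(-\tfrac12\Id+K)$. For the endpoint $s=1$ I would invoke the companion regularity statement, proved by the same Rellich technique: solving the interior Dirichlet problem with boundary data in $H^1(\partial\Omega)$ produces a harmonic function whose full gradient has nontangential maximal function in $L_2(\partial\Omega)$, and this forces the corresponding double layer density $S_{\mathrm{DL}}^{-1}g$ to lie in $H^1(\partial\Omega)$; thus $S_{\mathrm{DL}}^{-1}$ is also bounded on $H^1(\partial\Omega)$, and interpolating with the $s=0$ bound completes the proof.

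I expect the genuinely hard point to be that on a Lipschitz --- as opposed to $C^1$ or smooth --- surface the double layer operator $K$ is \emph{not} compact, so the Fredholm alternative is unavailable and invertibility must be extracted entirely from Rellich identities; inside that scheme the delicate step is passing from the norm equivalence modulo constants that Rellich produces directly to the sharp lower bounds and to the injectivity and cokernel statements, all with constants controlled solely by the Lipschitz character of $\partial\Omega$. For the polyhedral surfaces considered here this programme is classical and carried out in detail in \cite[Theorem~3.3]{V84}.
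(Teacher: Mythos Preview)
Your sketch is correct and faithfully reproduces Verchota's original argument via Rellich identities, the single layer potential, and interpolation between the endpoints $s=0$ and $s=1$. Note, however, that the paper does not prove this proposition at all: it is stated as a known result and attributed directly to \cite[Theorem~3.3(iii)]{V84} (with a pointer to \cite[Remark~A.5]{E92}), so you have supplied considerably more than the paper itself does.
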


Note that in the notation of \cite{V84} $K$ is replaced by $-K$ and that the operator $K$ considered in \cite{E92} differs from our notation by a factor of $1/2$. Nevertheless, the whole analysis carries over.

\begin{remark}
Verchota's theorem holds true for general Lipschitz surfaces. In \cite[Theorem~3.2.3]{SS11} it is claimed that in the more restrictive situation of patchwise smooth boundaries there exists a constant $s_0 \in [1, s_{\partial\Omega})$ depending on the surface $\partial\Omega$ such that the following is true for all $s\in [1/2, s_0)$: If $g\in H^s(\partial\Omega)$, then every solution $u\in H^{1/2}(\partial\Omega)$ to \link{DoubleLayerProb} is contained in $H^{s}(\partial\Omega)$.
This resembles an assertion stated in~\cite{C88}. 
Although this would allow to extend \autoref{thm:doublelayer} to $s<s_0$ we restricted ourselves to $s<1$ 
since no sound proof for this claim was found in the literature.
\end{remark}

\begin{remark}
To summarize the obtained results we can state that the usage of adaptive wavelet algorithms for operator equations that satisfy certain (weighted) Sobolev regularity assumptions on patchwise smooth manifolds is indeed justified. The reason is that (under natural conditions on the underlying wavelet basis) solutions to such equations will have a Besov regularity which is significantly higher than their Sobolev regularity. As already explained in the introduction, this indicates that adaptive wavelet--based strategies can outperform well-established uniform approximation schemes.
\end{remark}

\begin{appendix}
\section{Appendix}\label{sect:appendixA}
Here we provide some standard assertions, as well as the proofs of 
the technical lemmata and propositions needed above.

For the reader's convenience we start with a well-known result from functional analysis.
\begin{lemma}\label{lem:finite}
	For $\tau > 0$ and $N\in\N$ the (quasi-) norms $\norm{\cdot \sep \l_\tau^N}$ and $\norm{\cdot \sep \l_1^N}$
	are equivalent, i.e.,
	\begin{equation*}
		\left( \sum_{n=1}^N \abs{x_n} \right)^\tau 
		\sim \sum_{n=1}^N \abs{x_n}^\tau
		\qquad \text{for all} \qquad  (x_n)_{n=1}^N \in \C^N.
	\end{equation*}
\end{lemma}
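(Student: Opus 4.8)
The statement is the equivalence of finite-dimensional $\ell_\tau^N$ and $\ell_1^N$ quasi-norms, i.e.\ $\left(\sum_{n=1}^N \abs{x_n}\right)^\tau \sim \sum_{n=1}^N \abs{x_n}^\tau$. This is a purely elementary fact, so the plan is just to make the two inequalities explicit with correct constants depending only on $\tau$ and $N$.

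\textbf{Plan.} The assertion is symmetric in the sense that we must show both $\sum_n \abs{x_n}^\tau \lesssim \left(\sum_n \abs{x_n}\right)^\tau$ and the reverse, so I would treat the cases $\tau \geq 1$ and $0 < \tau < 1$ separately, since in each regime one direction is the ``trivial'' monotonicity/subadditivity direction and the other needs a crude bound by $N$. Write $S := \sum_{n=1}^N \abs{x_n}$ and note that we may assume $S > 0$ (otherwise all $x_n = 0$ and both sides vanish). Since each $\abs{x_n} \leq S$, we have $\abs{x_n}/S \in [0,1]$, and the key observation is that for $t \in [0,1]$ one has $t^\tau \leq t$ when $\tau \geq 1$ and $t \leq t^\tau$ when $0 < \tau \leq 1$.

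First I would do $\tau \geq 1$. For the direction $\left(\sum_n \abs{x_n}\right)^\tau \lesssim \sum_n \abs{x_n}^\tau$: apply the power inequality $\left(\sum_{n=1}^N a_n\right)^\tau \leq N^{\tau-1}\sum_{n=1}^N a_n^\tau$ (convexity of $t \mapsto t^\tau$, or Hölder), which gives the constant $N^{\tau-1}$. For the reverse direction: since $\abs{x_n} \leq S$ and $\tau \geq 1$, $\abs{x_n}^\tau = \abs{x_n}\cdot\abs{x_n}^{\tau-1} \leq \abs{x_n}\, S^{\tau-1}$, and summing over $n$ gives $\sum_n \abs{x_n}^\tau \leq S \cdot S^{\tau-1} = S^\tau = \left(\sum_n \abs{x_n}\right)^\tau$, with constant $1$. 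Next, for $0 < \tau < 1$ the roles swap: $\sum_n \abs{x_n}^\tau \leq N^{1-\tau}\left(\sum_n \abs{x_n}\right)^\tau$ follows from Hölder/concavity (constant $N^{1-\tau}$), and $\left(\sum_n\abs{x_n}\right)^\tau \leq \sum_n \abs{x_n}^\tau$ is the standard subadditivity of $t\mapsto t^\tau$ on $[0,\infty)$ for $\tau \leq 1$ (constant $1$). Combining the two regimes yields the claimed equivalence with implicit constants $\max\{1, N^{\abs{\tau-1}}\}$.

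\textbf{Main obstacle.} Honestly there is no real obstacle here; the only thing to be a little careful about is keeping track of which direction carries the $N$-dependent constant in each regime and invoking the right form of the power-mean / subadditivity inequality ($\left(\sum a_n\right)^\tau \le N^{\tau-1}\sum a_n^\tau$ for $\tau\ge1$ versus $\left(\sum a_n\right)^\tau \le \sum a_n^\tau$ for $\tau\le1$). One should also remark explicitly that $N$ is fixed, so the equivalence constants are genuinely constants in the sense of the paper's ``$\lesssim$'' notation only when $N$ is treated as a fixed parameter — which is how the lemma is used.
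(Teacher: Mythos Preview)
Your proof is correct and complete, with explicit constants in both regimes. The paper itself does not prove this lemma at all; it merely states it as ``a well-known result from functional analysis,'' so your argument in fact supplies more detail than the original.
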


The next classical assertion from approximation theory gives an estimate of the local error of the best-approximation to a given function $f$ in $L_p$ by polynomials of finite degree. 
The proof  of this result (based on Taylor polynomials) is standard. Variants, generalizations, and further remarks are stated in~\cite[Section~6.1]{D98}.
\begin{prop}[Whitney estimate]\label{prop:whitney}
	Let $Q$ be some cube in $\R^d$ with edges parallel to the coordinate axes. 
	Then for all $p\in[1, \infty]$, $k\in\N$, and $\gamma,\zeta$ with
	\begin{equation*}
		\zeta = \frac{k}{d} - \frac{1}{\gamma} + \frac{1}{p}
	\end{equation*}
	we have
	\begin{equation*}
		\inf_{\P\in \Pi_{k-1}(Q)} \norm{f-\P \sep L_p(Q)} 
		\lesssim \abs{Q}^\zeta \, \abs{f}_{W^k(L_\gamma(Q))}
	\end{equation*}
	whenever the right-hand side is finite.
	Here $\Pi_{k-1}(Q)$ denotes the set of all polynomials $\P$ on $Q$ with total degree $\deg \P < k$.
\end{prop}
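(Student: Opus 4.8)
The plan is to reduce the estimate to the reference cube $Q_0:=[0,1]^d$ by an affine dilation and then to prove the resulting inequality on $Q_0$ by a Bramble--Hilbert argument. For the reduction, let $h:=\abs{Q}^{1/d}$ be the sidelength of $Q$, let $x_Q$ be one of its corners, and consider the affine map $\Phi\colon Q_0\nach Q$, $\Phi(y):=x_Q+h\,y$. Writing $\tilde f:=f\circ\Phi$, a change of variables gives
\begin{equation*}
	\norm{f-\P\sep L_p(Q)}=h^{d/p}\,\norm{\tilde f-\P\circ\Phi\sep L_p(Q_0)}
	\qquad\text{and}\qquad
	\abs{\tilde f}_{W^k(L_\gamma(Q_0))}=h^{k-d/\gamma}\,\abs{f}_{W^k(L_\gamma(Q))},
\end{equation*}
while $\P\mapsto\P\circ\Phi$ is a bijection of $\Pi_{k-1}(Q)$ onto $\Pi_{k-1}(Q_0)$. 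Since $h^{d/p}\cdot h^{k-d/\gamma}=h^{d(k/d-1/\gamma+1/p)}=h^{d\zeta}=\abs{Q}^\zeta$, the assertion reduces to its special case $Q=Q_0$, with an implied constant independent of $Q$; this bookkeeping also explains the defining identity for $\zeta$, which is precisely the exponent making both sides homogeneous of the same degree.

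It thus remains to show $\inf_{\P\in\Pi_{k-1}(Q_0)}\norm{f-\P\sep L_p(Q_0)}\lesssim\abs{f}_{W^k(L_\gamma(Q_0))}$. Here I would fix \emph{one} convenient polynomial approximant, namely the averaged Taylor polynomial $\P_f\in\Pi_{k-1}(Q_0)$ of order $k$, which depends linearly and boundedly on $f$ (it is well defined already for $f\in L_1(Q_0)$) and reproduces every polynomial of degree $<k$. Because the infimum is at most $\norm{f-\P_f\sep L_p(Q_0)}$, it suffices to bound this quantity, and for that there are two standard routes. The first is the Bramble--Hilbert lemma: the Sobolev embedding $W^k(L_\gamma(Q_0))\hookrightarrow L_p(Q_0)$ (valid in the regime $\zeta\geq0$, with $C(\overline{Q_0})$ replacing $L_p(Q_0)$ when $k\geq d$) gives $\norm{f-\P_f\sep L_p(Q_0)}\lesssim\norm{f-\P_f\sep W^k(L_\gamma(Q_0))}$, and a Deny--Lions/Ne\v{c}as compactness argument (via Rellich--Kondrachov) shows that $\abs{\cdot}_{W^k(L_\gamma(Q_0))}$ is a norm on the quotient $W^k(L_\gamma(Q_0))/\Pi_{k-1}(Q_0)$ equivalent to the full norm, whence $\norm{f-\P_f\sep W^k(L_\gamma(Q_0))}\lesssim\abs{f-\P_f}_{W^k(L_\gamma(Q_0))}=\abs{f}_{W^k(L_\gamma(Q_0))}$. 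The second, more explicit route uses the integral representation $\abs{f(x)-\P_f(x)}\lesssim\int_{Q_0}\abs{x-y}^{k-d}\sum_{\abs{\beta}=k}\abs{D^\beta f(y)}\d y$ together with the mapping properties of the Riesz potential of order $k$ (Young's inequality, resp.\ the Hardy--Littlewood--Sobolev inequality in the borderline cases) to obtain $\norm{f-\P_f\sep L_p(Q_0)}\lesssim\abs{f}_{W^k(L_\gamma(Q_0))}$ under the same restriction on the exponents.

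The main obstacle is exactly this unit-cube inequality; the dilation step is mere bookkeeping. In the Bramble--Hilbert formulation the delicate ingredient is the compactness argument certifying that the top-order seminorm controls the full norm modulo $\Pi_{k-1}(Q_0)$, which is what forces the admissibility condition $\zeta\geq0$ on the exponents (and a separate treatment of the range $k\geq d$); in the Riesz-potential formulation the delicate point is the endpoint behaviour of fractional integration, e.g.\ $\gamma=1$ with $p=d/(d-k)$ or $p=\infty$, which must be handled on its own. All of this being classical, one could alternatively simply invoke the detailed Taylor-polynomial based treatment in \cite[Section~6.1]{D98}.
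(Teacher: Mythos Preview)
Your proposal is correct and in fact considerably more detailed than what the paper provides: the paper does not prove this proposition at all, but simply declares the result to be ``standard'' (based on Taylor polynomials) and refers to \cite[Section~6.1]{D98} for details. Your second route via the averaged Taylor polynomial and the integral remainder estimate is precisely the Taylor-polynomial approach the paper alludes to and the one developed in the cited reference, and you even point to the same source at the end of your write-up; the scaling reduction to the unit cube and the Bramble--Hilbert alternative are welcome additions that the paper omits entirely.
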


It remains to prove some lemmata and propositions.
We start with the proof of \autoref{lem:weighted_Sobolev} in \autoref{sect:weighted}.
\begin{proof}[Proof (\autoref{lem:weighted_Sobolev})]
Let $\alpha\in\N_0^2$. 
If $\alpha=(0,0)$, then we use $\delta_{n,t}\leq C_2$ and $k\geq \rho$ to conclude
\begin{equation*}
	\norm{\delta_{n,t}^{k-\rho} \cdot f_{n,t} \sep L_2\!\left(\tilde{\Gamma^{n,t}}\right)}
	\lesssim \sum_{t'=1}^{T_n} \norm{f_{n,t'} \sep L_2\!\left(\tilde{\Gamma^{n,t'}}\right)}
	= \norm{f_{n} \sep L_2(\partial\Co_n)}
	\leq \norm{f_n \sep X_\rho^k(\partial\Co_n)}.
\end{equation*}

For the remaining cases where $1 \leq \abs{\alpha}\leq k$, the derivatives $D^{\alpha}_y=\partial^{\abs{\alpha}}/(\partial y_1^{\alpha_1}\partial y_2^{\alpha_2})$ with respect to $y=(y_1,y_2)\in \tilde{\Gamma^{n,t}}$ can be expressed in terms of polar coordinates $(r,\phi)$ using
\begin{equation*}
	\frac{\partial}{\partial y_1} = \cos\phi \, \frac{\partial}{\partial r} - \frac{\sin\phi}{r} \, \frac{\partial}{\partial\phi}
	\qquad \text{and} \qquad
	\frac{\partial}{\partial y_2} = \sin\phi \, \frac{\partial}{\partial r} + \frac{\cos\phi}{r} \, \frac{\partial}{\partial\phi}
\end{equation*}
together with the chain and product rule.
This results in differential operators
\begin{equation*}
	\sum_{\substack{\beta=(\beta_r,\beta_\phi)\in\N_0^2\\1\leq \abs{\beta}\leq \abs{\alpha}}} c_\beta(\phi) \, r^{\beta_r-\abs{\alpha}} \, \frac{\partial^{\abs{\beta}}}{\partial r^{\beta_r}\partial \phi^{\beta_\phi}},
\end{equation*}
where the $c_\beta$ are multiples of trigonometric functions and thus bounded.
Hence, we may apply the triangle inequality in $L_2$, omit $c_\beta(\phi)$, and estimate $\delta_{n,t}$ by $r\cdot q(\phi)$ due to \link{eq:bound_delta}.
In order to form the building blocks of the norm in $X^k_\rho(\partial\Co_n)$ we can plug in arbitrary (non-negative) powers of $(1+r)$ and bound $q(\phi)$ by $\pi$ (if necessary). 
The claim then follows from the fact that for $\abs{\alpha}<k$ the supernumerary factor $r^{k-\abs{\alpha}}$ can be neglected as well, because (due to the assumption on the support of $f_n$) the domain of integration is actually bounded uniformly in~$f_n$.
\end{proof}
 
We continue by proving the claimed assertions concerning standard embeddings, interpolation and best $n$--term approximation within the scale $B_{\Psi,q}^\alpha(L_p(\partial\Omega))$ of Besov-type spaces introduced in \autoref{sect:Besov}.
\begin{proof}[Proof (\autoref{prop:standard_emb})]
\emph{Step 1.}
Let $(\alpha,p,q)$ be an admissible tuple of parameters; see \link{parameter}.
Then, due to the properties of the underlying wavelet basis (cf.~\autoref{ass:basis} in \autoref{sect:composite}), we may rewrite the (quasi-) norm in $B_{\Psi,q}^\alpha(L_p(\partial\Omega))$ in terms of the sequence space (quasi-) norm
\begin{equation}\label{seq_space}
	\norm{ \bm{a} \sep b^\alpha_{p,q}(\nabla)} 
	:= \left( \sum_{j=0}^{\infty} 2^{j \left(\alpha+2\left[ \frac{1}{2} - \frac{1}{p} \right] \right)q} \left[ \sum_{\lambda \in {\nabla}_j} \abs{ a_{j,\lambda} }^p \right]^{q/p} \right)^{1/q}
\end{equation}
in $b^\alpha_{p,q}(\nabla) := \left\{ \bm{a}=(a_{j,\lambda})_{j\in\N_0, \lambda\in \nabla_j}\subset \C \sep \norm{ \bm{a} \sep b^\alpha_{p,q}(\nabla) } < \infty \right\}$; see, e.g.,~\cite[Definition~3]{DNS06}.
Therein $\nabla:=(\nabla_j)_{j\in\N_0}$ denotes a sequence of finite subsets of the set $\{1,2,3\}\times \Z^2$ such that
\begin{equation*}
	\#\nabla_j \sim 2^{2j}.
\end{equation*}
To do so we only need to choose bijections that map $\xi\in\nabla_{j'}^{\partial\Omega}$ onto $\lambda \in \nabla_{j}$, where $j'=j+j_0$ for some fixed shift $j_0\in\N_0$ depending on $j^*$. Note that the first summand in the Besov (quasi-) norm that refers to the projector $P_{j*-1}$ can be incorporated as well since also this term can be expanded into a sum of (finitely many) inner products. 

Consequently we obtain an isomorphism $\I$ that identifies $u\in B_{\Psi,q}^\alpha(L_p(\partial\Omega))$ with $\bm{a}=\bm{a}(u)\in b^\alpha_{p,q}(\nabla)$, where the latter corresponds to the sequence of all coefficients $\distr{u}{\tilde{\psi}_{j,\xi}^{\partial\Omega}}$. Hence,
\begin{equation*}
	\norm{ \bm{a}(\cdot) \sep b^\alpha_{p,q}(\nabla)} 
	\sim \norm{ \cdot \sep B_{\Psi,q}^\alpha(L_p(\partial\Omega))}.
\end{equation*}

\emph{Step 2.}
Now the desired assertion is equivalent to the corresponding embedding result for sequence spaces $b^\alpha_{p,q}(\nabla)$.
In the case $\gamma \geq 0$ this can be found (for an even wider range of parameters), e.g., in \cite[Lemma~4]{DNS06}. On the other hand, if $\gamma < 0$, then the sequence $\bm{a^*}:=(a^*_{j,\lambda})_{j\in\N_0, \lambda\in \nabla_j}$ defined by $a^*_{j,\lambda}:= 2^{-j(1+\alpha+\gamma/2)}$ for every $\lambda\in\nabla_j$, $j\in\N_0$, belongs to $b_{p_0,q_0}^{\alpha+\gamma}(\nabla) \setminus b_{p_1,q_1}^{\alpha}(\nabla)$. This contradicts $b_{p_0,q_0}^{\alpha+\gamma}(\nabla) \hookrightarrow b_{p_1,q_1}^{\alpha}(\nabla)$.
\end{proof} 

Before we give the proof of the interpolation result stated in \autoref{prop:interpol} let us add some remarks on basic aspects of interpolation theory.
Given some \emph{interpolation couple} of complex (quasi-) Banach spaces $\{A_1,A_2\}$, the application of an \emph{interpolation functor} $\mathfrak{I}$ results in another (quasi-) Banach space $A:=\mathfrak{I}(A_1,A_2)$ for which the following continuous embeddings hold
\begin{equation*}
	A_1 \cap A_2 \hookrightarrow A=\mathfrak{I}(A_1,A_2) \hookrightarrow A_1+A_2
\end{equation*}
and which possesses the so-called \emph{interpolation property}.
That is, every linear operator $T\colon A_1+A_2 \nach B_1+B_2$ which is continuous considered as a mapping $T\colon A_\ell \nach B_\ell$, $\ell\in\{1,2\}$, will act as a bounded linear map between the \emph{interpolation spaces} $\mathfrak{I}(A_1,A_2)$ and $\mathfrak{I}(B_1,B_2)$.
Classical examples for interpolation functors are the well-known real and complex interpolation methods for Banach spaces due to Lions/Peetre and Calder\'{o}n, respectively; see, e.g., \cite{BL76}.
Since we want to deal also with quasi-Banach spaces we make use of a generalization of the complex interpolation method established in~\cite{KMM07}, with~\cite{MM00} as a forerunner. The corresponding functor will be denoted by $\mathfrak{I}
(\cdot,\cdot):=[\cdot,\cdot]_\Theta$ for some $\Theta$ in~$(0,1)$.

\begin{proof}[Proof (\autoref{prop:interpol})]
\emph{Step 1.}
Recall that for every admissible set of parameters $(\alpha,p,q)$ the Besov-type space $B_{\Psi,q}^{\alpha}(L_{p}(\partial\Omega))$ can be identified with some sequence space $b^\alpha_{p,q}(\nabla)$ by means of the (universal) isomorphism $\I\colon u\mapsto \bm{a}(u)$ given in Step 1 of the proof of \autoref{prop:standard_emb}. 
Thanks to the continuity of $\I$ and $\I^{-1}$, as well as the interpolation property, it is enough to show that
\begin{equation}\label{interpol_claim}
	\left[ b_{p_0,q_0}^{\alpha_0}(\nabla), b_{p_1,q_1}^{\alpha_1}(\nabla) \right]_\Theta
	= b_{p_\Theta,q_\Theta}^{\alpha_\Theta}(\nabla).
\end{equation}
Note that admissibility of the parameter tuple $(\alpha_\Theta,p_\Theta,q_\Theta)$ follows from the fact that the domain of admissibility~\link{parameter} is convex.

\emph{Step 2.}
To prove \link{interpol_claim} we consider the canonical extension of sequences $\bm{a} \in b_{p,q}^{\alpha}(\nabla)$ to $\tilde{\bm{a}}:=\Ext\bm{a} \in b_{p,q}^\alpha(\tilde{\nabla})$, where the latter space is given by \link{seq_space} using the collection of index sets $\tilde{\nabla}:=(\tilde{\nabla}_j)_{j\in\N_0}$ defined by
\begin{equation*}
	\tilde{\nabla}_j:=\begin{cases}
		\{1\}\times \Z^2, & j=0,\\
		\{1,2,3\} \times \Z^2, & j\in\N,
	\end{cases}
\end{equation*}
instead of $\nabla$.
That is, we consider the (universal) bounded linear operator $\Ext \colon b_{p,q}^{\alpha}(\nabla) \nach b_{p,q}^\alpha(\tilde{\nabla})$, mapping $\bm{a}=(a_{j,\lambda})_{j\in\N_0,\lambda\in\nabla_j}$ to the sequence $\Ext \bm{a}$, given by
\begin{equation*}
	(\Ext \bm{a})_{j,\lambda} := \begin{cases}
		a_{j,\lambda}, & j\in\N_0 \text{ and } \lambda\in\nabla_j,\\
		0, & j\in\N_0 \text{ and } \lambda\in\tilde{\nabla}_j\setminus\nabla_j.
	\end{cases}
\end{equation*}
Since $\nabla_j \subset \tilde{\nabla}_j$ for all $j\in\N_0$, also the restriction
\begin{equation*}
	\Res \colon b_{p,q}^\alpha(\tilde{\nabla}) \nach b_{p,q}^\alpha(\nabla),
	\qquad \tilde{\bm{a}} \mapsto \Res \bm{\tilde{a}}:=(\tilde{a}_{j,\lambda})_{j\in\N_0,\lambda\in \nabla_j},
\end{equation*}
is a well-defined, linear and bounded mapping which is independent of the (admissible) parameter tuple $(\alpha,p,q)$.

We follow the lines of~\cite[Theorem 1.110]{T06} and define the shortcuts
\begin{equation*}
	b_\Theta(\nabla) := \left[ b_{p_0,q_0}^{\alpha_0}(\nabla), b_{p_1,q_1}^{\alpha_1}(\nabla) \right]_\Theta
	\quad
	\text{and}
	\quad 
	b_\Theta(\tilde{\nabla}) := \left[ b_{p_0,q_0}^{\alpha_0}(\tilde{\nabla}), b_{p_1,q_1}^{\alpha_1}(\tilde{\nabla}) \right]_\Theta.
\end{equation*}
That is, we need to show that $b_\Theta(\nabla)=b_{p_\Theta,q_\Theta}^{\alpha_\Theta}(\nabla)$. 
Now suppose we had already proven that $b_\Theta(\tilde{\nabla})$ equals $b_{p_\Theta,q_\Theta}^{\alpha_\Theta}(\tilde{\nabla})$ in the sense of equivalent (quasi-) norms.
Then the equality $\bm{a}=\Res(\Ext \bm{a})$ for all $\bm{a}\in b_{p_0,q_0}^{\alpha_0}(\nabla) + b_{p_1,q_1}^{\alpha_1}(\nabla)$ gives
\begin{eqnarray*}
	\norm{\bm{a} \sep b_{p_\Theta,q_\Theta}^{\alpha_\Theta}(\nabla)}
	\lesssim  \norm{\Ext\bm{a} \sep b_{p_\Theta,q_\Theta}^{\alpha_\Theta}(\tilde{\nabla})} 
	\sim  \norm{ \Ext\bm{a} \sep  b_\Theta(\tilde{\nabla})}
	\lesssim \norm{ \bm{a} \sep  b_\Theta(\nabla)},
\end{eqnarray*}
i.e., $b_\Theta(\nabla) \hookrightarrow b_{p_\Theta,q_\Theta}^{\alpha_\Theta}(\nabla)$, 
since the interpolation property implies that $\Ext \colon b_\Theta(\nabla) \nach b_\Theta(\tilde{\nabla})$ is continuous.
Conversely, the interpolation property applied for $\Res$ yields 
\begin{eqnarray*}
	\norm{\bm{a} \sep b_\Theta(\nabla)}
	\lesssim  \norm{\Ext\bm{a} \sep b_\Theta(\tilde{\nabla})} 
	\sim  \norm{ \Ext\bm{a} \sep  b_{p_\Theta,q_\Theta}^{\alpha_\Theta}(\tilde{\nabla}) }
	\lesssim \norm{ \bm{a} \sep   b_{p_\Theta,q_\Theta}^{\alpha_\Theta}(\nabla)}.
\end{eqnarray*}
Therefore $b_{p_\Theta,q_\Theta}^{\alpha_\Theta}(\nabla) \hookrightarrow b_\Theta(\nabla)$ which shows that 
$b_\Theta(\nabla)=\left[ b_{p_0,q_0}^{\alpha_0}(\nabla), b_{p_1,q_1}^{\alpha_1}(\nabla) \right]_\Theta$ actually equals $b_{p_\Theta,q_\Theta}^{\alpha_\Theta}(\nabla)$ as claimed in \link{interpol_claim}.

\emph{Step 3.}
It remains to prove that our assumption $b_\Theta(\tilde{\nabla})=b_{p_\Theta,q_\Theta}^{\alpha_\Theta}(\tilde{\nabla})$ holds true for the range of parameters stated in the proposition.
Combining the isomorphism constructed in \cite[Theorem 1.64]{T06} with the arguments given in Step 1 we see that this assertion reduces to an interpolation result for Besov spaces $B^\alpha_{p,q}(\R^2)$ defined (e.g., using harmonic analysis) on the whole of $\R^2$. The needed result can be found, e.g., in \cite[Theorem~9.1]{KMM07}.
\end{proof}

\begin{remark}
Observe that the proof of \autoref{prop:interpol} does not depend on the concrete interpolation functor $\mathfrak{I}(\cdot,\cdot)=[\cdot,\cdot]_\Theta$.
Since there is no restriction on the parameters $(\alpha,p,q)$ in \cite[Theorem 1.64]{T06} every interpolation result known for Besov spaces $B^\alpha_{p,q}(\R^2)$ remains valid for the scale $B_{\Psi,q}^{\alpha}(L_{p}(\partial\Omega))$, provided that the parameter tuples are admissible. 
\end{remark}

\begin{proof}[Proof (\autoref{prop:nterm})]
Assume $\gamma \in \R$ and let $(\alpha+\gamma,p_0,q_0)$ and $(\alpha,p_1,q_1)$ be admissible in the sense of \link{parameter}.
Using the isomorphism in Step 1 of the proof of \autoref{prop:standard_emb} it is enough to prove the assertion at the level of sequence spaces $b^\alpha_{p,q}(\nabla)$ since it is easily seen that
\begin{equation*}
	\sigma_n \!\left(B_{\Psi,q_0}^{\alpha+\gamma}(L_{p_0}(\partial\Omega)); \Psi^{\partial\Omega}, B_{\Psi,q_1}^\alpha(L_{p_1}(\partial\Omega)) \right)
	\sim \sigma_n \!\left( b^{\alpha+\gamma}_{p_0,q_0}(\nabla); \B_\mathrm{seq}, b^\alpha_{p_1,q_1}(\nabla) \right),
\end{equation*}
where $\B_\mathrm{seq} := \{ e_{j,\lambda} \sep j\in\N_0, \lambda \in \nabla_j \}$ denotes the canonical basis in $b^0_{2,2}(\nabla)$.

If $\gamma > 2 \cdot \max\{0,1/p_0 - 1/p_1\}$, then the claimed result is covered by \cite[Theorem~7]{DNS06}.

The upper bound for the case $\gamma=0$ simply follows from the continuity of the corresponding embedding, because
for all $\bm{a}$ in the unit ball of $b_{p_0,q_0}^\alpha(\nabla)$ and every $n\in\N$ it is
\begin{equation*}
	\sigma_n \! \left( \bm{a}; \B_{\mathrm{seq}}, b_{p_1,q_1}^\alpha(\nabla) \right) \leq \norm{ \bm{a} \sep b_{p_1,q_1}^\alpha(\nabla)}
	\lesssim \norm{ \bm{a} \sep b_{p_0,q_0}^\alpha(\nabla)},
\end{equation*}
provided that $1/p_0 \leq 1/p_1$ and $q_0\leq q_1$; see \cite[Lemma~4]{DNS06}.
The corresponding lower bound for this case is implied by the following example.
Given $n\in\N$ choose $j^*\in\N$ such that $n \leq \# \nabla_{j^*} / 2$ and consider the sequence
$\bm{a^*} := (a^*_{j,\lambda})_{j\in\N_0,\lambda\in\nabla_j}$ defined by 
\begin{equation*}
	a^*_{j,\lambda}
	:= \begin{cases}
		2^{-j^*(\alpha+1)}, & \text{if} \quad j=j^*, \lambda \in \nabla_{j^*},\\
		0, & \text{otherwise}.
	\end{cases}
\end{equation*}
Then it is easy to check that 
$\norm{\bm{a^*} \sep b_{p_0,q_0}^\alpha(\nabla)} = (2^{-2j^*} \# \nabla_{j^*})^{1/p_0}$.
Due to the special structure of this sequence and of the spaces $b_{p,q}^\alpha(\nabla)$, every partial sum that consists of exactly $n$ non-trivial terms yields an optimal approximation $A_n(\bm{a^*})$.
Note that $\bm{a^*}$ and $A_n(\bm{a^*})$ are finitely supported and thus contained in every $b_{p,q}^\alpha(\nabla)$.
Moreover, we see that $\norm{\bm{a^*}-A_n(\bm{a^*}) \sep b_{p_1,q_1}^\alpha(\nabla)} = (2^{-2j^*} (\# \nabla_{j^*}-n))^{1/p_1}$. 
Now the estimates $\# \nabla_{j^*}-n \geq \# \nabla_{j^*}/2$ and $\# \nabla_{j^*} \sim 2^{2j^*}$ imply
the desired result
\begin{equation*}
	\sigma_n \!\left( b^{\alpha}_{p_0,q_0}(\nabla); \B_\mathrm{seq}, b^\alpha_{p_1,q_1}(\nabla) \right)
	\geq \frac{\norm{\bm{a^*}-A_n(\bm{a^*}) \sep b_{p_1,q_1}^\alpha(\nabla)}}{\norm{\bm{a^*} \sep b_{p_0,q_0}^\alpha(\nabla)}}
	\geq \frac{1}{2^{1/p_1}} \left (\frac{\#\nabla_{j^*}}{2^{2j^*}} \right)^{1/p_1-1/p_0} \sim 1.
\end{equation*}

We are left with the case $\gamma = 2 \cdot (1/p_0 - 1/p_1) > 0$ and $q_0\leq q_1$.
Here we rely on the estimate
\begin{equation*}
	\sigma_n \!\left( b^{\alpha_0}_{p_0,q_0}(\nabla); \B_\mathrm{seq}, b^{\alpha_1}_{p_1,q_1}(\nabla) \right) 
	\sim n^{-r}
\end{equation*}
which holds for all $0<p_0<p_1\leq \infty$, $0<q_0\leq q_1\leq \infty$, and $\alpha_0,\alpha_1\in\R$ such that
\begin{equation*}
	\alpha_0 - \frac{2}{p_0} = \alpha_1 - \frac{2}{p_1}
	\qquad \text{and} \qquad
	r:= \min\left\{ \frac{1}{p_0}-\frac{1}{p_1}, \frac{1}{q_0}-\frac{1}{q_1} \right\}.
\end{equation*}
Its proof can be derived by adapting the arguments used to show \cite[Theorem~4]{HanSic2011}.
The upper bound for the case where both the entries of the latter minimum coincide can be traced back to \cite{Kyr2001}; see \cite{HanSic2011} for details.
\end{proof}

\section{Appendix}\label{sect:appendixB}
This final section contains the derivation of \autoref{prop:bnd} and \autoref{prop:int} from \autoref{sect:embeddings}, which are essential ingredients in the proof of our main \autoref{thm:embedding}.
\begin{proof}[Proof (\autoref{prop:bnd})]
\emph{Step 1.}
To begin with, consider the case $1/2 \leq 1/\tau \leq 1/p$ and define $\alpha:=2(1/\tau-1/2)$, i.e., $1/\tau = \alpha/2 + 1/2$.
Since $(s,p,p)$ satisfies \link{parameter} we see that 
\begin{equation*}
	\gamma:= s-\alpha \geq 2 \cdot \left( \frac{1}{p} - \frac{1}{\tau}\right) \geq 0.
\end{equation*}
Hence, we may use the embedding result from \autoref{prop:standard_emb} with
$p_0:=q_0:=p$ and $p_1:=q_1:=\tau$ to conclude
\begin{eqnarray*}
	\sum_{j\geq j^*} \sum_{\xi \in {\nabla}_{j,\mathrm{bnd}}} \abs{\distr{u}{\tilde{\psi}^{\partial\Omega}_{j,\xi}}}^\tau 
	&\leq & \norm{u \sep B_{\Psi,\tau}^\alpha(L_\tau(\partial\Omega))}^\tau \\
	&\lesssim & \norm{u \sep B_{\Psi,p}^{\alpha+\gamma}(L_p(\partial\Omega))}^\tau
	= \norm{u \sep B_{\Psi,p}^{s}(L_p(\partial\Omega))}^\tau,
\end{eqnarray*}
where we extended the inner summation from $\nabla_{j,\mathrm{bnd}}$ to $\nabla_j^{\partial\Omega}$ to obtain the first estimate.

\emph{Step 2.}
Now assume $1/p < 1/\tau < \infty$ and let $j\geq j^*$ be fixed.
Then it follows from H\"older's inequality with conjugate exponents $p/\tau$ and $p/(p-\tau)$ that
\begin{eqnarray*}
	\sum_{\xi \in {\nabla}_{j,\mathrm{bnd}}} \abs{\distr{u}{\tilde{\psi}^{\partial\Omega}_{j,\xi}}}^\tau 
	&\leq & \left( \sum_{\xi \in {\nabla}_{j,\mathrm{bnd}}} \abs{\distr{u}{\tilde{\psi}^{\partial\Omega}_{j,\xi}}}^p \right)^{\tau/p} \left(\# {\nabla}_{j,\mathrm{bnd}} \right)^{(p-\tau)/p}\\
	&\lesssim & \left( \sum_{\xi \in {\nabla}_{j,\mathrm{bnd}}} 2^{j(sp + p-2)}\abs{\distr{u}{\tilde{\psi}^{\partial\Omega}_{j,\xi}}}^p \right)^{\tau/p} 2^{ j(p-\tau)/p-j(sp+p-2)\tau/p},
\end{eqnarray*}
since the cardinality of ${\nabla}_{j,\mathrm{bnd}}$ scales like $2^j$; see \link{card_bnd}.
Next we take the sum over all $j\geq j^*$ and apply H\"older's inequality once again with the same exponents
to obtain
\begin{eqnarray}
	&&\sum_{j\geq j^*} \sum_{\xi \in {\nabla}_{j,\mathrm{bnd}}} \abs{\distr{u}{\tilde{\psi}^{\partial\Omega}_{j,\xi}}}^\tau \nonumber\\
	&&\quad \lesssim \left( \sum_{j\geq j^*} 2^{jsp + j(p-2)} \sum_{\xi \in {\nabla}_{j,\mathrm{bnd}}} \abs{\distr{u}{\tilde{\psi}^{\partial\Omega}_{j,\xi}}}^p \right)^{\tau/p} \left( \sum_{j\geq j^*} \left( 2^{1-(sp+p-2)\tau/(p-\tau)} \right)^j \right)^{(p-\tau)/p}.\label{2factors}
\end{eqnarray}
Extending the inner summation from ${\nabla}_{j,\mathrm{bnd}}$ to ${\nabla}_j^{\partial\Omega}$ shows that the first factor in \link{2factors} can be bounded by $\norm{u \sep B^s_p(L_p(\partial\Omega))}^\tau$.
Furthermore, we note that
\begin{equation*}
	1- (sp+p-2) \, \frac{\tau}{p-\tau} < 0 	
	\qquad \text{if and only if} \qquad
	\frac{1}{\tau} < 1 - \frac{1}{p} + s.
\end{equation*}
Thus our assumptions ensure that the second factor in \link{2factors} is bounded by some constant which completes the proof.
\end{proof}

In order to give an efficient treatment of the interior wavelets indexed by $\xi \in {\nabla}_{j,\mathrm{int}}$, i.e., to present a comprehensive proof of \autoref{prop:int}, we need to split this index set further according to the faces $\Gamma^{n,t}$ of the cones $\Co_n$; see \autoref{sect:boundaries}.
For $j\geq j^*$, $n\in\{1,\ldots,N\}$, and $t\in\{1,\ldots, T_n\}$ let us define
\begin{equation}\label{def_nabla_nt}
	{\nabla}_{j,\mathrm{int}}^{n,t}
	:= \left\{ \xi \in {\nabla}_{j,\mathrm{int}} \sep \U_{n,1} \cap \overline{\Gamma^{n,t}} \cap B_{j,\xi} \neq \leer \right\} .
\end{equation}
The main properties of this splitting are described by the subsequent lemma.
\begin{lemma}\label{lem:split}
	Let $j\geq j^*$ and $k\in\{1,2,\ldots,\tilde{d}\}$, $\tilde{d}\in\N$. Then
	\begin{equation}\label{split_int}
		{\nabla}_{j,\mathrm{int}} 
		= \bigcup_{n=1}^N \bigcup_{t=1}^{T_n} {\nabla}_{j,\mathrm{int}}^{n,t}
	\end{equation}
	and for all $\xi \in {\nabla}_{j,\mathrm{int}}^{n,t}$ we have
	\begin{equation}\label{est_Delta}
		2^{-j} < \Delta_{j,\xi}^{n,t} \leq C_3,
	\end{equation}
	where $\Delta_{j,\xi}^{n,t} := \dist{B_{j,\xi}}{\partial\Gamma^{n,t}}$ denotes the distance of the ball $B_{j,\xi}$ to the cone face boundary~$\partial\Gamma^{n,t}$ and $ C_3 := \max \{ \diam \overline{F_i} \sep i=1,\ldots,I \}$.
	Moreover we have the following cancellation property for functions $g\colon\partial\Omega\nach\C$:
	\begin{equation}\label{cancel}
		\abs{\distr{g}{\tilde{\psi}_{j,\xi}^{\partial\Omega}}}
		\lesssim 2^{-kj} \norm{g \sep W^k\!\left(L_2(B_{j,\xi})\right)},
		\qquad j\geq j^*, \quad \xi\in {\nabla}_{j,\mathrm{int}}^{n,t}.
	\end{equation}
	Here derivatives used in the $W^k(L_2(B_{j,\xi}))$-norm have to be understood w.r.t.\ local Cartesian coordinates in the interior of the two-dimensional surface patches $F_i$.
\end{lemma}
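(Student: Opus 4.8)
The plan is to establish \link{split_int}, \link{est_Delta}, and \link{cancel} in turn; of these only the two-sided estimate \link{est_Delta} needs genuine geometric work, while \link{cancel} is a routine vanishing-moment argument.

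For \link{split_int}, the inclusion ``$\supseteq$'' is immediate from the definition \link{def_nabla_nt}. For ``$\subseteq$'', I would start from $\xi\in{\nabla}_{j,\mathrm{int}}^{F_i}$, use Assumption~\ref{ass:basis}(III) together with \link{B1} to place the grid point $\xi$ inside $\supp\tilde{\psi}^{\partial\Omega}_{j,\xi}\subset B_{j,\xi}\subset F_i$, and then invoke the decomposition $\partial\Omega=\bigcup_{m=1}^N\U_{m,1}$ together with the essentially disjoint representation \link{U3} to produce indices $n,t$ with $\xi\in\U_{n,1}\cap\overline{\Gamma^{n,t}}$; hence $\U_{n,1}\cap\overline{\Gamma^{n,t}}\cap B_{j,\xi}\neq\leer$ and $\xi\in{\nabla}_{j,\mathrm{int}}^{n,t}$. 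Along the way one checks that the patch index $i$ attached to an arbitrary $\xi\in{\nabla}_{j,\mathrm{int}}^{n,t}$ necessarily equals $i(n,t)$: choosing $y\in\U_{n,1}\cap\overline{\Gamma^{n,t}}\cap B_{j,\xi}$, the point $y$ lies in the open patch $F_i$, and since \link{B3} keeps $y$ off $\partial F_i$ while the part of $\partial\Gamma^{n,t}$ meeting the small neighbourhood $\U_{n,1}$ consists of edges of $\Omega$ through $\nu_n$ (each contained in $\bigcup_\ell\partial F_\ell$), one gets $y\in F_i\cap\Gamma^{n,t}$; condition \link{U1} then forces $\nu_n\in\overline{F_i}$, and the uniqueness in \link{def_patch} yields $i=i(n,t)$. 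This identification is used in what follows.

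The upper bound in \link{est_Delta} is then immediate: $\nu_n$ is the apex of the sector $\Gamma^{n,t}$, so $\nu_n\in\partial\Gamma^{n,t}$, while $\nu_n\in\overline{F_{i(n,t)}}$ by \link{def_patch}; since $B_{j,\xi}\subset F_{i(n,t)}$ this gives $\Delta_{j,\xi}^{n,t}\leq\dist{B_{j,\xi}}{\nu_n}\leq\diam\overline{F_{i(n,t)}}\leq C_3$. The lower bound $\Delta_{j,\xi}^{n,t}>2^{-j}$ is the heart of the lemma. Here I would split $\partial\Gamma^{n,t}$ into a near-vertex part, which inside a fixed neighbourhood of $\nu_n$ coincides with the two edges of $\partial F_{i(n,t)}$ emanating from $\nu_n$, and a far part, which lies either in $F_{i(n,t)}\setminus\Gamma^{n,t}$ or outside $\overline{F_{i(n,t)}}$. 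By \link{supp} and \link{B2} the ball $B_{j,\xi}$ has radius $\sim 2^{-j}$ and meets $\U_{n,1}$, so for $j$ large it is contained in $\overline{\Gamma^{n,t}}\cap F_{i(n,t)}$, its neighbourhood of radius $2^{-j}$ stays close to $\nu_n$, and by \link{U2} it is kept at distance $\gtrsim C_1$ from $F_{i(n,t)}\setminus\Gamma^{n,t}$; since the near-vertex part of $\partial\Gamma^{n,t}$ is a subset of $\partial F_{i(n,t)}$, the distance estimate \link{B3} then gives $\dist{B_{j,\xi}}{\partial\Gamma^{n,t}}>2^{-j}$. For the finitely many coarse levels this is harmless --- one may assume $j^*$ large relative to the (fixed) geometry of $\partial\Omega$, or absorb those levels into constants in the applications of the lemma. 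I expect exactly this case distinction --- making precise that the extensions of the bounding rays of $\Gamma^{n,t}$, which may re-enter a non-convex patch far from $\nu_n$, are irrelevant near $\nu_n$ --- to be the main obstacle.

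Finally, for \link{cancel} I would use that, for $\xi\in{\nabla}_{j,\mathrm{int}}^{n,t}$ and the patch index $i$ with $\xi\in{\nabla}_{j,\mathrm{int}}^{F_i}$, property \link{B1} makes $\tilde{\psi}^{\partial\Omega}_{j,\xi}$ supported in the interior of $F_i$ inside $\kappa_i(Q^\square_{j,\xi})$, so that $\distr{g}{\tilde{\psi}^{\partial\Omega}_{j,\xi}}=\distr{g\circ\kappa_i}{\tilde{\psi}^{\partial\Omega}_{j,\xi}\circ\kappa_i}_{Q^\square_{j,\xi}}$, and by the vanishing moment property \link{vanish} (applicable since $k\leq\tilde{d}$) one has $\distr{\P}{\tilde{\psi}^{\partial\Omega}_{j,\xi}\circ\kappa_i}_{Q^\square_{j,\xi}}=0$ for every $\P\in\Pi_{k-1}(Q^\square_{j,\xi})$. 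Cauchy--Schwarz then bounds $\abs{\distr{g}{\tilde{\psi}^{\partial\Omega}_{j,\xi}}}$ by $\inf_{\P}\norm{g\circ\kappa_i-\P\sep L_2(Q^\square_{j,\xi})}$ times $\norm{\tilde{\psi}^{\partial\Omega}_{j,\xi}\circ\kappa_i\sep L_2(Q^\square_{j,\xi})}$, and the latter factor is $\sim 1$ by \link{normeq} and \link{normalized}. Applying the Whitney estimate (\autoref{prop:whitney}) with $d=2$, $p=\gamma=2$, $\zeta=k/2$, together with $\abs{Q^\square_{j,\xi}}\sim 2^{-2j}$ (a consequence of \link{supp}, \link{B1}, \link{B2} and the boundedness of the Jacobians of $\kappa_i$) and the chain-rule estimate $\abs{g\circ\kappa_i}_{W^k(L_2(Q^\square_{j,\xi}))}\lesssim\norm{g\sep W^k(L_2(\kappa_i(Q^\square_{j,\xi})))}\leq\norm{g\sep W^k(L_2(B_{j,\xi}))}$, yields exactly \link{cancel}; if the right-hand side is infinite there is nothing to prove.
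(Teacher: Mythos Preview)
Your argument follows the paper's proof closely in all three parts; the derivation of \link{cancel} via vanishing moments, Cauchy--Schwarz, Whitney, and the chain rule is essentially verbatim, and your explicit check that the patch index attached to $\xi\in\nabla_{j,\mathrm{int}}^{n,t}$ must equal $i(n,t)$ is a useful addition that the paper leaves implicit.

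The one genuine loose end is your lower bound in \link{est_Delta}. You correctly decompose $\partial\Gamma^{n,t}$ into a near-vertex piece contained in $\partial F_{i(n,t)}$ (handled by \link{B3}), a far piece inside $F_{i(n,t)}\setminus\Gamma^{n,t}$ (handled by \link{U2}), and a far piece outside $\overline{F_{i(n,t)}}$ --- but you never actually dispatch this last piece, which is precisely what condition \link{U1} is for. The paper also replaces your ``for $j$ large / absorb coarse levels'' hedge with a clean quantitative argument that works uniformly for all $j\geq j^*$: since $B_{j,\xi}\cap\U_{n,1}\neq\leer$ and $\diam B_{j,\xi}\leq c\,2^{-j}$, one has $\dist{B_{j,\xi}}{A}\geq\dist{\U_{n,1}}{A}-c\,2^{-j}\geq C_1-c\,2^{-j}>2^{-j}$ for any set $A$ covered by \link{U1} or \link{U2}, and this exceeds $2^{-j}$ once $j^*$ is fixed so that $(1+c)\,2^{-j^*}<C_1$.
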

\begin{proof}
\emph{Step 1.}
We start by proving the non-trivial inclusion in \link{split_int}.
To this end, let $\xi$ be an element of ${\nabla}_{j,\mathrm{int}} = \bigcup_{i=1}^{I} {\nabla}_{j,\mathrm{int}}^{F_i}$, i.e., $\xi \in {\nabla}_{j,\mathrm{int}}^{F_i}$ for some $i=i^*$. 
Due to \link{B1} and \link{U3} we have
\begin{equation*}
	\leer 
	\neq B_{j,\xi} 
	\subset F_{i^*} 
	\subset \partial\Omega 
	= \bigcup_{n=1}^N \U_{n,1} 
	= \bigcup_{n=1}^N \bigcup_{t=1}^{T_n} \left( \U_{n,1} \cap \overline{\Gamma^{n,t}} \right).
\end{equation*}
Therefore there needs to be at least one pair $(n^*,t^*)$ such that $\U_{n^*,1} \cap \overline{\Gamma^{n^*,t^*}} \cap B_{j,\xi} \neq \leer$. That is, $\xi \in {\nabla}_{j,\mathrm{int}}^{n^*,t^*} \subset \bigcup_{n=1}^N \bigcup_{t=1}^{T_n} {\nabla}_{j,\mathrm{int}}^{n,t}$ as claimed.

\emph{Step 2.}
Next we show \link{est_Delta}. 
Let $\xi \in {\nabla}^{n,t}_{j,\mathrm{int}}$ for some $j$, $n$, $t$ and let $i:=i(n,t)\in\{1,\ldots,I\}$ denote the corresponding patch number to $n$ and $t$; see \link{def_patch}. 
Then the upper bound in \link{est_Delta} directly follows from the fact
\begin{equation*}
	\dist{B_{j,\xi}}{\partial\Gamma^{n,t}} 
	\leq \dist{B_{j,\xi}}{\partial\Gamma^{n,t}\cap \overline{F_{i}}} \leq \diam \overline{F_{i}}.
\end{equation*}

To show the lower bound we note that the first term in
\begin{equation*}
	\dist{B_{j,\xi}}{\partial\Gamma^{n,t}} 
	= \min\!\left\{ \dist{B_{j,\xi}}{\partial\Gamma^{n,t} \cap \partial F_i}, \dist{B_{j,\xi}}{\partial\Gamma^{n,t} \setminus \partial F_i} \right\}
\end{equation*}
is clearly larger than $\dist{B_{j,\xi}}{\partial F_i}$ which is greater than $2^{-j}$ since $\xi\in{\nabla}_{j,\mathrm{int}}^{F_i}$; see \link{B3}.
Thus, it remains to show that the second term
\begin{equation}\label{lowerbound}
	\dist{B_{j,\xi}}{\partial\Gamma^{n,t} \setminus \partial F_i} 
	= \min\!\left\{ \dist{B_{j,\xi}}{\left[\partial\Gamma^{n,t} \setminus \partial F_i\right] \cap F_i}, \dist{B_{j,\xi}}{\partial\Gamma^{n,t} \setminus \overline{F_i}}   \right\}
\end{equation}
of the minimum above, is lower bounded by $2^{-j}$, too.
Observe that for every non-empty set $A$ we have
\begin{equation*}
	\dist{B_{j,\xi}\cap \U_{n,1}\cap \Gamma^{n,t}}{A}
	\leq \dist{B_{j,\xi}}{A} + \diam B_{j,\xi} 
	\leq \dist{B_{j,\xi}}{A} + c\, 2^{-j}
\end{equation*}
since $\diam B_{j,\xi} = 2 \rad B_{j,\xi} \sim \abs{\supp \tilde{\psi}_{j,\xi}^{\partial\Omega}}^{1/2} \sim 2^{-j}$,
due to \link{B2} and \link{supp}. This yields
\begin{equation*}
	\dist{B_{j,\xi}}{A} 
	\geq \dist{F_i\cap \U_{n,1} \cap \Gamma^{n,t}}{A} - c\, 2^{-j}
\end{equation*}
because of $B_{j,\xi} \subset F_i$.
Consequently, for the first term in \link{lowerbound} we conclude
\begin{eqnarray*}
	\dist{B_{j,\xi}}{\left[\partial\Gamma^{n,t} \setminus \partial F_i \right] \cap F_i} 
	&\geq & \dist{\U_{n,1} \cap \Gamma^{n,t}}{\left[\partial\Gamma^{n,t} \setminus \partial F_i\right] \cap F_i} - c\, 2^{-j} \\
	&\geq & \dist{\U_{n,1} \cap \Gamma^{n,t}}{F_i\setminus\Gamma^{n,t}} - c\, 2^{-j} \\
	&\geq & C_1 - c\, 2^{-j} \\
	&>& 2^{-j},
\end{eqnarray*}
where we used $\left[\partial\Gamma^{n,t} \setminus \partial F_i\right] \cap F_i \subset [F_i\setminus\Gamma^{n,t}]$ together with \link{U2} and the fact that $j\geq j^*$.
The second term in \link{lowerbound} can be estimated similarly using \link{U1} instead of \link{U2}:
\begin{eqnarray*}
	\dist{B_{j,\xi}}{\partial\Gamma^{n,t} \setminus \overline{F_i}}
	&\geq & \dist{\U_{n,1}}{\partial\Gamma^{n,t} \setminus \overline{F_i}} - c\, 2^{-j} \\
	&\geq & \dist{\U_{n,1}}{F_\ell} - c\, 2^{-j} \\
	&\geq & C_1 - c\, 2^{-j} \\
	&>& 2^{-j},
\end{eqnarray*}
where $\ell\in\{1,\ldots,I\}$ denotes some patch number with $\nu_n \notin \overline{F_\ell}$.

Hence, \link{lowerbound} is lower bounded by $2^{-j}$ which finally implies the first estimate in \link{est_Delta}.

\emph{Step 3.}
We are left with showing the cancellation property stated in \link{cancel}.
Using the same notation as in the previous step we see that for $\xi\in{\nabla}_{j,\mathrm{int}}^{n,t}$ the support of $\tilde{\psi}_{j,\xi}^{\partial\Omega} = \tilde{\psi}_{j,\xi}^{\square} \circ \kappa_i^{-1}$ is contained in $\kappa_i(Q^\square_{j,\xi}) \subset B_{j,\xi}$ which is a subset of the patch $F_i=F_{i(n,t)}$; see \link{B1}.
Therefore we have
\begin{equation*}
	\distr{g}{\tilde{\psi}_{j,\xi}^{\partial\Omega}} 
	= \distr{g\circ\kappa_i}{\tilde{\psi}_{j,\xi}^{\partial\Omega} \circ \kappa_i}_\square
	= \distr{g\circ\kappa_i - \P}{\tilde{\psi}_{j,\xi}^{\square}}_\square + \distr{\P}{\tilde{\psi}_{j,\xi}^{\partial\Omega} \circ \kappa_i}_\square
\end{equation*}
for all polynomials $\P$ defined on the cube $Q^\square_{j,\xi}$ in the unit square $[0,1]^2$.
If the total degree $\deg \P$ of $\P$ is strictly less than $\tilde{d}$, 
e.g., if $\P\in\Pi_{k-1}(Q^\square_{j,\xi})$ for some $k\in\{1,2,\ldots,\tilde{d}\}$, then the second term equals zero due to the vanishing moment property \link{vanish}; see \autoref{ass:basis}.
The inequality of Cauchy-Schwarz thus yields the estimate
\begin{eqnarray*}
	\abs{\distr{g}{\tilde{\psi}_{j,\xi}^{\partial\Omega}}}
	&\lesssim &\norm{ \left(g\circ\kappa_i-\P\right)\chi_{Q^\square_{j,\xi}} \sep L_2([0,1]^2) } \, \norm{\tilde{\psi}_{j,\xi}^{\square} \sep L_2([0,1]^2)} \\
	&\sim & \norm{ g\circ\kappa_i-\P \sep L_2(Q^\square_{j,\xi}) }
\end{eqnarray*}
since we have  $\norm{\tilde{\psi}_{j,\xi}^{\square} \sep L_2([0,1]^2)} \sim \norm{\tilde{\psi}_{j,\xi}^{\partial\Omega} \sep L_2(\partial\Omega)} \sim 1$ due to the norm equivalence \link{normeq} and the normalization \link{normalized}.
Taking the infimum w.r.t.\ $\P\in\Pi_{k-1}(Q^\square_{j,\xi})$ Whitney's estimate (see \autoref{prop:whitney}) in dimension $2$ with $p=\gamma=2$ gives
\begin{eqnarray*}
	\abs{\distr{g}{\tilde{\psi}_{j,\xi}^{\partial\Omega}}} 
	&\lesssim &\inf_{\P\in\Pi_{k-1}(Q^\square_{j,\xi})} \norm{ g\circ\kappa_i-\P \sep L_2(Q^\square_{j,\xi}) } 
	\lesssim \abs{Q^\square_{j,\xi}}^{k/2} \, \abs{g \circ \kappa_i}_{W^k\left(L_2(Q^\square_{j,\xi})\right)}.
\end{eqnarray*}
From \link{B1}, \link{B2} and \link{supp} it follows that $\abs{Q^\square_{j,\xi}} \sim \abs{\supp{\tilde{\psi}_{j,\xi}^{\partial\Omega}}} \sim 2^{-2j}$. 
Finally, since the parametrizations $\kappa_i$ are assumed to be sufficiently smooth, a simple transformation of measure argument shows the desired estimate for the Sobolev semi-norm:
\begin{equation*}
	\abs{g \circ \kappa_i}_{W^k\left(L_2(Q^\square_{j,\xi})\right)}
	\lesssim \norm{g \sep W^k\!\left(L_2(\kappa_i(Q^\square_{j,\xi}))\right)}
	\lesssim \norm{g \sep W^k\!\left(L_2(B_{j,\xi})\right)}.
\end{equation*}
This completes the proof.
\end{proof}

To show \autoref{prop:int} we split up the index sets ${\nabla}_{j,\mathrm{int}}^{n,t}$ which we defined in~\link{def_nabla_nt} again into disjoint subsets
\begin{equation}\label{split_int_a}
	{\nabla}_{j,\mathrm{int}}^{n,t}(a)
	:= \left\{ \xi \in {\nabla}_{j,\mathrm{int}}^{n,t} \sep a\, 2^{-j} < \Delta_{j,\xi}^{n,t} \leq (a+1)\, 2^{-j} \right\}, \qquad a\in\N.
\end{equation}
Note that due to \link{est_Delta} there are only finitely many $a\in\N$ such that ${\nabla}_{j,\mathrm{int}}^{n,t}(a)$ is not empty.
Furthermore, using standard arguments it is easy to see that
\begin{equation}\label{card_nabla}
	\# {\nabla}_{j,\mathrm{int}}^{n,t}(a) \lesssim 2^j
\end{equation}
with an implied constant that is independent of $j$, $n$, $t$, and $a$.

\begin{proof}[Proof (\autoref{prop:int})]
\emph{Step 1.}
Let $j\geq j^*$ and $\tau>0$ be fixed.
Then \link{resolution} together with \autoref{lem:finite} implies
\begin{equation*}
	\sum_{\xi \in {\nabla}_{j,\mathrm{int}}} \abs{\distr{u}{\tilde{\psi}^{\partial\Omega}_{j,\xi}}}^\tau 
	=\sum_{\xi \in {\nabla}_{j,\mathrm{int}}} \abs{\sum_{n=1}^N \distr{\varphi_n u}{\tilde{\psi}^{\partial\Omega}_{j,\xi}}}^\tau 
	\lesssim \sum_{n=1}^N \sum_{\xi \in {\nabla}_{j,\mathrm{int}}} \abs{\distr{\varphi_n u}{\tilde{\psi}^{\partial\Omega}_{j,\xi}}}^\tau. 
\end{equation*}
Recall that $\supp(\varphi_n u) \subset \U_{n,1}=\bigcup_{t=1}^{T_n}(\U_{n,1} \cap \overline{\Gamma^{n,t}})$ and $\supp \tilde{\psi}^{\partial\Omega}_{j,\xi} \subset B_{j,\xi}$ for $\xi\in {\nabla}_{j,\mathrm{int}}$; see \link{U3}, as well as \link{B1}. Hence, we can use the splittings given in \link{split_int} and \link{split_int_a} to conclude
\begin{equation}\label{sums}
	\sum_{\xi \in {\nabla}_{j,\mathrm{int}}} \abs{\distr{u}{\tilde{\psi}^{\partial\Omega}_{j,\xi}}}^\tau 
	\lesssim \sum_{n=1}^N \sum_{t=1}^{T_n} \sum_{a=1}^\infty \sum_{\xi \in {\nabla}_{j,\mathrm{int}}^{n,t}(a)} \abs{\distr{\varphi_n u}{\tilde{\psi}^{\partial\Omega}_{j,\xi}}}^\tau, \qquad j\geq j^*.
\end{equation}
In the next step we estimate all of these wavelet coefficients individually.

\emph{Step 2.}
According to the sums in \link{sums} let $j$, $n$, $t$, as well as $a$, and $\xi$ be fixed.
Using the cancellation property~\link{cancel} in \autoref{lem:split} for $g:=\varphi_n u$ and the fact that $B_{j,\xi}\subset \Gamma^{n,t}$ we obtain
\begin{equation*}
	\abs{\distr{\varphi_n u}{\tilde{\psi}^{\partial\Omega}_{j,\xi}}}
	\lesssim 2^{-kj} \norm{\varphi_n u \sep W^k\!\left(L_2(B_{j,\xi})\right)}
	\sim 2^{-kj} \norm{(\varphi_n u)_{n,t} \sep W^k\!\left(L_2(R_{n,t}^{-1}(B_{j,\xi}))\right)},
\end{equation*}
where $(\varphi_n u)_{n,t}$ refers to the facewise representation of $\varphi_n u$ defined by the change of coordinates~$R_{n,t}$; see \link{facewise_fkt}.
Next we see that for every $\rho \in [0,k]$ the Sobolev norm on $R_{n,t}^{-1}(B_{j,\xi}) \subset \tilde{\Gamma^{n,t}}$ can be estimated by its weighted analogue:
\begin{eqnarray*}
	\norm{(\varphi_n u)_{n,t} \sep W^k\!\left(L_2(R_{n,t}^{-1}(B_{j,\xi}))\right)}^2
	&=& \sum_{\substack{\alpha\in\N_0^2\\\abs{\alpha}\leq k}} \norm{D_y^\alpha(\varphi_n u)_{n,t} \sep L_2\!\left(R_{n,t}^{-1}(B_{j,\xi})\right) }^2 \\
	&\lesssim & \left( \Delta_{j,\xi}^{n,t} \right)^{2(\rho-k)} \sum_{\abs{\alpha}\leq k} \norm{\delta_{n,t}^{k-\rho}\cdot D_y^\alpha(\varphi_n u)_{n,t} \sep L_2\!\left(R_{n,t}^{-1}(B_{j,\xi})\right) }^2, 
\end{eqnarray*}
where we used that for all $y\in \tilde{\Gamma^{n,t}}$ which belongs to the ball $R_{n,t}^{-1}(B_{j,\xi})$ it is
\begin{eqnarray*}
	\delta_{n,t}(y)  
	&=& \min \! \left\{C_2, \dist{y}{\partial\tilde{\Gamma^{n,t}}} \right\} 
	\geq \min \!\left\{C_2, \dist{R_{n,t}^{-1}(B_{j,\xi})}{\partial\tilde{\Gamma^{n,t}}} \right\}\\
	&=& \min \!\left\{C_2, \dist{B_{j,\xi}}{\partial\Gamma^{n,t}} \right\}
	= \Delta_{j,\xi}^{n,t} \cdot \min \left\{C_2/ \Delta_{j,\xi}^{n,t}, 1 \right\}\\
	&\gtrsim & \Delta_{j,\xi}^{n,t}
\end{eqnarray*}
due to the uniform upper bound $\Delta_{j,\xi}^{n,t}\leq C_3$; see \link{est_Delta} in \autoref{lem:split}.
Thus, we finally conclude that
\begin{equation}\label{single_est}
	\abs{\distr{\varphi_n u}{\tilde{\psi}^{\partial\Omega}_{j,\xi}}}
	\lesssim 2^{-kj} \left( \Delta_{j,\xi}^{n,t} \right)^{\rho-k} \sum_{\abs{\alpha}\leq k} \norm{\delta_{n,t}^{k-\rho}\cdot D_y^\alpha(\varphi_n u)_{n,t} \sep L_2\!\left(R_{n,t}^{-1}(B_{j,\xi})\right) }
\end{equation}
holds true for all $j\geq j^*$, $n\in\{1,\ldots,N\}$, $t\in\{1,\ldots,T_n\}$, $a\in\N$, and $\xi\in{\nabla}_{j,\mathrm{int}}^{n,t}(a)$.

\emph{Step 3.}
Here we combine the splitting \link{sums} from Step 1 with the individual upper bounds~\link{single_est} from the previous step.
By definition of the index sets ${\nabla}_{j,\mathrm{int}}^{n,t}(a)$ -- see \link{split_int_a} -- we know that
$2^{-k j} \left( \Delta_{j,\xi}^{n,t} \right)^{\rho-k} < a^{\rho-k} \cdot 2^{- j  \rho}$.
Hence, for $\tau>0$ and $j\geq j^*$ we obtain
\begin{eqnarray*}
	&&\sum_{\xi \in {\nabla}_{j,\mathrm{int}}} \abs{\distr{u}{\tilde{\psi}^{\partial\Omega}_{j,\xi}}}^\tau \\
	&&\qquad \lesssim 2^{-j  \rho\tau} \sum_{n=1}^N \sum_{t=1}^{T_n} \left( \sum_{a=1}^\infty \sum_{\xi \in {\nabla}_{j,\mathrm{int}}^{n,t}(a)} \! a^{\tau(\rho-k)} \!	\left[ \sum_{\abs{\alpha}\leq k} \norm{\delta_{n,t}^{k-\rho}\cdot D_y^\alpha(\varphi_n u)_{n,t} \sep L_2\!\left(R_{n,t}^{-1}(B_{j,\xi})\right) } \right]^\tau \right).
\end{eqnarray*}
If we restrict ourselves to $1/2 < 1/\tau <\infty$, then the double sum within the brackets can be estimated by H\"older's inequality using the conjugate exponents $2/(2-\tau)$ and $2/\tau$:
\begin{equation}\label{hoelder}
	\big(\ldots\big) 
	\lesssim \left[ \sum_{a,\xi} a^{2\tau(\rho-k)/(2-\tau)} \right]^{(2-\tau)/2} 
	 \left[ \sum_{a,\xi} \sum_{\abs{\alpha}\leq k} \norm{\delta_{n,t}^{k-\rho}\cdot D_y^\alpha(\varphi_n u)_{n,t} \sep L_2\!\left(R_{n,t}^{-1}(B_{j,\xi})\right) }^2 \right]^{\tau/2}.
\end{equation}
The triple sum in \link{hoelder} can be bounded by
\begin{equation*}
	\sum_{\abs{\alpha}\leq k} \sum_{a=1}^\infty \sum_{\xi \in {\nabla}_{j,\mathrm{int}}^{n,t}(a)} \norm{\delta_{n,t}^{k-\rho}\cdot D_y^\alpha(\varphi_n u)_{n,t} \sep L_2\!\left(R_{n,t}^{-1}(B_{j,\xi})\right) }^2
	\lesssim \sum_{\abs{\alpha}\leq k} \norm{\delta_{n,t}^{k-\rho}\cdot D_y^\alpha(\varphi_n u)_{n,t} \sep L_2\!\left(\tilde{\Gamma^{n,t}}\right) }^2
\end{equation*}
because ${\nabla}_{j,\mathrm{int}}^{n,t} = \bigcup_{a=1}^\infty {\nabla}_{j,\mathrm{int}}^{n,t}(a)$ and every point $y\in\tilde{\Gamma^{n,t}}$ only appears in a uniformly bounded number of balls $R_{n,t}^{-1}(B_{j,\xi})$, $\xi\in {\nabla}_{j,\mathrm{int}}^{n,t}$. 
The reason is that, by the properties of the dual wavelets, the same is true for every point $x\in \Gamma^{n,t} \cap F_{i(n,t)}\subset \partial\Omega$ and the sets $\supp \tilde{\psi}^{\partial\Omega}_{j,\xi}$, $\xi\in {\nabla}_{j,\mathrm{int}}^{n,t}$; see \link{finite_overlap} in  \autoref{ass:basis}. Now \autoref{lem:weighted_Sobolev} applied to the function $f_n:=\varphi_n u$ (which is, by definition, supported in $\overline{\U_{n,1}} \subset \partial\Co_n$) allows us to extend the last inequality to
\begin{equation*}
	\sum_{a,\xi} \sum_{\abs{\alpha}\leq k} \norm{\delta_{n,t}^{k-\rho}\cdot D_y^\alpha(\varphi_n u)_{n,t} \sep L_2\!\left(R_{n,t}^{-1}(B_{j,\xi})\right) }^2
	\lesssim \norm{ \varphi_n u \sep X_\rho^k(\partial\Co_n)}^2.
\end{equation*}
	
So let us turn to the first double sum in \link{hoelder}.
From \link{card_nabla} it follows that
\begin{equation*}
	\sum_{a=1}^\infty \sum_{\xi \in {\nabla}_{j,\mathrm{int}}^{n,t}(a)} a^{2\tau(\rho-k)/(2-\tau)}
	\lesssim 2^j \sum_{a=1}^\infty a^{2\tau(\rho-k)/(2-\tau)}
	\lesssim 2^j
\end{equation*}
since the exponent of $a$ is strictly smaller than $-1$ if and only if $1/\tau < 1/2 + k - \rho$ which in turn directly follows from our assumptions.

Combining all the estimates from this step leads to
\begin{eqnarray*}
	\sum_{\xi \in {\nabla}_{j,\mathrm{int}}} \abs{\distr{u}{\tilde{\psi}^{\partial\Omega}_{j,\xi}}}^\tau
	&\lesssim & 2^{-j \rho\tau} \sum_{n=1}^N \sum_{t=1}^{T_n} 2^{j(2-\tau)/2} \norm{ \varphi_n u \sep X_\rho^k(\partial\Co_n)}^\tau \\
	&\lesssim & \left( 2^{1-\tau(\rho+1/2)} \right)^j \sum_{n=1}^N \norm{ \varphi_n u \sep X_\rho^k(\partial\Co_n)}^\tau \\
	&\sim & \left( 2^{1-\tau(\rho+1/2)} \right)^j \norm{ u \sep X_\rho^k(\partial\Omega)}^\tau
	\qquad \text{for all} \qquad j\geq j^*,
\end{eqnarray*}
where we used \link{def_X} from the definition of $X_\rho^k(\partial\Omega)$, as well as \autoref{lem:finite}, to obtain the last line.
Note that the latter estimate remains valid also for $\tau=2$. In this case we do not need H\"{o}lder's inequality since we may estimate $a^{\tau(\rho-k)}$ simply by one.

In order to prove the claim~\link{est_int}, we now only need to check that 
\begin{equation}\label{final_cond}
	2^{1-\tau(\rho+1/2)} < 1,
\end{equation}
since then the sum over $j\geq j^*$ will converge.
But the condition~\link{final_cond} is obviously fulfilled, because it is equivalent to our assumption $1/\tau< 1/2 + \rho$.
\end{proof}
\end{appendix}

\section*{Acknowledgements}
\addcontentsline{toc}{section}{Acknowledgments}
We like to thank H.~Harbrecht, B.~Scharf, as well as C.~Hartmann, and F.~Eckhardt for valuable discussions and remarks.
Moreover, we are grateful to the two anonymous referees for several comments which helped to improve the paper.

\addcontentsline{toc}{chapter}{References}
\bibliographystyle{is-abbrv}

\section*{}
\noindent Stephan~Dahlke and Markus~Weimar \\
Philipps-University Marburg \\
Faculty of Mathematics and Computer Science, Workgroup Numerics and Optimization \\
Hans-Meerwein-Stra{\ss}e, Lahnberge \\
35032 Marburg, Germany \\
\{dahlke, weimar\}@mathematik.uni-marburg.de \\[1ex]
\end{document}